\newtheorem{theorem}{Theorem}[section]
\newtheorem{proposition}[theorem]{Proposition}
\newtheorem{lemma}[theorem]{Lemma}
\newtheorem{corollary}[theorem]{Corollary}
\newtheorem{remark}[theorem]{Remark}
\newtheorem{example}[theorem]{Example}
\newtheorem{definition}[theorem]{Definition}
\renewcommand{\theequation}{\thesection.\arabic{equation}}
\let\pdfoutput=\undefined\fi
\chardef\@x10\chardef\@xv60
\def\tcitime{
\def\@time{%
  \@minute\time\@hour\@minute\divide\@hour\@xv
  \ifnum\@hour<\@x 0\fi\the\@hour:%
  \multiply\@hour\@xv\advance\@minute-\@hour
  \ifnum\@minute<\@x 0\fi\the\@minute
  }}%
\def\x@hyperref#1#2#3{%
   \catcode`\~ = 12
   \catcode`\$ = 12
   \catcode`\_ = 12
   \catcode`\# = 12
   \catcode`\& = 12
   \y@hyperref{#1}{#2}{#3}%
}
\def\y@hyperref#1#2#3#4{%
   #2\ref{#4}#3
   \catcode`\~ = 13
   \catcode`\$ = 3
   \catcode`\_ = 8
   \catcode`\# = 6
   \catcode`\& = 4
}
\def\QCTOpt[#1]#2{%
  \def\QCTOptB{#1}
  \def\QCTOptA{#2}
}
\def\QCTNOpt#1{%
  \def\QCTOptA{#1}
  \let\QCTOptB\empty
}
\def\Qct{%
  \@ifnextchar[{%
    \QCTOpt}{\QCTNOpt}
}
\def\QCBOpt[#1]#2{%
  \def\QCBOptB{#1}%
  \def\QCBOptA{#2}%
}
\def\QCBNOpt#1{%
  \def\QCBOptA{#1}%
  \let\QCBOptB\empty
}
\def\Qcb{%
  \@ifnextchar[{%
    \QCBOpt}{\QCBNOpt}%
}
\def\PrepCapArgs{%
  \ifx\QCBOptA\empty
    \ifx\QCTOptA\empty
      {}%
    \else
      \ifx\QCTOptB\empty
        {\QCTOptA}%
      \else
        [\QCTOptB]{\QCTOptA}%
      \fi
    \fi
  \else
    \ifx\QCBOptA\empty
      {}%
    \else
      \ifx\QCBOptB\empty
        {\QCBOptA}%
      \else
        [\QCBOptB]{\QCBOptA}%
      \fi
    \fi
  \fi
}
\def\GRAPHICSPS#1{%
 \ifcase\GRAPHICSTYPE
   \special{ps: #1}%
 \or
   \special{language "PS", include "#1"}%
 \fi
}%
\def\graffile#1#2#3#4{%
    \bgroup
	   \@inlabelfalse
       \leavevmode
       \@ifundefined{bbl@deactivate}{\def~{\string~}}{\activesoff}%
        \raise -#4 \BOXTHEFRAME{%
           \hbox to #2{\raise #3\hbox to #2{\null #1\hfil}}}%
    \egroup
}%
\def\draftbox#1#2#3#4{%
 \leavevmode\raise -#4 \hbox{%
  \frame{\rlap{\protect\tiny #1}\hbox to #2%
   {\vrule height#3 width\z@ depth\z@\hfil}%
  }%
 }%
}%
\let\nographics=\@msidraft
\newif\ifwasdraft
\def\GRAPHIC#1#2#3#4#5{%
   \ifnum\@msidraft=\@ne\draftbox{#2}{#3}{#4}{#5}%
   \else\graffile{#1}{#3}{#4}{#5}%
   \fi
}
\def\addtoLaTeXparams#1{%
    \edef\LaTeXparams{\LaTeXparams #1}}%
\newif\ifBoxFrame \BoxFramefalse
\newif\ifOverFrame \OverFramefalse
\newif\ifUnderFrame \UnderFramefalse
\def\BOXTHEFRAME#1{%
   \hbox{%
      \ifBoxFrame
         \frame{#1}%
      \else
         {#1}%
      \fi
   }%
}
\def\doFRAMEparams#1{\BoxFramefalse\OverFramefalse\UnderFramefalse\readFRAMEparams#1\end}%
\def\readFRAMEparams#1{%
 \ifx#1\end%
  \let\next=\relax
  \else
  \ifx#1i\dispkind=\z@\fi
  \ifx#1d\dispkind=\@ne\fi
  \ifx#1f\dispkind=\tw@\fi
  \ifx#1t\addtoLaTeXparams{t}\fi
  \ifx#1b\addtoLaTeXparams{b}\fi
  \ifx#1p\addtoLaTeXparams{p}\fi
  \ifx#1h\addtoLaTeXparams{h}\fi
  \ifx#1X\BoxFrametrue\fi
  \ifx#1O\OverFrametrue\fi
  \ifx#1U\UnderFrametrue\fi
  \ifx#1w
    \ifnum\@msidraft=1\wasdrafttrue\else\wasdraftfalse\fi
    \@msidraft=\@ne
  \fi
  \let\next=\readFRAMEparams
  \fi
 \next
 }%
\def\IFRAME#1#2#3#4#5#6{%
      \bgroup
      \let\QCTOptA\empty
      \let\QCTOptB\empty
      \let\QCBOptA\empty
      \let\QCBOptB\empty
      #6%
      \parindent=0pt
      \leftskip=0pt
      \rightskip=0pt
      \setbox0=\hbox{\QCBOptA}%
      \@tempdima=#1\relax
      \ifOverFrame
          \typeout{This is not implemented yet}%
          \show\HELP
      \else
         \ifdim\wd0>\@tempdima
            \advance\@tempdima by \@tempdima
            \ifdim\wd0 >\@tempdima
               \setbox1 =\vbox{%
                  \unskip\hbox to \@tempdima{\hfill\GRAPHIC{#5}{#4}{#1}{#2}{#3}\hfill}%
                  \unskip\hbox to \@tempdima{\parbox[b]{\@tempdima}{\QCBOptA}}%
               }%
               \wd1=\@tempdima
            \else
               \textwidth=\wd0
               \setbox1 =\vbox{%
                 \noindent\hbox to \wd0{\hfill\GRAPHIC{#5}{#4}{#1}{#2}{#3}\hfill}\\%
                 \noindent\hbox{\QCBOptA}%
               }%
               \wd1=\wd0
            \fi
         \else
            \ifdim\wd0>0pt
              \hsize=\@tempdima
              \setbox1=\vbox{%
                \unskip\GRAPHIC{#5}{#4}{#1}{#2}{0pt}%
                \break
                \unskip\hbox to \@tempdima{\hfill \QCBOptA\hfill}%
              }%
              \wd1=\@tempdima
           \else
              \hsize=\@tempdima
              \setbox1=\vbox{%
                \unskip\GRAPHIC{#5}{#4}{#1}{#2}{0pt}%
              }%
              \wd1=\@tempdima
           \fi
         \fi
         \@tempdimb=\ht1
         \advance\@tempdimb by -#2
         \advance\@tempdimb by #3
         \leavevmode
         \raise -\@tempdimb \hbox{\box1}%
      \fi
      \egroup%
}%
\def\DFRAME#1#2#3#4#5{%
  \vspace\topsep
  \hfil\break
  \bgroup
     \leftskip\@flushglue
	 \rightskip\@flushglue
	 \parindent\z@
	 \parfillskip\z@skip
     \let\QCTOptA\empty
     \let\QCTOptB\empty
     \let\QCBOptA\empty
     \let\QCBOptB\empty
	 \vbox\bgroup
        \ifOverFrame 
           #5\QCTOptA\par
        \fi
        \GRAPHIC{#4}{#3}{#1}{#2}{\z@}%
        \ifUnderFrame 
           \break#5\QCBOptA
        \fi
	 \egroup
  \egroup
  \vspace\topsep
  \break
}%
\def\FFRAME#1#2#3#4#5#6#7{%
  \@ifundefined{floatstyle}
    {
     \begin{figure}[#1]%
    }
    {
	 \ifx#1h
      \begin{figure}[H]%
	 \else
      \begin{figure}[#1]%
	 \fi
	}
  \let\QCTOptA\empty
  \let\QCTOptB\empty
  \let\QCBOptA\empty
  \let\QCBOptB\empty
  \ifOverFrame
    #4
    \ifx\QCTOptA\empty
    \else
      \ifx\QCTOptB\empty
        \caption{\QCTOptA}%
      \else
        \caption[\QCTOptB]{\QCTOptA}%
      \fi
    \fi
    \ifUnderFrame\else
      \label{#5}%
    \fi
  \else
    \UnderFrametrue%
  \fi
  \begin{center}\GRAPHIC{#7}{#6}{#2}{#3}{\z@}\end{center}%
  \ifUnderFrame
    #4
    \ifx\QCBOptA\empty
      \caption{}%
    \else
      \ifx\QCBOptB\empty
        \caption{\QCBOptA}%
      \else
        \caption[\QCBOptB]{\QCBOptA}%
      \fi
    \fi
    \label{#5}%
  \fi
  \end{figure}%
 }%
\def\makeactives{
  \catcode`\"=\active
  \catcode`\;=\active
  \catcode`\:=\active
  \catcode`\'=\active
  \catcode`\~=\active
}
   \gdef\activesoff{%
      \def"{\string"}%
      \def;{\string;}%
      \def:{\string:}%
      \def'{\string'}%
      \def~{\string~}%
    }
\def\FRAME#1#2#3#4#5#6#7#8{%
 \bgroup
 \ifnum\@msidraft=\@ne
   \wasdrafttrue
 \else
   \wasdraftfalse%
 \fi
 \def\LaTeXparams{}%
 \dispkind=\z@
 \def\LaTeXparams{}%
 \doFRAMEparams{#1}%
 \ifnum\dispkind=\z@\IFRAME{#2}{#3}{#4}{#7}{#8}{#5}\else
  \ifnum\dispkind=\@ne\DFRAME{#2}{#3}{#7}{#8}{#5}\else
   \ifnum\dispkind=\tw@
    \edef\@tempa{\noexpand\FFRAME{\LaTeXparams}}%
    \@tempa{#2}{#3}{#5}{#6}{#7}{#8}%
    \fi
   \fi
  \fi
  \ifwasdraft\@msidraft=1\else\@msidraft=0\fi{}%
  \egroup
 }%
\def\TEXUX#1{"texux"}
\long\def\QQQ#1#2{%
     \long\expandafter\def\csname#1\endcsname{#2}}%
\long\def\QQA#1#2{}%
\def\QTR#1#2{{\csname#1\endcsname {#2}}}%
\def\EXPAND#1[#2]#3{}%
\def\NOEXPAND#1[#2]#3{}%
\def\LaTeXparent#1{}%
\def\ChildStyles#1{}%
\def\ChildDefaults#1{}%
\def\QTagDef#1#2#3{}%
  \providecommand{\UNICODE}[2][]{\protect\rule{.1in}{.1in}}
  \providecommand{\U}[1]{\protect\rule{.1in}{.1in}}
\def\QQfnmark#1{\footnotemark}
 \def\abstract{%
  \if@twocolumn
   \section*{Abstract (Not appropriate in this style!)}%
   \else \small 
   \begin{center}{\bf Abstract\vspace{-.5em}\vspace{\z@}}\end{center}%
   \quotation 
   \fi
  }%
   \def\registered{\relax\ifmmode{}\r@gistered
                    \else$\m@th\r@gistered$\fi}%
 \def\r@gistered{^{\ooalign
  {\hfil\raise.07ex\hbox{$\scriptstyle\rm\text{R}$}\hfil\crcr
  \mathhexbox20D}}}}{}%
\newdimen\theight
\def\newfmtname{LaTeX2e}
  \DeclareOldFontCommand{\rm}{\normalfont\rmfamily}{\mathrm}
  \DeclareOldFontCommand{\sf}{\normalfont\sffamily}{\mathsf}
  \DeclareOldFontCommand{\tt}{\normalfont\ttfamily}{\mathtt}
  \DeclareOldFontCommand{\bf}{\normalfont\bfseries}{\mathbf}
  \DeclareOldFontCommand{\it}{\normalfont\itshape}{\mathit}
  \DeclareOldFontCommand{\sl}{\normalfont\slshape}{\@nomath\sl}
  \DeclareOldFontCommand{\sc}{\normalfont\scshape}{\@nomath\sc}
\def\alpha{{\Greekmath 010B}}%
\def\beta{{\Greekmath 010C}}%
\def\gamma{{\Greekmath 010D}}%
\def\delta{{\Greekmath 010E}}%
\def\epsilon{{\Greekmath 010F}}%
\def\zeta{{\Greekmath 0110}}%
\def\eta{{\Greekmath 0111}}%
\def\theta{{\Greekmath 0112}}%
\def\iota{{\Greekmath 0113}}%
\def\kappa{{\Greekmath 0114}}%
\def\lambda{{\Greekmath 0115}}%
\def\mu{{\Greekmath 0116}}%
\def\nu{{\Greekmath 0117}}%
\def\xi{{\Greekmath 0118}}%
\def\pi{{\Greekmath 0119}}%
\def\rho{{\Greekmath 011A}}%
\def\sigma{{\Greekmath 011B}}%
\def\tau{{\Greekmath 011C}}%
\def\upsilon{{\Greekmath 011D}}%
\def\phi{{\Greekmath 011E}}%
\def\chi{{\Greekmath 011F}}%
\def\psi{{\Greekmath 0120}}%
\def\omega{{\Greekmath 0121}}%
\def\varepsilon{{\Greekmath 0122}}%
\def\vartheta{{\Greekmath 0123}}%
\def\varpi{{\Greekmath 0124}}%
\def\varrho{{\Greekmath 0125}}%
\def\varsigma{{\Greekmath 0126}}%
\def\varphi{{\Greekmath 0127}}%
\def\nabla{{\Greekmath 0272}}
\def\FindBoldGroup{%
   {\setbox0=\hbox{$\mathbf{x\global\edef\theboldgroup{\the\mathgroup}}$}}%
}
\def\Greekmath#1#2#3#4{%
    \if@compatibility
        \ifnum\mathgroup=\symbold
           \mathchoice{\mbox{\boldmath$\displaystyle\mathchar"#1#2#3#4$}}%
                      {\mbox{\boldmath$\textstyle\mathchar"#1#2#3#4$}}%
                      {\mbox{\boldmath$\scriptstyle\mathchar"#1#2#3#4$}}%
                      {\mbox{\boldmath$\scriptscriptstyle\mathchar"#1#2#3#4$}}%
        \else
           \mathchar"#1#2#3#4%
        \fi 
    \else 
        \FindBoldGroup
        \ifnum\mathgroup=\theboldgroup 
           \mathchoice{\mbox{\boldmath$\displaystyle\mathchar"#1#2#3#4$}}%
                      {\mbox{\boldmath$\textstyle\mathchar"#1#2#3#4$}}%
                      {\mbox{\boldmath$\scriptstyle\mathchar"#1#2#3#4$}}%
                      {\mbox{\boldmath$\scriptscriptstyle\mathchar"#1#2#3#4$}}%
        \else
           \mathchar"#1#2#3#4%
        \fi     	    
	  \fi}
\newif\ifGreekBold  \GreekBoldfalse
\let\SAVEPBF=\pbf
\def\pbf{\GreekBoldtrue\SAVEPBF}%
  \newcounter{equationnumber}  
  \def\mathletters{%
     \addtocounter{equation}{1}
     \edef\@currentlabel{\theequation}%
     \setcounter{equationnumber}{\c@equation}
     \setcounter{equation}{0}%
     \edef\theequation{\@currentlabel\noexpand\alph{equation}}%
  }
    \def\BibTeX{{\rm B\kern-.05em{\sc i\kern-.025em b}\kern-.08em
                 T\kern-.1667em\lower.7ex\hbox{E}\kern-.125emX}}}{}%
\def\AmS{{\protect\usefont{OMS}{cmsy}{m}{n}%
                A\kern-.1667em\lower.5ex\hbox{M}\kern-.125emS}}}{}%
\def\@@eqncr{\let\@tempa\relax
    \ifcase\@eqcnt \def\@tempa{& & &}\or \def\@tempa{& &}%
      \else \def\@tempa{&}\fi
     \@tempa
     \if@eqnsw
        \iftag@
           \@taggnum
        \else
           \@eqnnum\stepcounter{equation}%
        \fi
     \fi
     \global\tag@false
     \global\@eqnswtrue
     \global\@eqcnt\z@\cr}
\def\TCItag{\@ifnextchar*{\@TCItagstar}{\@TCItag}}
\def\@TCItag#1{%
    \global\tag@true
    \global\def\@taggnum{(#1)}%
    \global\def\@currentlabel{#1}}
\def\@TCItagstar*#1{%
    \global\tag@true
    \global\def\@taggnum{#1}%
    \global\def\@currentlabel{#1}}
\def\ExitTCILatex{\makeatother }
\if@compatibility\message{amsmath already loaded}\fi\aftergroup\ExitTCILatex}
\if@compatibility\message{amstex already loaded}\fi\aftergroup\ExitTCILatex}
\if@compatibility\message{amsgen already loaded}\fi\aftergroup\ExitTCILatex}
\let\DOTSI\relax
\def\RIfM@{\relax\ifmmode}%
\def\FN@{\futurelet\next}%
\def\iint{\DOTSI\intno@\tw@\FN@\ints@}%
\def\iiint{\DOTSI\intno@\thr@@\FN@\ints@}%
\def\iiiint{\DOTSI\intno@4 \FN@\ints@}%
\def\idotsint{\DOTSI\intno@\z@\FN@\ints@}%
\def\ints@{\findlimits@\ints@@}%
\newif\iflimtoken@
\newif\iflimits@
\def\findlimits@{\limtoken@true\ifx\next\limits\limits@true
 \else\ifx\next\nolimits\limits@false\else
 \limtoken@false\ifx\ilimits@\nolimits\limits@false\else
 \ifinner\limits@false\else\limits@true\fi\fi\fi\fi}%
\def\multint@{\int\ifnum\intno@=\z@\intdots@                          
 \else\intkern@\fi                                                    
 \ifnum\intno@>\tw@\int\intkern@\fi                                   
 \ifnum\intno@>\thr@@\int\intkern@\fi                                 
 \int}
\def\multintlimits@{\intop\ifnum\intno@=\z@\intdots@\else\intkern@\fi
 \ifnum\intno@>\tw@\intop\intkern@\fi
 \ifnum\intno@>\thr@@\intop\intkern@\fi\intop}%
\def\intic@{%
    \mathchoice{\hskip.5em}{\hskip.4em}{\hskip.4em}{\hskip.4em}}%
\def\negintic@{\mathchoice
 {\hskip-.5em}{\hskip-.4em}{\hskip-.4em}{\hskip-.4em}}%
\def\ints@@{\iflimtoken@                                              
 \def\ints@@@{\iflimits@\negintic@
   \mathop{\intic@\multintlimits@}\limits                             
  \else\multint@\nolimits\fi                                          
  \eat@}
 \else                                                                
 \def\ints@@@{\iflimits@\negintic@
  \mathop{\intic@\multintlimits@}\limits\else
  \multint@\nolimits\fi}\fi\ints@@@}%
\def\intkern@{\mathchoice{\!\!\!}{\!\!}{\!\!}{\!\!}}%
\def\plaincdots@{\mathinner{\cdotp\cdotp\cdotp}}%
\def\intdots@{\mathchoice{\plaincdots@}%
 {{\cdotp}\mkern1.5mu{\cdotp}\mkern1.5mu{\cdotp}}%
 {{\cdotp}\mkern1mu{\cdotp}\mkern1mu{\cdotp}}%
 {{\cdotp}\mkern1mu{\cdotp}\mkern1mu{\cdotp}}}%
\def\RIfM@{\relax\protect\ifmmode}
\def\text{\RIfM@\expandafter\text@\else\expandafter\mbox\fi}
\let\nfss@text\text
\def\text@#1{\mathchoice
   {\textdef@\displaystyle\f@size{#1}}%
   {\textdef@\textstyle\tf@size{\firstchoice@false #1}}%
   {\textdef@\textstyle\sf@size{\firstchoice@false #1}}%
   {\textdef@\textstyle \ssf@size{\firstchoice@false #1}}%
   \glb@settings}
\def\textdef@#1#2#3{\hbox{{%
                    \everymath{#1}%
                    \let\f@size#2\selectfont
                    #3}}}
\newif\iffirstchoice@
\def\Let@{\relax\iffalse{\fi\let\\=\cr\iffalse}\fi}%
\def\vspace@{\def\vspace##1{\crcr\noalign{\vskip##1\relax}}}%
\def\multilimits@{\bgroup\vspace@\Let@
 \baselineskip\fontdimen10 \scriptfont\tw@
 \advance\baselineskip\fontdimen12 \scriptfont\tw@
 \lineskip\thr@@\fontdimen8 \scriptfont\thr@@
 \lineskiplimit\lineskip
 \vbox\bgroup\ialign\bgroup\hfil$\m@th\scriptstyle{##}$\hfil\crcr}%
\def\Sb{_\multilimits@}%
\def\endSb{\crcr\egroup\egroup\egroup}%
\def\Sp{^\multilimits@}%
\newdimen\ex@
\def\rightarrowfill@#1{$#1\m@th\mathord-\mkern-6mu\cleaders
 \hbox{$#1\mkern-2mu\mathord-\mkern-2mu$}\hfill
 \mkern-6mu\mathord\rightarrow$}%
\def\leftarrowfill@#1{$#1\m@th\mathord\leftarrow\mkern-6mu\cleaders
 \hbox{$#1\mkern-2mu\mathord-\mkern-2mu$}\hfill\mkern-6mu\mathord-$}%
\def\leftrightarrowfill@#1{$#1\m@th\mathord\leftarrow
\mkern-6mu\cleaders
 \hbox{$#1\mkern-2mu\mathord-\mkern-2mu$}\hfill
 \mkern-6mu\mathord\rightarrow$}%
\def\overrightarrow{\mathpalette\overrightarrow@}%
\def\overrightarrow@#1#2{\vbox{\ialign{##\crcr\rightarrowfill@#1\crcr
 \noalign{\kern-\ex@\nointerlineskip}$\m@th\hfil#1#2\hfil$\crcr}}}%
\def\overleftarrow{\mathpalette\overleftarrow@}%
\def\overleftarrow@#1#2{\vbox{\ialign{##\crcr\leftarrowfill@#1\crcr
 \noalign{\kern-\ex@\nointerlineskip}$\m@th\hfil#1#2\hfil$\crcr}}}%
\def\overleftrightarrow{\mathpalette\overleftrightarrow@}%
\def\overleftrightarrow@#1#2{\vbox{\ialign{##\crcr
   \leftrightarrowfill@#1\crcr
 \noalign{\kern-\ex@\nointerlineskip}$\m@th\hfil#1#2\hfil$\crcr}}}%
\def\underrightarrow{\mathpalette\underrightarrow@}%
\def\underrightarrow@#1#2{\vtop{\ialign{##\crcr$\m@th\hfil#1#2\hfil
  $\crcr\noalign{\nointerlineskip}\rightarrowfill@#1\crcr}}}%
\def\underleftarrow{\mathpalette\underleftarrow@}%
\def\underleftarrow@#1#2{\vtop{\ialign{##\crcr$\m@th\hfil#1#2\hfil
  $\crcr\noalign{\nointerlineskip}\leftarrowfill@#1\crcr}}}%
\def\underleftrightarrow{\mathpalette\underleftrightarrow@}%
\def\underleftrightarrow@#1#2{\vtop{\ialign{##\crcr$\m@th
  \hfil#1#2\hfil$\crcr
 \noalign{\nointerlineskip}\leftrightarrowfill@#1\crcr}}}%
\def\qopnamewl@#1{\mathop{\operator@font#1}\nlimits@}
\let\nlimits@\displaylimits
\def\setboxz@h{\setbox\z@\hbox}
\def\varlim@#1#2{\mathop{\vtop{\ialign{##\crcr
 \hfil$#1\m@th\operator@font lim$\hfil\crcr
 \noalign{\nointerlineskip}#2#1\crcr
 \noalign{\nointerlineskip\kern-\ex@}\crcr}}}}
 \def\rightarrowfill@#1{\m@th\setboxz@h{$#1-$}\ht\z@\z@
  $#1\copy\z@\mkern-6mu\cleaders
  \hbox{$#1\mkern-2mu\box\z@\mkern-2mu$}\hfill
  \mkern-6mu\mathord\rightarrow$}
\def\leftarrowfill@#1{\m@th\setboxz@h{$#1-$}\ht\z@\z@
  $#1\mathord\leftarrow\mkern-6mu\cleaders
  \hbox{$#1\mkern-2mu\copy\z@\mkern-2mu$}\hfill
  \mkern-6mu\box\z@$}
\def\projlim{\qopnamewl@{proj\,lim}}
\def\injlim{\qopnamewl@{inj\,lim}}
\def\varinjlim{\mathpalette\varlim@\rightarrowfill@}
\def\varprojlim{\mathpalette\varlim@\leftarrowfill@}
\def\varliminf{\mathpalette\varliminf@{}}
\def\varliminf@#1{\mathop{\underline{\vrule\@depth.2\ex@\@width\z@
   \hbox{$#1\m@th\operator@font lim$}}}}
\def\varlimsup{\mathpalette\varlimsup@{}}
\def\varlimsup@#1{\mathop{\overline
  {\hbox{$#1\m@th\operator@font lim$}}}}
\def\align{\@verbatim \frenchspacing\@vobeyspaces \@alignverbatim
You are using the "align" environment in a style in which it is not defined.}
\let\csname endalign*\endcsname =\endtrivlist
\def\alignat{\@verbatim \frenchspacing\@vobeyspaces \@alignatverbatim
You are using the "alignat" environment in a style in which it is not defined.}
\let\csname endalignat*\endcsname =\endtrivlist
\def\xalignat{\@verbatim \frenchspacing\@vobeyspaces \@xalignatverbatim
You are using the "xalignat" environment in a style in which it is not defined.}
\let\csname endxalignat*\endcsname =\endtrivlist
\def\gather{\@verbatim \frenchspacing\@vobeyspaces \@gatherverbatim
You are using the "gather" environment in a style in which it is not defined.}
\let\csname endgather*\endcsname =\endtrivlist
\def\multiline{\@verbatim \frenchspacing\@vobeyspaces \@multilineverbatim
You are using the "multiline" environment in a style in which it is not defined.}
\let\csname endmultiline*\endcsname =\endtrivlist
\def\arrax{\@verbatim \frenchspacing\@vobeyspaces \@arraxverbatim
You are using a type of "array" construct that is only allowed in AmS-LaTeX.}
\def\tabulax{\@verbatim \frenchspacing\@vobeyspaces \@tabulaxverbatim
You are using a type of "tabular" construct that is only allowed in AmS-LaTeX.}
\let\csname endarrax*\endcsname =\endtrivlist
\let\csname endtabulax*\endcsname =\endtrivlist
 \def\endequation{%
     \ifmmode\ifinner 
      \iftag@
        \addtocounter{equation}{-1} 
        $\hfil
           \displaywidth\linewidth\@taggnum\egroup \endtrivlist
        \global\tag@false
        \global\@ignoretrue   
      \else
        $\hfil
           \displaywidth\linewidth\@eqnnum\egroup \endtrivlist
        \global\tag@false
        \global\@ignoretrue 
      \fi
     \else   
      \iftag@
        \addtocounter{equation}{-1} 
        \eqno \hbox{\@taggnum}
        \global\tag@false%
        $$\global\@ignoretrue
      \else
        \eqno \hbox{\@eqnnum}
        $$\global\@ignoretrue
      \fi
     \fi\fi
 } 
 \newif\iftag@ \tag@false
 \def\TCItag{\@ifnextchar*{\@TCItagstar}{\@TCItag}}
 \def\@TCItag#1{%
     \global\tag@true
     \global\def\@taggnum{(#1)}%
     \global\def\@currentlabel{#1}}
 \def\@TCItagstar*#1{%
     \global\tag@true
     \global\def\@taggnum{#1}%
     \global\def\@currentlabel{#1}}
     \def\tag{\@ifnextchar*{\@tagstar}{\@tag}}
     \def\@tag#1{%
         \global\tag@true
         \global\def\@taggnum{(#1)}}
     \def\@tagstar*#1{%
         \global\tag@true
         \global\def\@taggnum{#1}}
\def\qed{\hfill$\square$\par}
\def\enddoc{

\begin{document}

\title{BGD domains in p.c.f. self-similar sets II: spectral asymptotics for Laplacians}

\pagestyle{plain}

\author {Qingsong Gu}
\address{School of Mathematics\\ Nanjing University, Nanjing, 210093, China}
\email{qingsonggu@nju.edu.cn}
{\author {Hua Qiu}}
\address {School of Mathematics\\ Nanjing University, Nanjing, 210093, China}
\email {huaqiu@nju.edu.cn}

\subjclass[2010]{Primary 28A80; Secondary 31E05}
\keywords{Dirichlet forms, BGD domains, fractal Laplacians, spectral asymptotics, p.c.f. self-similar sets}
\thanks {Qingsong Gu was supported by the National Natural Science Foundation of
China (Grant No.12101303 and 12171354). Hua Qiu was supported by the National Natural Science Foundation of China, grant 12471087.}

\maketitle
\begin{abstract}
Let $K$ be a p.c.f. self-similar set equipped with a strongly recurrent Dirichlet form. Under a  homogeneity assumption, for an open set $\Omega\subset K$ whose boundary $\partial \Omega$ is a graph-directed self-similar set, we prove that the eigenvalue counting function $\rho^\Omega(x)$  of the Laplacian with Dirichlet or Neumann boundary conditions (Neumann only for connected $\Omega$) has an explicit second term as $x\to +\infty$, beyond the dominant Weyl term. If $\partial\Omega$ has a strong iterated structure, we establish that
\begin{equation*}
\rho^\Omega(x)=\nu(\Omega)G\Big(\frac{\log x}2\Big)x^{\frac{d_S}2}+\kappa(\partial\Omega)G_1\Big(\frac{\log x}2\Big)x^{\frac d2}+o\big(x^{\frac d2}\big),
\end{equation*}
where $G$ and $G_1$ are bounded periodic functions, $\nu$ and $\kappa$ are certain reference measures, and $d_S$ and $d$ are dimension-related parameters.

\end{abstract}

\maketitle
\section{ Introduction}\label{sec 1}
\setcounter{equation}{0}\setcounter{theorem}{0}

Let $\Omega$ be a non-empty bounded open set in $\mathbb R^n$, with boundary $\partial\Omega$. Consider the following eigenvalue problem
\begin{equation*}
\begin{cases}
-\Delta u=\lambda u  &\text{ in }\Omega,\\
\hspace{0.53cm} u=0 &\text{ on }\partial\Omega,
\end{cases}
\end{equation*}
where $\Delta=\sum_{k=1}^n\partial^2/\partial x_k^2$ denotes the Laplace operator with Dirichlet boundary conditions. The value $\lambda$ is said to be an eigenvalue of the problem if there exists a non-zero function $u\in H_0^1(\Omega)$ satisfying $-\Delta u=\lambda u$ in the distributional sense. By classical theory, the spectrum of the above problem is discrete, with the only limit point $+\infty$, and each eigenvalue is a positive, real number with finite multiplicity. We can list them in an increasing order
\begin{equation*}
0<\lambda_1\leq\lambda_2\leq\cdots\leq\lambda_n\leq\cdots\rightarrow+\infty,
\end{equation*}
where each eigenvalue is counted according to its multiplicity.
For $x\in\mathbb R$, denote
\begin{equation*}
\rho(x)=\#\{\lambda\leq x:\ \lambda \text{ is a positive eigenvalue of }-\Delta\}
\end{equation*}
 as the  {\it eigenvalue counting function}.

 The study of the asymptotic behavior of $\rho(x)$ as $x\rightarrow+\infty$ has a long and fruitful history. In 1977, extending Weyl's famous formula for $\rho(x)$, M\'{e}tivier \cite{Me} proved that
 \begin{equation*}
 \rho(x)=(2\pi)^{-n}\kappa_n|\Omega|_nx^{n/2}+o(x^{n/2}),
 \end{equation*}
where $\kappa_n$ is the volume of the unit ball in $\mathbb R^n$, $|\Omega|_n$ denotes the $n$-dimensional Lebesgue measure of $\Omega$. It is natural to wonder whether the formula has a second term.
 The problem is closely related to Kac's famous problem ``{\it Can one hear the shape of a drum?}". Mathematically, can one ``hear" the geometric information of the boundary, for example, the dimension (possibly the boundary is not smooth) and volume, through the spectrum of the Laplacian?

The classical Weyl-Berry's conjecture states that if $\Omega\subset \mathbb R^n$ has a ``fractal" boundary $\partial\Omega$ with Hausdorff dimension $H\in[n-1,n]$, the eigenvalue counting function $\rho(x)$ has the following asymptotic formula as $x\rightarrow+\infty$,
\begin{equation}\label{WBformulaclassical}
\rho(x)=(2\pi)^{-n}\kappa_n|\Omega|_nx^{n/2}-c_{n,H}|\partial\Omega|_{H}x^{H/2}+o(x^{H/2}),
\end{equation}
where $|\partial\Omega|_H$ is the $H$-dimensional Hausdorff measure of $\partial\Omega$ and $c_{n,H}$ is a positive constant depending on $n$ and $H$.

Indeed, as suggested by Brossard and Carmona \cite{BC}, the second term needs to be modified by replacing the Hausdorff dimension $H$ of the boundary $\partial\Omega$ with its Minkowski dimension. This was verified in 1991 by Lapidus in \cite{La}, who obtained an implicit estimate for the second term.
Additionally, for the one-dimensional case, this conjecture was later completely solved by Lapidus and Pomerance \cite{LP}.

In contrast with Kac's problem, {\it what if the drum has a fractal membrane and a fractal boundary?} The theory of Laplacians on fractals is closely related to that of Dirichlet forms and Brownian motions. Since the 1980s, it has emerged as an independent research field. On self-similar sets, the pioneering works include the independent constructions of Brownian motions on the Sierpi\'nski gasket by Goldstein \cite{Go}, Kusuoka \cite{Kus}, and Barlow and Perkins \cite{BP}. The method features the analysis on a sequence of compatible graphs and is extended to post-critically finite (p.c.f.) fractals \cite{K1,K2} by Kigami. The construction of Brownian motions can also be realized on the Sierpi\'nski carpet, a typical non-p.c.f. self-similar set, by Barlow and Bass \cite{BB89}. See \cite{L, Sa, KZ, Metz, BB92, BBKT,CQ} and books \cite{B,K,Str} for further studies of Dirichlet forms on fractals.

Before formulating the eigenvalue problem of fractal Laplacians, let us first make some notational conventions. Let $K$ be a self-similar set, and let $(\mathcal E,\mathcal F)$ be a local regular Dirichlet form on $L^2(K,\mu)$, where $\mu$ is a Radon measure on $K$ with full support.  Denote $\Delta_{\mu}$ as the infinitesimal generator of $(\mathcal E,\mathcal F)$, which is the Laplacian on $K$ associated with $\mu$.
Let $\Omega$ be a non-empty open set in $K$. Denote $(\mathcal E_{\Omega},\mathcal F_{\Omega})$ as the Dirichlet form on $L^2(\Omega,\mu|_{\Omega})$ induced by $(\mathcal E,\mathcal F)$. Write $\mathcal F_{\Omega,0}$ as the closure of $\mathcal F_{\Omega}\cap C_0(\Omega)$ in $\mathcal F_{\Omega}$, where $C_0(\Omega)$ is the space of continuous functions compactly supported in $\Omega$.

Consider the eigenvalue problem of $-\Delta_\mu$ with Dirichlet boundary condition and Neumann boundary condition on $\Omega$:
\begin{equation*}
\begin{cases}
\mathcal E_{\Omega}(u,v)=\lambda\int_{\Omega}uvd\mu, & \text{for any } v\in \mathcal F_{\Omega,0},\\
u\in \mathcal F_{\Omega,0},&
\end{cases}
\end{equation*}
and
\begin{equation*}
\begin{cases}
\mathcal E_{\Omega}(u,v)=\lambda\int_{\Omega}uvd\mu, &\text{for any } v\in \mathcal F_{\Omega},\\
u\in \mathcal F_{\Omega}.
\end{cases}
\end{equation*}
By standard theory, when $-\Delta_\mu$ has compact resolvent, the eigenvalue problem has discrete spectrum with the only limit point $+\infty$, and each eigenvalue is a non-negative real number with finite multiplicity. In what follows, we denote $\rho_D^{\Omega}(x)$ and $\rho_N^{\Omega}(x)$ as the eigenvalue counting functions associated with the Dirichlet and Neumann boundary conditions, respectively.

The eigenvalue problem on fractals has significant difference from that on Euclidean spaces. Let us focus on the situation that $(K,\{F_i\}_{i=1}^N,V_0)$ is a p.c.f. self-similar set
equipped with a self-similar, strongly recurrent Dirichlet form $(\mathcal E,\mathcal F)$ and a self-similar measure $\mu$, where $\{F_i\}_{i=1}^N$ with $N\geq2$ is the iterated function system of $K$ and $V_0$ is the boundary of $K$ consisting of finite many points. Let $(r_1,\ldots,r_N)\in(0,1)^N$ be the energy renormalizing factors of $(\mathcal E,\mathcal F)$, and $(\mu_1,\ldots,\mu_N)\in(0,1)^N$ satisfying $\sum_{i=1}^N\mu_i=1$ be the weights of $\mu$.

First, we look at  a special case that $\Omega=K\setminus V_0$.
In 1993, Kigami and Lapidus \cite{KL} proved that the eigenvalue counting function $\rho^{K\setminus V_0}_*(x)$, where $*$ stands for $D$ (Dirichlet) or $N$ (Neumann), satisfies the estimate that, as $x\rightarrow+\infty$,
 \begin{equation}\label{formulaKandL}
\rho_*^{K\setminus V_0}(x)=\begin{cases}
G\Big(\frac{\log x}{2}\Big)x^{\frac{d_S}{2}}+o\big(x^{\frac{d_S}{2}}\big), & \text{ if $\sum_{i=1}^N\mathbb Z\log\sqrt{r_i\mu_i}$ is a discrete subgroup of $\mathbb R$},\\
Cx^{\frac{d_S}{2}}+o\big(x^{\frac{d_S}{2}}\big), & \text{ otherwise,}
\end{cases}
\end{equation}
 where $G$ is a positive periodic function bounded from above and below away from $0$, with period $T$ being the generator of the additive group $\sum_{i=1}^N\mathbb Z\log\sqrt{r_i\mu_i}$, $C$ is some positive constant, and $d_S$ is the unique solution of
$
\sum_{i=1}^N ({r_i\mu_i})^{d_S/2}=1,
$
called the spectral exponent. The first case is usually termed as the lattice case, and the second case is the non-lattice case.
It is known that if $K$ satisfies the open set condition and we choose $\mu_i=c_i^\alpha$ with $c_i$ being the contraction ratio of $F_i$ and $\alpha$ being the Hausdorff dimension of $K$, and further suppose that $r_i=c_i^{\theta}$ with some $\theta>0$, then $d_S=\frac{2\alpha}{\beta}$, where $\beta=\alpha+\theta$ is called the walk dimension of the Brownian motion on $K$. See \cite{Kum} for nested fractals, \cite{HK} for p.c.f. self-similar sets, and \cite[Section 15]{K12} for general case.

For the lattice case, Kigami later \cite{K98} refined the above formula to obtain a sharp remainder estimate, see Theorem \ref{KigamiJFA} for details. In particular, when each $r_i\mu_i$ equals a common constant, it holds that
\begin{equation*}
\rho_*^{K\setminus V_0}(x)=G\Big(\frac{\log x}{2}\Big)x^{\frac{d_S}{2}}+O(1).
\end{equation*}
This can be interpreted as a second term estimate since it is consistent with the fact that the boundary $V_0$ has dimension zero. Further on the Sierpi{\'n}ski gasket, Strichartz \cite{Str12} showed that the remainder term vanishes for almost all large $x$,
using a spectral decimation method originally developed by Shima and Fukushima \cite{Shi,FS}.

Let us consider another typical open set $\Omega=SG\setminus L$ in the Sierpi\'nski gasket ($SG$) which is generated by removing its bottom line $L$. This domain was initially considered by Owen and Strichartz in \cite{OS} to study the boundary value problem for harmonic functions. Recently, Kigami and Takahashi \cite{KT} obtained the explicit expression of the jump kernel of the trace of the Brownian motion on $SG$ to $L$. In 2019, through a spectral decimation method, the second author \cite{Q} characterized the spectrum on $SG\setminus L$ as consisting of three types of eigenvalues and provided sharp estimates for their associated counting functions. As suggested by numerical experiments, he conjectured that there exists a non-constant bounded $\frac{\log 5}{2}$-periodic function $G_1$, such that as $x\rightarrow+\infty$,
\begin{equation*}
\rho^{SG\setminus L}_D(x)=G\Big(\frac{\log x}2\Big)x^{\frac{\log 3}{\log 5}}+G_1\Big(\frac{\log x}2\Big)x^{\frac{\log 2}{\log 5}}+o\big(x^{\frac{\log 2}{\log 5}}\big),
\end{equation*}
an explicit second term estimate.

\begin{figure}[h]
	\includegraphics[width=6cm]{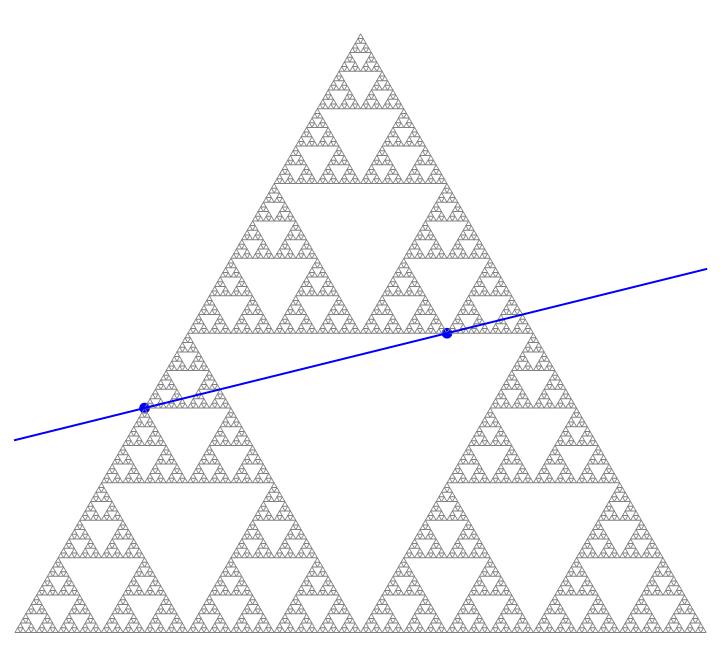}
	\begin{picture}(0,0)
	\end{picture}	
	\caption{domains in $SG$} \label{figureSGcut}
\end{figure}

Recently, for general p.c.f. self-similar sets, the authors of this paper \cite{GQ} introduced the boundary graph-directed condition (BGD) to consider the boundary value problems for harmonic functions on connected open subsets whose geometric boundary are graph-directed self-similar sets. The BGD domains form a broad class of open subsets in p.c.f. self-similar sets. For example, the domains in the Sierpi\'nski gasket generated by cutting $SG$ with a line that passes through two distinct junction points are all BGD domains; see Figure \ref{figureSGcut}. Another typical example is a family of domains in Lindstr{\o}m snowflake whose boundaries are Koch curves; see Figure \ref{figureSF}.

\begin{figure}[h]
	\includegraphics[width=4.5cm]{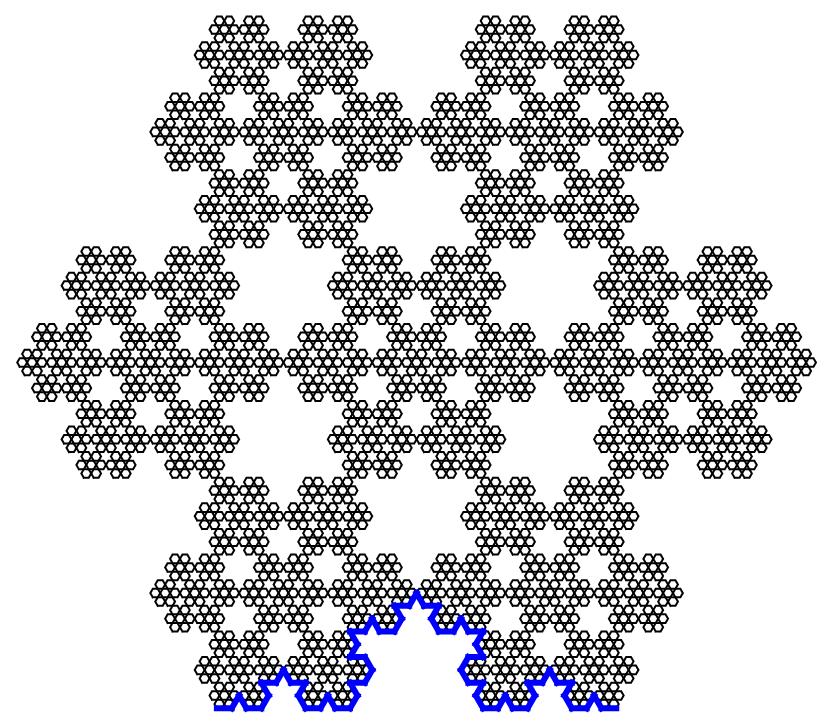}
			\includegraphics[width=4.5cm]{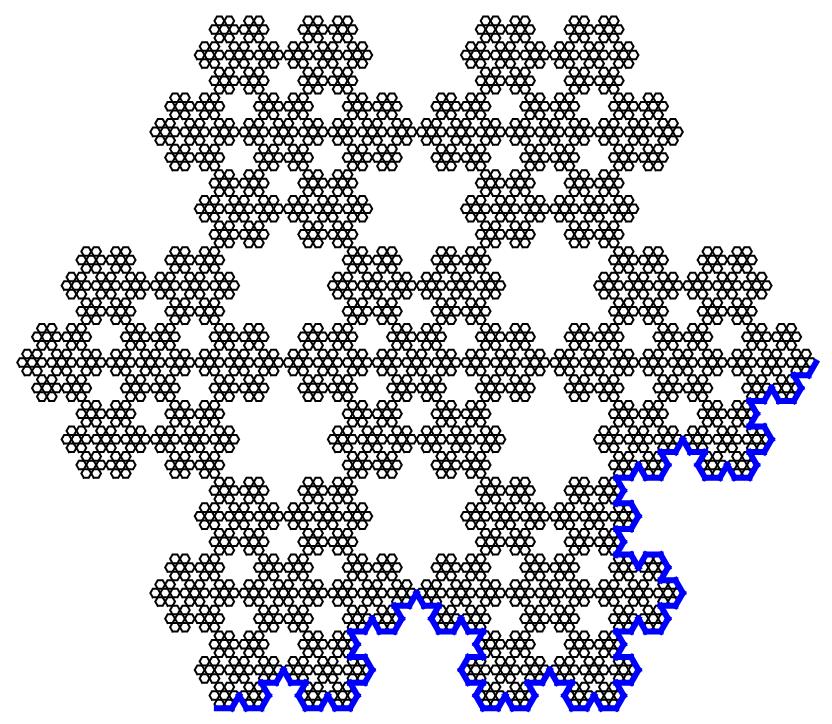}
				\includegraphics[width=4.5cm]{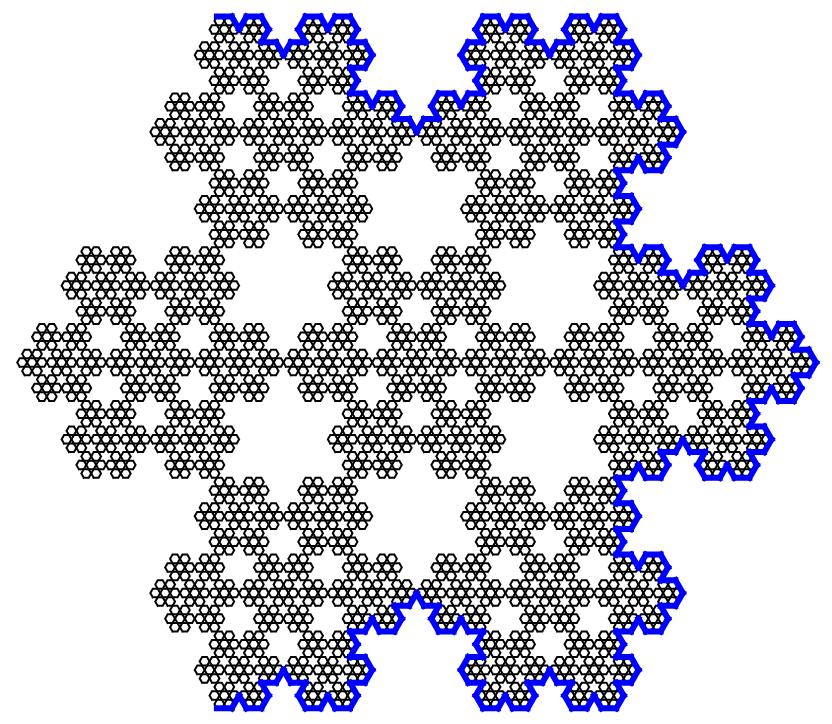}
	\caption{Domains in Lindstr\o m snowflake} \label{figureSF}
\end{figure}

In this paper, for BGD domains in p.c.f. self-similar sets (fractal open sets with fractal boundaries), under certain homogeneity conditions on the Laplacians, we obtain an explicit second term estimate of the asymptotic formula of the eigenvalue counting functions, which can be viewed as a counterpart of \eqref{WBformulaclassical} in the Euclidean case. When the directed graph of BGD domains is strongly connected, we have the following sharp estimates (see Theorem \ref{thm-irreducible}): for $*=D$ or $N$, as $x\rightarrow+\infty$,
\begin{equation*}
\rho^{\Omega}_*(x)=\nu(\Omega)G\Big(\frac{\log x}{2}\Big)x^{\frac{d_S}{2}}+\kappa(\partial\Omega)G_*\Big(\frac{\log x}{2}\Big) x^{\frac{d}2}+o\big(x^{\frac{d}2}\big),
\end{equation*}
where $G$, $G_*$ are bounded periodic functions ($G$, the same function in \eqref{formulaKandL}, depends on $K$, and $G_*$ depends on the shape of $\Omega$), $\nu$ and $\kappa$ are certain reference measures on $\Omega$ and $\partial\Omega$, respectively, reflecting the homogenous structure of the Laplacian under consideration. When the directed graph of BGD domains is not strongly connected, we also have a sharp estimate of the second term, but it might be multiplied by $(\log x)^m$ with some integer $m\geq0$; see Theorem \ref{reducible}.

When $c_i=c$, $\mu_i=\frac1N$, $r_i=r$ for all $i$, in the above asymptotic formula, it is direct to check that
\begin{equation*}
\frac{d_S}2=\frac\alpha\beta,\quad \frac{d}2=\frac{\alpha_{\partial\Omega}}\beta,
\end{equation*}
where $\alpha=\frac{\log N}{-\log c}$ is the Hausdorff dimension of $\Omega$, $\beta=\frac{\log (Nr^{-1})}{-\log c}$ is the walk dimension, and  $\alpha_{\partial\Omega}$ is the Hausdorff (Minkowski) dimension of $\partial\Omega$.
 This is consistent with the Euclidean case, where the walk dimension is always $2$.

At last, we mention that the partition function $Z(t)=\sum_{k=1}^\infty e^{-\lambda_kt}$, the Laplace transform of $\rho(x)$, has better analytic properties than $\rho(x)$ itself. The asymptotic behavior of $Z(t)$ as $t\rightarrow0+$ can be derived from the asymptotic behavior of $\rho(x)$ as $x\rightarrow+\infty$. However, the inverse process is not straightforward.

For the classical Sierpi{\'n}ski carpet ($SC$), Kajino \cite{Ka1}\cite{Ka2} (based on a result of Hambly \cite{Ha}) provided a sharp asymptotic formula for $Z(t)$ of the Laplacian on $SC$:  for $*=D$ or $N$, as $t\rightarrow0+$,
\begin{equation*}
Z(t)=t^{-\frac{\alpha}{\beta}}G_{*,0}(-\log t)+t^{-\frac1{\beta}}G_{*,1}(-\log t)+G_{*,2}(-\log t)+O\left(\exp\left(-ct^{-\frac1{\beta-1}}\right)\right),
\end{equation*}
where $\alpha=\frac{\log8}{\log3}$ is the Hausdorff dimension of $SC$, $\beta$ is the walk dimension of $SC$, and $G_{*,i}$, $i=0,1,2$, are continuous $(\beta\log3)$-periodic functions. Here $c>0$ is  a constant. Kajino's method can also handle the Dirichlet partition function for typical open sets in a p.c.f. self-similar set with good symmetric properties. Thanks to his results, from which we know that the periodic function $G_D$ in the second term of $\rho_D^{SG\setminus L}(x)$ is non-zero. For more details, see Section \ref{subsec6.1}.

\medskip

The paper is organized as follows. In Section \ref{sec 2}, we review some basic concepts related to p.c.f. self-similar sets and Dirichlet forms, as well as key results on spectral asymptotics by Kigami and Lapidus. In Section \ref{sec3}, we discuss the boundary graph-directed condition for open subsets in p.c.f. self-similar sets. In Section \ref{sec 4}, we prove our main result regarding the asymptotic behavior of eigenvalue counting functions in the irreducible case. In Section \ref{sec 5}, we extend this result to the general case. In Section \ref{sec 6}, we provide several examples to illustrate our findings, covering both irreducible and reducible cases. Finally, Section \ref{sec 7} serves as an appendix, presenting some vector-valued renewal theorems that are used in proving Theorems \ref{thm-irreducible} and \ref{reducible}.

\bigskip

\section{Preliminaries}\label{sec 2}
\setcounter{equation}{0}\setcounter{theorem}{0}

\medskip
We begin with some notations about post-critically finite (p.c.f. for short) self-similar sets introduced by Kigami \cite{K}. Let $N\geq 2$ be an integer and $\{F_i\}_{i=1}^N$ be an iterated function system (IFS), i.e. a finite set of contractions, on a complete metric space $(X,d)$. Let $K$ be the associated self-similar set, which is the unique non-empty compact set of $X$ that satisfies the equation
\begin{equation*}
K=\bigcup_{i=1}^N F_i(K).
\end{equation*}
We proceed to define the symbolic space. Let $\Sigma=\{1,\ldots,N\}$ be the {\it alphabets}, and $\Sigma^n$ be the set of {\it words} of length $n$, with $\Sigma^0=\{\emptyset\}$ indicating the set containing only the empty word. The set $\Sigma^\infty$ represents the collection of {\it infinite words} $\omega=\omega_1\omega_2\cdots$. For a word $\omega=\omega_1\cdots\omega_n\in\Sigma^n$, we define its length as $|\omega|=n$, and write $F_{\omega}=F_{\omega_1}\circ \cdots\circ F_{\omega_n}$ the composition of functions (with $F_\emptyset=\rm{Id}$, the identity function). We refer to $F_\omega(K)$ as an {\it $n$-cell}  and denote it as $K_\omega$. Let $\pi: \Sigma^\infty\rightarrow K$ be defined by
\begin{equation*}
\{x\}=\{\pi(\omega)\}=\bigcap_{n\geq1}F_{[\omega]_n}(K),
\end{equation*}
the symbolic representation of $x\in K$ by the word $\omega$, where $[\omega]_n=\omega_1\cdots\omega_n$.

In accordance with \cite{K}, we define the {\it critical set} $\mathcal C$ and {\it post-critical set} $\mathcal P$ for $K$ as follows:
\begin{equation*}
\mathcal C=\pi^{-1}\left(\bigcup_{1\leq i<j\leq N}\left(F_i(K)\cap F_j(K)\right)\right),\quad \mathcal P=\bigcup_{m\geq1}\sigma^m(\mathcal C),
\end{equation*}
where  $\sigma:\ \Sigma^\infty\rightarrow\Sigma^\infty$ denotes the left shift operator, defined by $\sigma(\omega_1\omega_2\cdots)=\omega_2\omega_3\cdots$. If $\mathcal P$ is finite, we call $\{F_i\}_{i=1}^N$ a {\it post-critically finite (p.c.f.)} IFS, and $K$ a {\it p.c.f. self-similar set}. The boundary of $K$ is defined by $V_0=\pi(\mathcal P)$. We also inductively denote
\begin{equation*}
V_n=\bigcup_{i\in\Sigma}F_i(V_{n-1}),\quad V_*=\bigcup_{n=0}^\infty V_n.
\end{equation*}
It is known that the metric space $(K,d)$ has a fundamental neighborhood system $\{K_{n,x}:\ n\geq0,x\in K\}$, where each $K_{n,x}=\bigcup\limits_{\omega\in \Sigma^n: x\in F_\omega(K)}F_{\omega}(K)$, see \cite[Proposition 1.3.6]{K}.
We always assume that $(K,d)$ is connected so that $V_0$ is non-empty. It is clear that $\{V_n\}_{n\geq0}$ forms an increasing sequence of sets, and $K$ is the closure of $V_*$.

\medskip

Our basic assumption on a p.c.f. self-similar set $K$ is the existence of a {\it regular harmonic structure} $(D,{\bf r})$. Denote $Q=\# V_0$. Let ${\bf r}=(r_1,\ldots, r_N)\in (0,\infty)^N$ and $D=(D_{pq})_{p,q\in V_0}$ be a $Q\times Q$ real symmetric matrix satisfying:

\medskip

1. for $u\in\ell (V_0)$, $Du=0$ if and only if $u$ is a constant function;

2. $D_{pq}\geq0$ for any $p,q\in V_0$ with $p\neq q$.

\medskip

For a function $u\in\ell (V_0)$, we define the energy functional $E_0[u]$ as:
\begin{equation*}
E_0[u]=-\sum_{p,q\in V_0}D_{p,q}u(p)u(q),
\end{equation*}
 and for $n\geq1$, we recursively define the energy functional $E_n$ as:
 \begin{equation*}
 E_n[u]=\sum_{\omega\in \Sigma^n}\frac{1}{r_\omega}E_0[u\circ F_{\omega}|_{V_0}] \quad  \text{for } u\in\ell(V_n),
 \end{equation*}
 where $r_\omega=r_{\omega_1}\cdots r_{\omega_n}$ for $\omega=\omega_1\cdots\omega_n$ (with $r_{\emptyset}=1$).

We say that $(D,{\bf r})$ is a harmonic structure on $(K,\{F_i\}_{i=1}^N,V_0)$ if it satisfies the following compatibility condition:
 \begin{equation*}
 E_0[u]=\inf_{v\in \ell(V_1),v|_{V_0}=u} E_1[v]\quad \text{ for } u\in\ell(V_0).
 \end{equation*}
 Furthermore, if ${\bf r}\in (0,1)^N$, we refer to the harmonic structure as {\it regular}.
Under this condition, $ E_n[u]$ forms an increasing sequence with respect to $n$. Consequently, for $u\in C(K)$, the space of all continuous functions on $K$, we can define its energy $\mathcal E[u]$ as:
 \begin{equation*}
 \mathcal E[u]=\lim_{n\rightarrow+\infty} E_n[u|_{V_n}].
 \end{equation*}
Let $\mathcal F=\{u\in C(K):\ \mathcal E[u]<\infty \}$, and define
 \begin{equation*}
 \mathcal E(u,v)=\frac14\big(\mathcal E[u+v]-\mathcal E[u-v]\big) \quad\text{for $u,v\in \mathcal F$.}
 \end{equation*}
Note that by the standard theory \cite{K}, $\mathcal F$ is dense in $C(K)$.

This defines a strongly recurrent self-similar {\it resistance form} $(\mathcal E,\mathcal F)$ satisfying
\begin{equation}\label{ess}
\mathcal E(u,v)=\sum_{i=1}^N\frac{1}{r_i}\mathcal E(u\circ F_i,v\circ F_i)\quad \text{for $u,v\in \mathcal F$},
\end{equation}
where $0<r_i<1$ for $i=1,\ldots,N$ are termed \textit{energy renormalizing factors}.  By iterating \eqref{ess}, it follows that for any $n\geq1$,
\begin{equation}\label{essn}
\mathcal E(u,v)=\sum_{|\omega|=n}\frac{1}{r_\omega}\mathcal E(u\circ F_\omega,v\circ F_\omega)\quad \text{for $u,v\in \mathcal F$}.
\end{equation}
For $u\in\mathcal F$ and $\omega\in\Sigma^n$ for some $n\geq0$, we refer to $\frac1{r_\omega}\mathcal E[u\circ F_{\omega}]$ as the energy of $u$ on the cell $K_{\omega}$.

To define a Laplace operator through the resistance form, we require a measure on the fractal. Let us assume that $\mu$ is a Radon measure with full support on $K$. Then $\mathcal F$ is dense in $L^2(K, \mu)$ and is complete with respect to the $\mathcal E_1^{1/2}$-norm, thus making $(\mathcal E, \mathcal F)$ a {\it Dirichlet form} on $L^2(K, \mu)$, where
\begin{equation*}
\mathcal E_1 [u] = \mathcal E [u] + \int_K u^2d\mu \quad \text{for $u\in \mathcal F$} .
\end{equation*}

\medskip

We set the measure $\mu$ to be a {\it self-similar measure} on $K$. Specifically, we assume that $\mu_1,\mu_2,\cdots,\mu_N$ are positive numbers satisfying $\sum_{i=1}^N\mu_i=1$, then we require $\mu$ to be a probability measure on $K$ such that for any Borel set $A\subset K$,
\begin{equation*}
\mu(A)=\sum_{i=1}^N\mu_i\mu\circ F_i^{-1}(A).
\end{equation*}
Note that $\mu(K_\omega)=\mu_{\omega_1}\mu_{\omega_2}\cdots\mu_{\omega_n}$ for any $\omega=\omega_1\omega_2\cdots\omega_n\in \Sigma^n$.

\medskip

The Laplace operator $\Delta_D$ (or $\Delta_N$) with Dirichlet (or Neumann) boundary condition is defined through the Dirichlet form $(\mathcal E, \mathcal F)$ using weak formulations.
We define $\mathcal F_0=\{f\in\mathcal F:\ f|_{V_0}=0\}$, and write
\begin{equation*}
-\Delta_D u=f \quad\text{for } f\in C(K),
\end{equation*}
if $u\in \mathcal F_0$ satisfies
\begin{equation*}
\mathcal E(u,v)=\int_K fv d\mu \quad \text{for any } v\in\mathcal F_0;
\end{equation*}
write
\begin{equation*}
-\Delta_N u=f \quad\text{for }f\in C(K),
\end{equation*}
if $u\in\mathcal F$ satisfies
\begin{equation*}
\mathcal E(u,v)=\int_K fvd\mu\quad \text{for any }v\in\mathcal F.
\end{equation*}

For $*=D$ or $N$, it is known that the operator $-\Delta_*$ is self-adjoint and possesses a compact resolvent. Say a number $\lambda$ is a  {\it $*$-eigenvalue} of $-\Delta_*$, if there exists a non-zero function $u$ such that
\begin{equation*}
-\Delta_* u=\lambda u.
\end{equation*}
Call the function $u$ a $*$-eigenfunction of $-\Delta_*$ corresponding to $\lambda$.

By a standard theory, the eigenvalues of $-\Delta_*$ are non-negative real numbers, have finite multiplicity, and have $+\infty$ as their sole limit point. Consequently, we can define the associated {\it eigenvalue counting function} on $[0,+\infty)$ as:
\begin{equation}\label{eqecfK}
\rho_*(x)=\#\{k:\ \text{$k\leq x$ and $k$ is a positive eigenvalue of $-\Delta_*$}\},
\end{equation}
where each eigenvalue is counted according to its multiplicity.

Denote $\gamma_i=\sqrt{r_i\mu_i}$ for $i=1,\ldots,N$. Kigami and Lapidus proved:
\begin{theorem}[Kigami-Lapidus \cite{KL}]\label{thm-KL}Let $d_S>0$ be the number such that $\sum_{i=1}^N\gamma_i^{d_S}=1$.

1. Non-lattice case: if the additive group $\sum_{i=1}^N\mathbb Z\log\gamma_i$ is dense in $\mathbb R$, then there exists a constant $C>0$ such that as ${x\rightarrow+\infty}$,
\begin{equation*}
\rho_*(x)=Cx^{\frac{d_S}{2}}+o\big(x^{\frac{d_S}{2}}\big)\quad \text{for $*=D$ or $N$}.
\end{equation*}

2. Lattice case: if the additive group $\sum_{i=1}^N\mathbb Z\log\gamma_i$ is discrete, let $T>0$ be its generator, then there exists a positive (bounded away from $0$), bounded, right-continuous, $T$-periodic function $G$ such that as ${x\rightarrow+\infty}$,
\begin{equation*}
\rho_*(x)=G\Big(\frac{\log x}{2}\Big)x^{\frac{d_S}{2}}+o\big(x^{\frac{d_S}{2}}\big)\quad \text{for $*=D$ or $N$}.
\end{equation*}
\end{theorem}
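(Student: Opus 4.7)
I would reduce the spectral asymptotics to a one-dimensional renewal equation by combining the self-similarity \eqref{ess} of $(\mathcal{E},\mathcal{F})$ with that of $\mu$, performing Dirichlet--Neumann bracketing on the cell decomposition $K=\bigcup_{i=1}^N K_i$, and then invoking the classical key renewal theorem of Feller, using its lattice/non-lattice dichotomy.

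The first step is an exact scaling identity. From \eqref{essn} together with $\mu\circ F_i^{-1}=\mu_i^{-1}\mu$ on $K_i$, the map $u\mapsto u\circ F_i$ realizes a bijection between $\ast$-eigenfunctions of $-\Delta_\mu$ on the cell $K_i$ and $\ast$-eigenfunctions on $K$ that multiplies the eigenvalue by $r_i\mu_i=\gamma_i^{2}$; thus for $\ast\in\{D,N\}$,
\[
\rho_\ast^{K_i}(x)=\rho_\ast^{K}\!\bigl(\gamma_i^{2}x\bigr),\qquad x>0.
\]
The second step is Dirichlet--Neumann bracketing applied at the finite interior vertex set $V_1\setminus V_0$, which gives
\[
\sum_{i=1}^{N}\rho_D^{K_i}(x)\ \le\ \rho_D^{K}(x)\ \le\ \rho_N^{K}(x)\ \le\ \sum_{i=1}^{N}\rho_N^{K_i}(x),\qquad \rho_N^{K}(x)-\rho_D^{K}(x)\le c_0,
\]
with $c_0$ depending only on $\#V_1$. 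Combining the two steps yields, for $\ast\in\{D,N\}$,
\[
\rho_\ast^{K}(x)=\sum_{i=1}^{N}\rho_\ast^{K}\!\bigl(\gamma_i^{2}x\bigr)+O(1),\qquad x\to\infty.
\]

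Next I substitute $x=e^{2t}$, set $a_i:=-\log\gamma_i>0$, $p_i:=\gamma_i^{d_S}$ (so $\sum_i p_i=1$ by the defining equation of $d_S$), and introduce $g_\ast(t):=e^{-d_S t}\rho_\ast^{K}(e^{2t})$. The previous recursion becomes
\[
g_\ast(t)=\sum_{i=1}^{N}p_i\,g_\ast(t-a_i)+h_\ast(t),
\]
with $h_\ast(t)=O(e^{-d_S t})$, in particular directly Riemann integrable on $[0,\infty)$. A uniform a priori bound $g_\ast(t)=O(1)$ for $t\ge 0$ comes from standard Weyl-type volume estimates ($\rho_\ast^K(x)\le Cx^{d_S/2}$), which follow from the Nash inequality associated to the self-similar resistance form paired with $\mu$. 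This is precisely the (defective) renewal equation for the probability distribution $F=\sum_i p_i\delta_{a_i}$. The key renewal theorem then yields the dichotomy: in the non-lattice case, where $\{a_i\}_{i=1}^N$ generates a dense subgroup of $\mathbb{R}$, $g_\ast(t)\to C$ with $C=\bigl(\sum_i p_i a_i\bigr)^{-1}\!\int_0^\infty h_\ast(s)\,ds$; in the lattice case, where $\sum_i\mathbb{Z}\log\gamma_i=T\mathbb{Z}$, the arithmetic version of the renewal theorem gives convergence of $g_\ast(t)$ along each residue class modulo $T$ to a right-continuous $T$-periodic function $G$.

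The main obstacle will be upgrading the bracketing \emph{inequality} to an honest renewal \emph{equation} with a small enough error to deliver the sharp $o(x^{d_S/2})$ remainder rather than merely $O(x^{d_S/2})$. The remedy is to iterate the bracketing to scale $n$, let $n\to\infty$ with $t$ at a tuned rate, and verify that the accumulated junction defects, once renormalized, yield a contribution that vanishes uniformly. A secondary difficulty, specific to the lattice case, is establishing that the periodic limit $G$ is \emph{strictly positive} and bounded above: boundedness is the a priori estimate $g_\ast=O(1)$, while positivity follows by exhibiting, at each generation, a definite number of genuinely new eigenfunctions of controlled eigenvalue, constructed via harmonic extension of nonzero configurations on $V_0$ and the min-max principle.
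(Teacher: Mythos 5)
This statement is cited from Kigami--Lapidus \cite{KL} and the paper supplies no proof of its own, but your outline --- the cell-scaling identity $\rho_\ast^{K_i}(x)=\rho_\ast^K(\gamma_i^2x)$, Dirichlet--Neumann bracketing at $V_1$, renormalization to a renewal equation for the distribution $\sum_i\gamma_i^{d_S}\delta_{-\log\gamma_i}$, and the key renewal theorem in its lattice/non-lattice forms --- is exactly the argument of \cite{KL}, and it is also the strategy this paper vectorizes in Section \ref{sec 4} (Lemma \ref{lemmaKL}, Corollary \ref{thmcor}, Lemma \ref{lemma4.3} and the Appendix). One remark: the ``main obstacle'' you flag is not really one, since an $O(1)$ defect in the counting-function recursion becomes an $O(e^{-d_St})$, directly Riemann integrable forcing term after renormalization, so the renewal theorem already delivers the $o\big(x^{d_S/2}\big)$ remainder with no tuned iteration of the bracketing; the genuinely delicate points are the a priori bound $\rho_\ast(x)=O\big(x^{d_S/2}\big)$ and the strict positivity of $G$, which you do address.
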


In a subsequent paper, Kigami refined the remainder term in the lattice case as follows.
\begin{theorem}[Kigami \cite{K98}]\label{KigamiJFA}Under the assumptions of the lattice case in Theorem \ref{thm-KL}, define $Q(z)=(1-\sum_{i=1}^N(z/p)^{m_i})/(1-z)$, where $p=e^{d_ST}$ and $m_i=-\frac{\log\gamma_i}T$ for $i=1,\ldots,N$. Let $\beta=\min\{|z|:\ Q(z)=0\}$ and $m=\max\{ \text{ multiplicity of $Q(z)=0$ at $w$}:\ |w|=\beta,Q(w)=0\}$. Then for $*=D$ or $N$, as $x\rightarrow+\infty$,
\begin{equation}\label{Kigamilattice}
\rho_*(x)=G\Big(\frac{\log x}{2}\Big)x^{\frac{d_S}{2}}+\begin{cases}
O\Big(x^{\frac{d_S}{2}-\frac{\log\beta}{2T}}(\log x)^{m-1}\Big)\qquad &\text{if }p>\beta,\\
O\left((\log x)^{m}\right)\qquad &\text{if }p=\beta,\\
O(1)\qquad &\text{if }p<\beta.\\
\end{cases}
\end{equation}
\end{theorem}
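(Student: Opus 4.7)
The plan is to upgrade the renewal-theoretic argument of Theorem \ref{thm-KL} by a sharper, complex-analytic analysis of the associated generating function, exploiting the fact that in the lattice case the renewal equation degenerates to an honest linear recurrence with a polynomial characteristic equation. First I would use the self-similarity identity \eqref{essn} together with self-similarity of $\mu$ to see that if $u$ is a $*$-eigenfunction of $-\Delta_\mu$ on $K_\omega$ with eigenvalue $\lambda$, then $u\circ F_\omega$ is a $*$-eigenfunction on $K$ with eigenvalue $\gamma_\omega^2\lambda$, whence $\rho_*^{K_\omega}(x)=\rho_*^K(\gamma_\omega^2x)$. A Dirichlet--Neumann bracketing (splitting an eigenfunction on $K$ into its restrictions to $K_1,\dots,K_N$ and noting that the error is controlled by the finite number of interior vertices $V_1\setminus V_0$) produces the approximate renewal equation
\[
\rho_*^K(x)=\sum_{i=1}^N\rho_*^K(\gamma_i^2x)+b_*(x),\qquad b_*(x)=O(1).
\]

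Next I would exploit the lattice hypothesis to convert this functional equation into a linear recurrence. Parameterizing $x=c\,e^{2nT}$ for $c\in[1,e^{2T})$ and $n\in\mathbb Z_{\ge 0}$, and writing $m_i=-\log\gamma_i/T\in\mathbb Z_{>0}$, we get $a_n=\sum_i a_{n-m_i}+b_n$ with bounded $b_n$. After the normalization $\tilde a_n=a_n\,p^{-n}$ (using $\gamma_i^{d_S}=p^{-m_i}$), the recurrence becomes $\tilde a_n=\sum_i\gamma_i^{d_S}\tilde a_{n-m_i}+\tilde b_n$ with $\tilde b_n=O(p^{-n})$. The generating function satisfies
\[
\tilde A(z)\Bigl(1-\sum_{i=1}^N(z/p)^{m_i}\Bigr)=\tilde B(z)+P(z),
\]
and the characteristic factor $1-\sum_i(z/p)^{m_i}$ splits as $(1-z)Q(z)$ with $Q$ the polynomial in the statement. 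Thus $\tilde A$ is meromorphic with a simple pole at $z=1$ (supplying the periodic principal term $G(\log x/2)\,x^{d_S/2}$) and further poles precisely at the zeros of $Q$; the generating function $\tilde B$ is analytic for $|z|<p$.

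Extracting $\tilde a_n$ via contour integration, $\tilde a_n=\frac{1}{2\pi i}\oint_{|z|=\epsilon}\tilde A(z)\,z^{-n-1}dz$, one moves the contour outward collecting residues. The residue at $z=1$ yields $G$. The three cases in \eqref{Kigamilattice} then correspond to the position of $\beta=\min\{|z|:Q(z)=0\}$ relative to $p$, which is the natural obstruction to pushing the contour further. If $\beta<p$, the contour can be moved to $|z|=R$ with $\beta<R<p$; collecting the residues on $|z|=\beta$ (each of order at most $m$) gives the correction $O(\beta^{-n}n^{m-1})$, which after the normalization and translation $n=\log x/(2T)$ becomes $O(x^{d_S/2-\log\beta/(2T)}(\log x)^{m-1})$. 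If $\beta>p$, no poles of $Q$ obstruct moving the contour to $|z|=p^-$, so the only contribution beyond $G$ comes from the bounded forcing $\tilde B$, which produces an $O(1)$ remainder.

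The main obstacle is the borderline case $\beta=p$, where the poles of $(1-z)Q(z)$ on $|z|=p$ coincide with the natural boundary of $\tilde B$; here one must track the exact singularity type of $\tilde B$ on $|z|=p$ and combine it with a pole of order $m$ of $1/Q$, producing an extra logarithmic factor and hence the remainder $O((\log x)^m)$. Secondary technical points are (i) interpolating from the discrete sequence $x=c\,e^{2nT}$ back to continuous $x>0$, which requires uniformity in the parameter $c\in[1,e^{2T})$ and yields the periodicity of $G$, and (ii) treating both $*=D$ and $*=N$ simultaneously by first establishing the estimate for the natural ``bracketing'' counting functions and then comparing via the finite-dimensional gap estimate $|\rho_D^K(x)-\rho_N^K(x)|=O(1)$.
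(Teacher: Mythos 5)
First, a structural point: the paper does not prove Theorem \ref{KigamiJFA} at all --- it is imported verbatim from Kigami \cite{K98} --- so there is no in-paper proof to compare against, and I can only assess your sketch on its own terms.

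Your skeleton is the right one: bracketing gives $\rho_*(x)=\sum_{i=1}^N\rho_*(\gamma_i^2x)+O(1)$, the lattice hypothesis turns this into the recurrence $a_n=\sum_i a_{n-m_i}+b_n$ with bounded forcing, and the error is governed by the zeros of $1-\sum_i(z/p)^{m_i}=(1-z)Q(z)$; your treatment of the case $p>\beta$ is correct. The genuine gap is in the case $p<\beta$ (equivalently $\beta>p$), which happens to be the only case the present paper uses. With $\tilde A(z)=(\tilde B(z)+P_0(z))/((1-z)Q(z))$ and $\tilde B$ analytic only in $|z|<p$, pushing the contour to $|z|=R$ with $1<R<p$ gives $\tilde a_n=C(c)+O(R^{-n})$, hence $a_n=C(c)p^n+O((p/R)^n)$; since $R<p$ this error \emph{grows}, so the crude contour bound only yields $O(x^{\varepsilon})$ for every $\varepsilon>0$, not $O(1)$. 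To close this you must exploit the convolution structure rather than a contour estimate: write $\rho_*(x)=\sum_{\omega}b(\gamma_\omega^2x)$ over finite words (using that $b$ vanishes below the bottom of the spectrum, so the sum is finite), i.e. $a_n=\sum_{j\ge0}u_jb_{n-j}$ with $u_j$ the coefficient of $y^j$ in $1/(1-\sum_iy^{m_i})$; partial fractions give $u_j=Ap^j+O\big(j^{m-1}(p/\beta)^j\big)$, and then the remainder is $O\big(\sum_{j\le n}j^{m-1}(p/\beta)^j\big)$, which is $O(1)$, $O(n^m)$ or $O\big(n^{m-1}(p/\beta)^n\big)$ according to whether $p<\beta$, $p=\beta$ or $p>\beta$. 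This also settles the borderline case $p=\beta$, which in your write-up is only asserted (``track the exact singularity type of $\tilde B$'') with no reason given why the answer is $(\log x)^m$ and not some other combination. The remaining points you defer --- uniformity in $c\in[1,e^{2T})$, and the fact that $\gcd(m_1,\dots,m_N)=1$ forces $z=1$ to be the unique, simple zero of $1-\sum_i(z/p)^{m_i}$ on $|z|\le1$ (so that $\beta>1$ and the principal pole is isolated) --- are needed but routine.
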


\medskip

Note that if in particular $\gamma_1=\cdots=\gamma_N$, then $Q(z)\equiv1$ and $\beta=+\infty$. Consequently, $p<\beta$ is always satisfied, and the third case in \eqref{Kigamilattice} always holds.

In the rest of this paper, we will focus on the asymptotic behavior of the eigenvalue counting function $\rho_*(x)$ for Laplacians on open subsets of a p.c.f. self-similar set $K$. From Theorem \ref{KigamiJFA} (specifically, the first and second formulas in \eqref{Kigamilattice}), we observe that the ``inhomogeneity" of the scaling factors $r_i\mu_i$ influences the second-order term.  To investigate the impact of the domain boundary on the second-order term, therefore, we only consider the third case of Theorem \ref{KigamiJFA}. For simplicity, we always assume that $\gamma_i=\gamma$ for all $1\leq i\leq N$. Consequently, $T=-\log\gamma$.


\section{Boundary graph-directed condition}\label{sec3}
\setcounter{equation}{0}\setcounter{theorem}{0}

In this section, for a p.c.f. self-similar set $K$, we review the {\it boundary graph-directed condition} (BGD) for an open subset $\Omega$ in $K$, roughly saying that the boundary of $\Omega$ is a graph-directed self-similar set. This condition is initially introduced by the authors in \cite{GQ} for the investigation of boundary value problems for harmonic functions, and will be concerned throughout the paper.

Recall that
graph-directed self-similar sets are an extension of the concept of self-similar sets. Let $(X,d)$ be a complete metric space. Let $(\mathcal A,\Gamma)$ be a {\it directed graph} (permitting loops and multiple edges) with a finite set of {\it vertices} $\mathcal A=\{1,\ldots,P\}$ and a finite set of {\it directed edges} $\Gamma$. For any $\eta\in\Gamma$, if $\eta$ is a directed edge from $i$ to $j$ for some $i,j\in\mathcal A$, we define $I(\eta)=i$ and $T(\eta)=j$, referring to them as the {\it initial vertex} and {\it terminal vertex} of $\eta$, respectively. For $i,j\in\mathcal A$, let $\Gamma(i)=\{\eta\in\Gamma:\ I(\eta)=i\}$ and $\Gamma({i,j})=\{\eta\in\Gamma: I(\eta)=i,\ T(\eta)=j\}$. We assume that each $\Gamma(i)$ is non-empty, and each  edge $\eta$ is associated with a contraction $\Phi_{\eta}$ on $(X,d)$. Then there exists a unique collection of non-empty compact sets $\{D_i\}_{i=1}^{P}$ in $(X,d)$, termed \textit{graph-directed self-similar sets} \cite{MW}, satisfying the equation
\begin{equation}\label{eqGD}
D_i=\bigcup_{\eta\in\Gamma(i)}\Phi_{\eta}(D_{T(\eta)})\quad \text{for  }1\leq i\leq P.
\end{equation}

Let $m\geq1$. A finite word ${\bf\eta}=\eta_1\eta_2\cdots\eta_m$ with $\eta_i\in\Gamma$ for $ i=1,\ldots,m$ is called {\it admissible} if $T(\eta_i)=I(\eta_{i+1})$ for all $i=1,\ldots,m-1$. We define the length of ${\bf\eta}$ as $|{\bf\eta}|=m$, and write $I({\bf\eta})=I(\eta_1)$ and $T({\bf\eta})=T(\eta_m)$. Additionally, we define $\Phi_{\bf\eta}=\Phi_{\eta_1}\circ\cdots\circ\Phi_{\eta_m}$, the composition of contractions. The set of all admissible words of length $m$ is denoted by $\Gamma_m$, and by convention, $\Gamma_0=\{\emptyset\}$ contains only the empty word. For $0\leq n\leq m$, we denote the $n$-th step truncation of ${\bf\eta}$ as $[{\bf\eta}]_n=\eta_1\cdots\eta_n$.  For $i\in\mathcal A$, we also define $\Gamma_m(i)=\{{\bf\eta}\in \Gamma_m:\ I({\bf\eta})=i\}$ and $\Gamma_*(i)=\bigcup_{m\geq0}\Gamma_m(i)$. Write $\Gamma_*=\bigcup_{i=1}^{P}\Gamma_*(i)$ for all finite admissible words.

\medskip

We now apply the aforementioned definition to a specific context, namely, open subsets in p.c.f. self-similar sets.
Let $(K,\{F_i\}_{i=1}^N,V_0)$ be a p.c.f. self-similar set. For $P\geq1$, let $\{\Omega_1,\Omega_2,\ldots,\Omega_P\}$ be a collection of non-empty open subsets in $K$ such that each $\Omega_i$ has a non-empty boundary with respect to the metric $d$, denoted as $D_i$. We refer to $D_i$ as the {\it geometric boundary} of $\Omega_i$. We assume that the collection $\{(\Omega_i,D_i)\}_{1\leq i\leq P}$ satisfies the following boundary graph-directed condition:

\medskip

\noindent {\bf BGD}:\quad \textit{for $1\leq i\leq P$ and $1\leq k\leq N$, if $\Omega_i\cap F_k(K)\neq\emptyset$ and $D_i\cap F_k(K)\neq\emptyset$, then there exists $1\leq j\leq P$ such that
\begin{equation*}
\Omega_i\cap F_k(K)=F_k(\Omega_j),\quad D_i\cap F_k(K)=F_k(D_j).
\end{equation*}}
\noindent {\bf Remark 1.} {\it Because $F_k(K)$ is arcwise-connected (see \cite[Theorem 1.6.2]{K}), the condition $\Omega_i\cap F_k(K)\neq\emptyset$ implies that either $F_k(K)\subset \Omega_i$ or $D_i\cap F_k(K)\neq\emptyset$. The BGD condition then guarantees that in the latter case there exists an index $j$ such that $\Omega_i\cap F_k(K)=F_k(\Omega_j)$.}

\medskip

Based on the configuration of $\{\Omega_{i}\}_{i=1}^{P}$, we define the directed graph on $\mathcal{
A}=\{1,\ldots,P\}$ as follows. For each pair $(i,j)$ in the BGD condition, we set a directed edge $\eta$ from $i$ to $j$ associated with the contraction map $\Phi_\eta:=F_k$. Let $\Gamma$ be the set of all such directed edges $\eta$ between vertices in $\mathcal A$.
 Consequently, we obtain a directed graph $(\mathcal A,\Gamma)$ and a set of contractions $\{\Phi_\eta\}_{\eta\in\Gamma}$.
Furthermore, the collection $\{D_i\}_{i=1}^{P}$ satisfies the equations \eqref{eqGD}, and thus, $\{D_i\}_{i=1}^{P}$ constitutes a collection of graph-directed self-similar sets.

\medskip

\noindent {\bf Remark 2.} {\it The BGD condition can in fact be relaxed: replacing ``$\Omega_i\cap F_k(K)=F_k(\Omega_j), D_i\cap F_k(K)=F_k(D_j)$" with only ``$\Omega_i\cap F_k(K)=F_k(\Omega_j)$". For clarity, we call this weaker version the $\widetilde{BGD}$ condition. }

\medskip

\begin{proposition} \label{thm 3,1BGD}
If $\{\Omega_1,\ldots,\Omega_P\}$ satisfies $\widetilde{BGD}$, then \eqref{eqGD} still holds.
\end{proposition}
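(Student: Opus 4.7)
The plan is to prove \eqref{eqGD} by establishing both set-inclusions between $D_i$ and $\bigcup_{\eta\in\Gamma(i)}\Phi_\eta(D_{T(\eta)})$ directly. I would fix $i\in\{1,\ldots,P\}$ and begin with a classification step that partitions $\{1,\ldots,N\}$ into three disjoint classes: the interior indices $I_i=\{k:F_k(K)\subset\Omega_i\}$, the boundary indices $B_i=\{k:\Omega_i\cap F_k(K)\ne\emptyset\text{ and }D_i\cap F_k(K)\ne\emptyset\}$, and the external indices $E_i=\{k:\Omega_i\cap F_k(K)=\emptyset\}$. The nontrivial step is to rule out a fourth possibility, namely $\Omega_i\cap F_k(K)\ne\emptyset$ with $D_i\cap F_k(K)=\emptyset$ but $F_k(K)\not\subset\Omega_i$: the condition $D_i\cap F_k(K)=\emptyset$ gives $\overline{\Omega_i}\cap F_k(K)=\Omega_i\cap F_k(K)$, so $\Omega_i\cap F_k(K)$ is simultaneously open and closed in $F_k(K)$, and connectedness of $F_k(K)$ (the continuous image of the connected set $K$) together with nonemptiness forces $F_k(K)\subset\Omega_i$. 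For each $k\in B_i$ I would then select $j_k$ via $\widetilde{BGD}$ so that $\Omega_i\cap F_k(K)=F_k(\Omega_{j_k})$; the edges of $\Gamma(i)$ are in bijection with these pairs, with $\Phi_\eta=F_k$ and $T(\eta)=j_k$.

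For the forward inclusion $\bigcup_{k\in B_i}F_k(D_{j_k})\subset D_i$, given $x=F_k(y)$ with $k\in B_i$ and $y\in D_{j_k}$, I would approximate $y$ by $y_n\in\Omega_{j_k}$ to produce $F_k(y_n)\in F_k(\Omega_{j_k})\subset\Omega_i$ converging to $x$, placing $x$ in $\overline{\Omega_i}$. To see $x\notin\Omega_i$, note that otherwise $x\in\Omega_i\cap F_k(K)=F_k(\Omega_{j_k})$, and injectivity of $F_k$ on $K$ (standard in the p.c.f. framework) would force $y\in\Omega_{j_k}$, contradicting $y\in D_{j_k}$. Notice this step uses only the $\Omega$-part of $\widetilde{BGD}$ and makes no a priori assumption about $D_i\cap F_k(K)$.

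For the reverse inclusion $D_i\subset\bigcup_{k\in B_i}F_k(D_{j_k})$, take $x\in D_i$ and a sequence $x_n\in\Omega_i$ with $x_n\to x$. Since $\Omega_i\subset\bigcup_{k\in I_i\cup B_i}F_k(K)$ and this union is finite, by the pigeonhole principle I may pass to a subsequence with $x_n\in F_{k_*}(K)$ for a single $k_*\in I_i\cup B_i$. The case $k_*\in I_i$ would place $x\in F_{k_*}(K)\subset\Omega_i$, contradicting $x\in D_i$; hence $k_*\in B_i$, so $x_n=F_{k_*}(z_n)$ with $z_n\in\Omega_{j_{k_*}}$. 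Compactness of $K$ then supplies $z_n\to z\in\overline{\Omega_{j_{k_*}}}$ with $F_{k_*}(z)=x$, and excluding $z\in\Omega_{j_{k_*}}$ (otherwise $x\in\Omega_i$) leaves $z\in D_{j_{k_*}}$, whence $x\in F_{k_*}(D_{j_{k_*}})$, a term of the right-hand union.

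The main obstacle is the classification step in the first paragraph: the openness/closedness argument on $F_k(K)$ is what guarantees that every approximating sequence in $\Omega_i$ can be routed through cells captured by $\widetilde{BGD}$. Once this classification is in hand, combining the two inclusions yields $D_i=\bigcup_{k\in B_i}F_k(D_{j_k})=\bigcup_{\eta\in\Gamma(i)}\Phi_\eta(D_{T(\eta)})$, which is precisely \eqref{eqGD}; in particular, the weakening from BGD to $\widetilde{BGD}$ leaves the graph-directed structure of the boundaries unchanged.
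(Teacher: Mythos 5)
Your proof is correct and takes essentially the same approach as the paper: both inclusions are verified directly, your clopen/connectedness trichotomy of the indices $k$ reproduces the paper's Remark 1, and your sequence-plus-pigeonhole argument for the reverse inclusion is just a sequential rephrasing of the paper's neighborhood argument with its case split on $x\in V_1$.
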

\begin{proof}
``$D_i\supset\bigcup_{\eta\in\Gamma(i)}\Phi_\eta(D_{T(\eta)})$". Take $\eta\in \Gamma(i)$, write $T(\eta)=j$ and $\Phi_\eta=F_k$. For any $x\in F_k(D_j)$, we have $x\notin F_k(\Omega_j)=\Omega_i\cap F_k(K)$, hence $x\notin\Omega_i$. Moreover, every neighborhood of $x$ meets $F_k(\Omega_j)\subset\Omega_i$. These imply $x\in D_i$, so the inclusion ``$\supset$" holds.

``$D_i\subset\bigcup_{\eta\in\Gamma(i)}\Phi_\eta(D_{T(\eta)})$". Let $x\in D_i$. Then $x\notin \Omega_i$, and every neighborhood of $x$ meets $\Omega_i$. We claim there exists $k\in\{1,\ldots,N\}$ such that every neighborhood of $x$ meets $\Omega_i\cap F_k(K)$. Indeed, if $x\notin V_1$, then $x\in F_k(K\setminus V_0)$ for a unique $k$, and this $k$ satisfies the claim; if $x\in V_1$, then $x$ lies in finitely many $1$-cells of $K$, and we can choose one of them, say $F_k(K)$, such that every neighborhood of $x$ meets
$\Omega_i\cap F_k(K)$. By the claim, we have $\Omega_i\cap F_k(K)\neq\emptyset$ and $D_i\cap F_k(K)\neq\emptyset$. By $\rm \widetilde{BGD}$, there exists $j\in\{1,\ldots,P\}$ with $\Omega_i\cap F_k(K)=F_k(\Omega_j)$. Hence $x\in F_k(D_j)$, and the inclusion ``$\subset$" holds.
\end{proof}

Note that under $\rm \widetilde{BGD}$, Proposition \ref{thm 3,1BGD} gives $D_i\cap F_k(K)=F_k(D_j\cup V)$ for some $V\subset V_0$, whenever $\Omega_i\cap F_k(K)\neq\emptyset$ and $D_i\cap F_k(K)\neq\emptyset$. This is weaker than the identity $D_i\cap F_k(K)=F_k(D_j)$, required by BGD. Nevertheless, since \eqref{eqGD} remains valid, all subsequent arguments apply to $\rm\widetilde{BGD}$ as well. See Example \ref{subsec6.1}-5 for an example satisfying $\rm\widetilde{BGD}$ but not BGD.

In what follows, for ${\bf\eta}\in\Gamma_*$,
we write $\Omega_{\bf\eta}:=\Phi_{\bf\eta}(\Omega_{T({\bf\eta})})$ and $D_{\bf\eta}:=\Phi_{\bf\eta}(D_{T({\bf\eta})})$ for short.

\section { Weyl-Berry asymptotics: the irreducible case}\label{sec 4}
\setcounter{equation}{0}\setcounter{theorem}{0}

In this section, we will consider the Weyl-Berry spectral asymptotic for BGD domains $\{\Omega_i\}_{i=1}^P$ in a p.c.f. self-similar set $K$, equipped with a strongly recurrent self-similar Dirichlet form $(\mathcal E,\mathcal F)$, under the assumption that $\gamma_i=\gamma$ for all $1\leq i\leq N$. We will only look at the irreducible case and postpone the general case to the next section.

Let $(\mathcal A,\Gamma)$ be the associated directed graph. We write the {\it incidence matrix} of $(\mathcal A,\Gamma)$ as $A=(a_{ij})_{P\times P}$, which is a $P\times P$ non-negative matrix with $a_{ij}=\#\Gamma(i,j)$.  In this section, we assume that $A$ is {\it irreducible}, i.e. for any $i,j\in \{1,\ldots,P\}$, there exists $n\geq1$ such that $(A^n)_{ij}>0$.
We write $\Psi(A)$ the spectral radius of $A$ and $\tilde A=\frac1{\Psi(A)}A$ for normalization.

\medskip

\noindent{\bf Remark.}  {\it $1\leq\Psi(A)<N$.}

Since for large $n$, each $\Omega_i$ must contain at least one $n$-cell, the summation of each row of $A^n$ is strictly less than $N^n$, giving that $\Psi(A^n)<N^n$ and so $\Psi(A)<N$. $\Psi(A)\geq1$ is clear.

\medskip


Let $\mu$ be a self-similar measure on $K$ with probability weights $\mu_1,\ldots,\mu_N$.
Then the measure of the open sets $\Omega_i$ satisfy the following recursive formula:
\begin{equation}\label{genmeasureformula}
\mu(\Omega_i)=\sum\limits_{{\eta}\in\Gamma(i)}\mu_{T({\eta})}\mu(\Omega_{T({\eta})})+\sum\limits_{k\in\Sigma :\ K_k\subset \Omega_i}\mu_k\quad \text{for  }1\leq i\leq P.
\end{equation}

Let $\Omega$ be an open set in $K$. For a function $u\in C(\Omega)$, by considering $\Omega$ as a countable union of cells whose pairwise intersection is a set of finite points, we define the energy of $u$ on $\Omega$ to be the summation of energies of
$u$ on each of the cells, and denote it as $\mathcal E_{\Omega}[u]$ (might equal to $+\infty$). By virtue of \eqref{essn}, we
see that $\mathcal E_{\Omega}[u]$ does not depend on the partition of $\Omega$. Define
\begin{equation*}\mathcal F_{\Omega} = \{u \in
C(\Omega)\cap L^2(\Omega,\mu|_{\Omega}): \mathcal E_{\Omega}[u] < \infty\},
\end{equation*}
 where $\mu|_{\Omega}$ is the restriction of $\mu$ on $\Omega$.
 By polarization, we define
 \begin{equation*}
 \mathcal E_{\Omega}(u,v)=
\frac14\big(\mathcal E_{\Omega}[u+v]-\mathcal E_{\Omega}[u-v]\big) \quad\text{for
$u, v \in \mathcal F_{\Omega}$}.
\end{equation*}
It is direct to check that $(\mathcal E_{\Omega},\mathcal F_{\Omega})$ is a Dirichlet form on $L^2(\Omega,\mu|_{\Omega})$.
We also define $\mathcal F_{\Omega,0}$ to be the closure of $C_0(\Omega)\cap\mathcal F_{\Omega}$ under $\mathcal E^{1/2}_{\Omega,1}$-norm, where $C_0(\Omega)$ means the space of continuous functions compactly supported in $\Omega$ and $\mathcal E_{\Omega,1}[u]=\mathcal E_{\Omega}[u]+\int_{\Omega}u^2d\mu$ for $u\in\mathcal F_{\Omega}$. Let $\mathcal E_{\Omega,0}$ be the restriction of $\mathcal E_{\Omega}$ on $\mathcal F_{\Omega,0}\times \mathcal F_{\Omega,0}$, then $(\mathcal E_{\Omega,0},\mathcal F_{\Omega,0})$ also turns out to be a Dirichlet form on $L^2(\Omega,\mu|_{\Omega})$.

Let $\Delta_{i,D}$ (or $\Delta_{i,N}$) denote the Laplace operator of the form $(\mathcal E_{\Omega_i,0},\mathcal F_{\Omega_i,0})$ (or $(\mathcal E_{\Omega_i},\mathcal F_{\Omega_i})$) on $L^2(\Omega_i,\mu|_{\Omega_i})$ with Dirichlet (or Neumann) boundary conditions. For the Dirichlet case, since $\mathcal F_{\Omega_i,0}\subset \mathcal F$, the operator $-\Delta_{i,D}$ has compact resolvent. For the Neumann case, assuming in addition that $\Omega_i$ is connected, Proposition 4.3 in \cite{GQ} implies that ${\Omega_i}$ is bounded in the effective resistance metric, hence $-\Delta_{i,D}$ also possesses compact resolvent. Consequently, both $-\Delta_{i,D}$ and $-\Delta_{i,N}$ (under the connectivity assumption on $\Omega_i$) have purely discrete spectra contained in $[0,+\infty)$ with the only accumulation point at $+\infty$. In what follows, we shall always assume the domains $\{\Omega_i\}_{i=1}^P$ are connected whenever Neumann eigenvalue problems are discussed.

For $1\leq i\leq P$ and $*=D$ or $N$, we define the {\it eigenvalue counting function} of $-\Delta_{i,*}$ as
\begin{equation*}
\rho^{\Omega_i}_{*}(x)=\#\{k:\ k\leq x \text{ and $k$ is a positive eigenvalue of } -\Delta_{i,*}\},
\end{equation*}
where each eigenvalue is counted according to its multiplicity.

As in \eqref{eqecfK}, we also denote $\rho_{*}(x)$ the corresponding eigenvalue counting functions of $-\Delta_*$ on $K\setminus V_0$. By Theorem \ref{KigamiJFA}, we know that under the assumption that $\gamma_i=\gamma$ for all $1\leq i\leq N$, we have for $*=D \text{ or }N$,
\begin{equation}\label{Kigamiformula}
\rho_{*}(x)=G\Big(\frac{\log x}2\Big)x^{\frac{d_S}2}+O(1) \quad \text{as $x\rightarrow+\infty$},
\end{equation}
where $G$ is a positive, bounded, right-continuous, periodic function with period $T=-\log\gamma$, and  the {\it spectral exponent} $d_S=\frac{\log N}{-\log\gamma}$. 


Since $\tilde A$ is an irreducible non-negative matrix, its spectral radius $1$ is a single eigenvalue and the corresponding eigenvectors are strictly positive. Fix ${\bf b}=(b_1,\cdots,b_P)$ to be a right $1$-eigenvector of $\tilde A$. For any $1\leq i\leq P$, and any ${\bf\xi}\in \Gamma_m(i)$, $m\geq1$, define a set function $\kappa_i$ on $\{D_{\bf\xi}:\ {\bf\xi}\in \Gamma_*(i)\}$ by
\begin{equation*}
\kappa_i(D_{\bf\xi})=\frac1{\Psi(A)^{m}} b_{T({\bf\xi})}.
\end{equation*}
In a standard way, since
\begin{align*}
\sum_{\eta\in\Gamma(T({\bf\xi}))}\kappa_i(D_{{\bf\xi}\eta})&=\sum_{j=1}^P\sum_{\eta\in\Gamma(T({\bf\xi}),j)}\frac{1}{\Psi(A)^{m+1}}b_j=\frac{1}{\Psi(A)^{m+1}}\sum_{j=1}^Pa_{T({\bf\xi}),j}b_j\\
&=\frac{1}{\Psi(A)^{m}}(\tilde A{\bf b})_{T({\bf\xi})}=\frac{1}{\Psi(A)^{m}}b_{T({\bf\xi})}=\kappa_i(D_{\bf\xi}),
\end{align*}
 $\kappa_i$ extends to be a Borel measure on $D_i$ by the Kolmogorov extension theorem.
Note that $\kappa_i(D_i)=b_i$ for any $1\leq i\leq P$.

In the following, we write $\nu$ the $(\frac1N,\ldots,\frac1N)$-self-similar measure on $K$. Note that by \eqref{genmeasureformula}, it satisfies
\begin{equation}\label{genmeasureformula1}
\nu(\Omega_i)=\frac{1}{N}\Big(\sum\limits_{{\eta}\in\Gamma(i)}\nu(\Omega_{T({\eta})})+\#\{k\in\Sigma :\ K_k\subset \Omega_i\}\Big)\quad \text{for  }1\leq i\leq P.
\end{equation}
We refer to $\nu|_{\Omega_i}$ and $\kappa_i$ as {\it the spectral reference measures} on $\Omega_i$ and $D_i$ for $1\leq i\leq P$, respectively.

For the irreducible non-negative $P\times P$ matrix $A$, define for $i,j\in\{1,\ldots,P\}$, $t_{ij}=\min\ \{k\geq1:\ A^k(i,j)>0\}$. Let $\mathcal G_{i}$ be the subgroup of $\mathbb Z$ generated by $\{k\geq1:\ A^k(i,i)>0\}$, and $t_i\geq1$ be the generator of $\mathcal G_{i}$. Let $\varrho$ be the greatest common divisor of $t_1,\ldots,t_P$. Note that $\varrho$ is the generator of the subgroup in $\mathbb Z$ generated by $t_1,\ldots,t_P$.

The following is the main result in this section.
\begin{theorem}\label{thm-irreducible}
Assume $A$ is irreducible. Let $G$ be the same function as in \eqref{Kigamiformula}.

When $\Psi(A)>1$, there exist two bounded $\varrho T$-periodic functions $G_{*}$ for $*=D$ or $N$ such that for $i\in\{1,\ldots,P\}$, as $x\rightarrow+\infty$,
\begin{equation}\label{eqsecorder}
\rho^{\Omega_i}_{*}(x)=\nu(\Omega_i)G\Big(\frac{\log x}{2}\Big)x^{\frac{d_S}2}+\kappa_i(D_i)G_{*}\Big(\frac{\log x}{2}-t_{i1}T\Big) x^{\frac{d}2}+o\big(x^{\frac{d}2}\big),
\end{equation}
where $d_S=\frac{\log N}{-\log\gamma}$ and $d=\frac{\log\Psi(A)}{-\log\gamma}\in(0,d_S)$.

When $\Psi(A)=1$ (equivalently, each $D_i$ is a singleton in $K$), it holds that for $i\in\mathcal A$,
\begin{equation*}
\rho^{\Omega_i}_{*}(x)=G\Big(\frac{\log x}2\Big)x^{\frac{d_S}2}+O(1)\quad \text{as $x\rightarrow+\infty$}.
\end{equation*}
\end{theorem}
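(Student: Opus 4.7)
The plan is to reduce Theorem \ref{thm-irreducible} to a vector-valued renewal equation for $\vec\rho_*(x) := (\rho_*^{\Omega_i}(x))_{i=1}^P$, leveraging the BGD recursion and the homogeneity assumption $\gamma_i = \gamma$, and then to invoke the renewal theorem from Section \ref{sec 7}. First, using the BGD decomposition $\Omega_i = \bigcup_{\eta\in\Gamma(i)}\Phi_\eta(\Omega_{T(\eta)}) \cup \bigcup_{k:K_k\subset\Omega_i}K_k$ at the $1$-cell level, combined with Dirichlet-Neumann bracketing across the internal $V_1$-vertices (a finite-rank perturbation of size $O(\#V_1)$), and the scaling identity $r_k\mu_k = \gamma^2$, I would establish the recursive identity
\begin{equation*}
\rho_*^{\Omega_i}(x) = \sum_{\eta\in\Gamma(i)}\rho_*^{\Omega_{T(\eta)}}(\gamma^2x) + \sum_{k:\,K_k\subset\Omega_i}\rho_*^{K\setminus V_0}(\gamma^2x) + O(1).
\end{equation*}
Theorem \ref{KigamiJFA} gives $\rho_*^{K\setminus V_0}(\gamma^2 x) = \frac{1}{N}G(\frac{\log x}{2})x^{d_S/2} + O(1)$ (using $\gamma^{d_S} = 1/N$ and the $T$-periodicity of $G$), and the recursion \eqref{genmeasureformula1} for $\nu(\Omega_i)$ then cleanly isolates the Weyl term. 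The remainder $R_i^*(x) := \rho_*^{\Omega_i}(x) - \nu(\Omega_i)G(\frac{\log x}{2})x^{d_S/2}$ therefore satisfies the matrix relation $\vec R^*(x) = A\vec R^*(\gamma^2 x) + O(\vec 1)$.

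Next, setting $y = \frac{\log x}{2}$, $T = -\log\gamma$, $d = \frac{\log\Psi(A)}{T}$, and $g_i^*(y) := e^{-dy}R_i^*(e^{2y})$, the identity $e^{-dT} = \Psi(A)^{-1}$ rewrites the remainder equation as
\begin{equation*}
\vec g^*(y) = \tilde A\,\vec g^*(y-T) + \vec\epsilon(y), \qquad \vec\epsilon(y) = O(e^{-dy}),
\end{equation*}
where $\tilde A = \Psi(A)^{-1}A$ has spectral radius $1$. Because $A$ is irreducible, Perron-Frobenius identifies the peripheral spectrum of $\tilde A$ with the $\varrho$-th roots of unity and provides a unique positive right eigenvector $\vec b$. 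The vector-valued renewal theorem in Section \ref{sec 7} then yields $g_i^*(y) = b_i G_*(y - t_{i1}T) + o(1)$ as $y\to\infty$, where $G_*$ is bounded and $\varrho T$-periodic; the shift $t_{i1}T$ arises because the phase of $g_i^*$ at vertex $i$ is determined by the minimum admissible path length from $i$ to the reference vertex $1$. Running the argument separately for $* = D$ and $* = N$ and squeezing via the bracketing recovers \eqref{eqsecorder}, with the coefficient $b_i$ matching $\kappa_i(D_i)$ because the Kolmogorov extension defining $\kappa_i$ uses precisely the Perron weights.

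When $\Psi(A) = 1$, irreducibility forces $A$ to be a permutation matrix, so each $D_i$ is a single point, and the problem reduces, up to a finite-rank perturbation, to the Kigami-Lapidus setting; Theorem \ref{KigamiJFA} then yields the $O(1)$ remainder directly. The principal obstacle will be the $\varrho$-cyclic vector-valued renewal theorem invoked in the argument: handling the $\varrho$ peripheral eigenvalues requires decomposing the vertex set into $\varrho$ cyclic classes, tracking the per-class phase, and matching it precisely to the shortest path lengths $t_{i1}$ so as to produce a uniform periodic kernel $G_*$ independent of $i$. A secondary but essential check is that the bracketing error remains $O(1)$ across all renewal iterations; this is guaranteed by the finiteness of $V_1$ and by the geometric ratio $\Psi(A) < N$, so that the cumulative bracketing error stays of smaller order than $x^{d/2}$.
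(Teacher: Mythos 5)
Your proposal follows essentially the same route as the paper's proof: the BGD decomposition with Dirichlet--Neumann bracketing to get the recursion $\vec\rho_*(x)=A\vec\rho_*(\gamma^2x)+\rho_*(\gamma^2x)\mathbf{s}+O(\mathbf{1})$, subtraction of the Weyl term using $\nu(\Omega_i)$'s self-similar recursion, the exponential change of variables turning this into a lattice renewal equation for $\tilde A=\Psi(A)^{-1}A$, and the vector-valued renewal theorem (Corollary \ref{thm7.3}) producing the $\varrho T$-periodic $G_*$ with phase shifts $t_{i1}T$ and Perron coefficients $b_i=\kappa_i(D_i)$; the $\Psi(A)=1$ case via the permutation-matrix observation also matches. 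The argument is correct and is the paper's argument.
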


Before proceeding, we introduce three more types of auxiliary Dirichlet forms:

\medskip

1. $(\mathcal E_{\Omega_i},\mathcal F_{\Omega_i,0,0})$. Define $\mathcal F_{\Omega_i,0,0}=\{u\in \mathcal F_{\Omega_i,0}:\ u|_{\Omega_i\cap V_0}=0\}$ and restrict $\mathcal E_{\Omega_i}$ on $\mathcal F_{\Omega_i,0,0}\times\mathcal F_{\Omega_i,0,0}$.

\medskip

2. $(\mathcal E_{\Omega_i},\mathcal F'_{\Omega_i})$. Define $\mathcal F'_{\Omega_i}=\{u\in\mathcal F_{\Omega_i,0}:\ u|_{\Omega_i\cap V_1}=0\}$ and restrict $\mathcal E_{\Omega_i}$ on ${\mathcal F'_{\Omega_i}\times\mathcal F'_{\Omega_i}}$.

\medskip

 3.  $(\widetilde {\mathcal E}_{\Omega_i},\widetilde {\mathcal F}_{\Omega_i})$. Define
\begin{align*}
\widetilde {\mathcal F}_{\Omega_i}=\Big\{u: \Omega_i\setminus\ V_1\rightarrow \mathbb R:\ &\text{$u\circ F_k\in \mathcal F_{\Omega_j}$ for $k\in\Sigma$ such that $\Omega_i\cap K_k =\Phi_{\eta}(\Omega_j)$ for some}\\
& \text{$j\in\mathcal A$ and $\eta\in\Gamma(i)$; $u\circ F_k\in \mathcal F$  for other $k\in\Sigma$}\Big\},
\end{align*}
and let $\widetilde {\mathcal E}_{\Omega_i}$ be the form on $\widetilde {\mathcal F}_{\Omega_i}$ defined as
\begin{equation*}
\widetilde {\mathcal E}_{\Omega_i}(u,v)=\sum_{\eta:\eta\in\Gamma(i)}\frac1{r_\eta}\mathcal E_{\Omega_{T(\eta)}}(u\circ \Phi_\eta,v\circ \Phi_\eta)+\sum_{k: K_k \subset \Omega_i}\frac1{r_k}\mathcal E(u\circ F_k,v\circ F_k),
\end{equation*}
where $r_{\eta}:=r_k$ for $k\in\{1,\ldots,N\}$ such that $\Phi_{\eta}=F_k$. Note that by regarding $\mathcal F_{\Omega_i}$ as a subspace of $L^2(\Omega_i\setminus V_1,\mu)$, we have $\mathcal F_{\Omega_i}\subset \widetilde {\mathcal F}_{\Omega_i}$ and ${\mathcal E}_{\Omega_i}=\widetilde {\mathcal E}_{\Omega_i}|_{\mathcal F_{\Omega_i}\times\mathcal F_{\Omega_i}}$.

 Denote by $\rho(x;\mathcal E_{\Omega_i},\mathcal F_{\Omega_i,0,0})$, $\rho(x; \mathcal E_{\Omega_i},\mathcal F'_{\Omega_i})$ and $\rho(x; \widetilde {\mathcal E}_{\Omega_i},\widetilde {\mathcal F}_{\Omega_i})$ the corresponding eigenvalue counting functions  associated with the above Dirichlet forms. Then by the Dirichlet-Neumann bracketing method (see \cite[Proposition 2.7]{Me} and also \cite[Corollary 4.7]{KL}), we have for any $x$,
\begin{align}\label{DBCineq}
\begin{cases}
&\rho(x;\mathcal E_{\Omega_i},\mathcal F_{\Omega_i,0,0})\leq \rho(x;\mathcal E_{\Omega_i},\mathcal F_{\Omega_i,0})\leq \rho(x;\mathcal E_{\Omega_i},\mathcal F_{\Omega_i,0,0})+\dim \mathcal F_{\Omega_i,0}/\mathcal F_{\Omega_i,0,0},\\
&\rho(x;\mathcal E_{\Omega_i},\mathcal F'_{\Omega_i})\leq \rho(x;\mathcal E_{\Omega_i},\mathcal F_{\Omega_i,0})\leq \rho(x;\mathcal E_{\Omega_i},\mathcal F'_{\Omega_i})+\dim \mathcal F_{\Omega_i,0}/\mathcal F'_{\Omega_i},\\
&\rho(x;\mathcal E_{\Omega_i},\mathcal F_{\Omega_i})\leq \rho(x; \widetilde {\mathcal E}_{\Omega_i},\widetilde {\mathcal F}_{\Omega_i})\leq \rho(x;\mathcal E_{\Omega_i},\mathcal F_{\Omega_i})+\dim \widetilde {\mathcal F}_{\Omega_i}/\mathcal F_{\Omega_i},
\end{cases}
\end{align}
where $\dim \mathcal F_{\Omega_i,0}/\mathcal F_{\Omega_i,0,0}$ is the dimension of the space of functions in $\mathcal F_{\Omega_i,0}$ with prescribed values on $\Omega_i\cap V_0$ and harmonic elsewhere, thus is equal to
$\#(\Omega_i\cap V_0)\leq \#V_0$, and similarly, $\dim \mathcal F_{\Omega_i,0}/\mathcal F'_{\Omega_i}\leq N\cdot \#V_0$, $\dim \widetilde {\mathcal F}_{\Omega_i}/\mathcal F_{\Omega_i}\leq N\cdot \#V_0$.

\begin{lemma}\label{lemmaKL}For $1\leq i\leq P$, we have for any $x$,
\begin{align*}
\rho(x;\mathcal E_{\Omega_i},\mathcal F'_{\Omega_i})&=\sum_{{\eta}: {\eta}\in\Gamma(i)}\rho(\gamma^2x;\mathcal E_{\Omega_{T({\eta})}},\mathcal F_{\Omega_{T({\eta})},0,0})+\rho_D(\gamma^2x)\cdot \#\{k:\ K_k\subset \Omega_i\},\\
\rho(x;\widetilde {\mathcal E}_{\Omega_i},\widetilde {\mathcal F}_{\Omega_i})&=\sum_{{\eta}: {\eta}\in\Gamma(i)}\rho(\gamma^2x;{\mathcal E}_{\Omega_{T({\eta})}},{\mathcal F}_{\Omega_{T({\eta})}})+\rho_N(\gamma^2x)\cdot \#\{k:\ K_k\subset \Omega_i\}.
\end{align*}
\end{lemma}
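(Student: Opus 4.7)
The plan is to derive both identities from orthogonal direct-sum decompositions of the relevant form spaces across the $1$-cells of $K$ meeting $\Omega_i$, combined with the self-similar rescaling of eigenvalues. First I would classify these $1$-cells via BGD into \emph{interior cells} $K_k\subset\Omega_i$ and \emph{boundary cells} $K_k=\Phi_\eta(K)$ with $\eta\in\Gamma(i)$, so that $\Omega_i\cap K_k=F_k(\Omega_{T(\eta)})$ and $D_i\cap K_k=F_k(D_{T(\eta)})$. For $u\in\mathcal F'_{\Omega_i}$, the requirement $u|_{\Omega_i\cap V_1}=0$ together with $u\in\mathcal F_{\Omega_i,0}$ forces $u\circ F_k$ to vanish on all of $V_0$ for each interior cell (yielding an element of $\mathcal F_0$), and to vanish on both $D_{T(\eta)}$ (from $\mathcal F_{\Omega_i,0}$) and $V_0\cap\Omega_{T(\eta)}$ (from the $V_1$-vanishing) for each boundary cell (yielding an element of $\mathcal F_{\Omega_{T(\eta)},0,0}$). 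Conversely, any family of such cell-wise pieces glues to an element of $\mathcal F'_{\Omega_i}$ by extension by zero, since the required boundary values on $\Omega_i\cap V_1$ are all zero.

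Next I would apply the self-similarity identity \eqref{essn} to decompose $\mathcal E_{\Omega_i}(u,u)$ as a sum of cell energies; the $L^2(\Omega_i,\mu)$ inner product splits the same way, since distinct cells overlap only on the $\mu$-null set $V_1\cap\Omega_i$. Thus $u\mapsto(u\circ F_k)_k$ is a Hilbert-space isomorphism from $(\mathcal E_{\Omega_i},\mathcal F'_{\Omega_i})$ onto the orthogonal direct sum of the rescaled forms $(r_k^{-1}\mathcal E,\mathcal F_0)$ with $L^2$ weight $\mu_k$ over interior cells, and $(r_{k(\eta)}^{-1}\mathcal E_{\Omega_{T(\eta)}},\mathcal F_{\Omega_{T(\eta)},0,0})$ with weight $\mu_{k(\eta)}$ over boundary cells (where $\Phi_\eta=F_{k(\eta)}$). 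Since the spectrum of a direct sum is the disjoint union (with multiplicities) of summand spectra, the counting function is additive. On each summand the eigenvalue equation $r_k^{-1}\mathcal E(v,w)=\lambda\mu_k\int vw\,d\mu$ rearranges to $\mathcal E(v,w)=r_k\mu_k\lambda\int vw\,d\mu=\gamma^2\lambda\int vw\,d\mu$, using the homogeneity $r_k\mu_k=\gamma^2$; hence $\lambda$ is an eigenvalue of the summand iff $\gamma^2\lambda$ is an eigenvalue of the standard form, and its counting function at $x$ equals the standard counting function at $\gamma^2x$. Summing over all cells produces the first identity.

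The second identity is completely parallel: by definition $\widetilde{\mathcal F}_{\Omega_i}$ imposes no continuity at $V_1\cap\Omega_i$, so the same decomposition goes through with $\mathcal F_0$ and $\mathcal F_{\Omega_{T(\eta)},0,0}$ replaced by $\mathcal F$ and $\mathcal F_{\Omega_{T(\eta)}}$, producing $\rho_N(\gamma^2x)$ on interior cells and $\rho(\gamma^2x;\mathcal E_{\Omega_{T(\eta)}},\mathcal F_{\Omega_{T(\eta)}})$ on boundary cells. The main point to verify carefully is the direct-sum isomorphism for $\mathcal F'_{\Omega_i}$: one should confirm that extending cell-wise pieces (with the required vanishing on the relevant $V_0$-sets) by zero across $V_1\cap\Omega_i$ produces a function in the closure $\mathcal F_{\Omega_i,0}$ of $C_0(\Omega_i)\cap\mathcal F_{\Omega_i}$, which follows by approximating each piece in $\mathcal F_{\Omega_{T(\eta)},0,0}$ (or $\mathcal F_0$) by functions of slightly smaller support and gluing. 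Given this isomorphism, additivity of counting functions under direct sums together with the cell-wise scaling completes the proof.
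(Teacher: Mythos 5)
Your proposal is correct and follows essentially the same route as the paper: decompose $\Omega_i$ into its level-one pieces via BGD, identify the restricted form spaces cell by cell, and use the homogeneity $r_k\mu_k=\gamma^2$ to rescale eigenvalues. The paper phrases this by testing the eigenfunction equation against arbitrary $g\in\mathcal F'_{\Omega_i}$ and invoking ``a converse consideration,'' whereas you package the same content as an orthogonal direct-sum isomorphism of the forms; your explicit attention to the gluing/extension-by-zero step is a welcome elaboration of what the paper leaves implicit, but it is not a different argument.
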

\begin{proof}
The idea of the proof is from \cite[Proposition 6.2]{KL}. Let $f$ be an eigenfunction of the Dirichlet form $(\mathcal E_{\Omega_i},\mathcal F'_{\Omega_i})$ with eigenvalue $\lambda$. By the BGD condition, $\Omega_i$ is a union of several non-overlapping parts, i.e. $\{\Phi_{\eta}(\Omega_{T({\eta})}):\ {\eta}\in\Gamma(i)\}$ and $\{K_k:\ 1\leq k\leq N, K_k\subset \Omega_i\}$. For any $g\in \mathcal F'_{\Omega_i}$, we denote $g_{\eta}=g\circ \Phi_{\eta}$ for $\eta\in \Gamma(i)$ and $g_k=g\circ F_{k}$ for $1\leq k\leq N$ such that $K_k\subset \Omega_i$. Note that $g_{\eta}\in\mathcal F_{\Omega_{T(\eta),0,0}}$, and $g_k\in\mathcal F_0$. So we have
\begin{equation}\label{equation1forE}
\mathcal E_{\Omega_i}(f,g)=\sum_{{\eta}: {\eta}\in\Gamma(i)}\frac{1}{r_{\eta}}\mathcal E_{\Omega_{T({\eta})}}(f_{\eta},g_{\eta})+\sum_{k: K_k\subset\Omega_i}\frac{1}{r_k}\mathcal E(f_k,g_k),
\end{equation}
and
\begin{equation}\label{equation2forL2}
\int_{\Omega_i}fgd\mu=\sum_{{\eta}: {\eta}\in\Gamma(i)}\mu_{\eta}\int_{\Omega_{T(\eta)}}f_{\eta}g_{\eta}d\mu+\sum_{k: K_k\subset\Omega_i}\mu_k\int_{K}f_kg_kd\mu,
\end{equation}
where $r_\eta=r_k$, $\mu_\eta=\mu_k$ for $k\in\{1,\ldots,N\}$ such that $\Phi_{\eta}=F_k$.

Hence by $\mathcal E_{\Omega_i}(f,g)=\lambda\int_{\Omega_i}fgd\mu$ and the arbitrariness of $g$, we see from \eqref{equation1forE} and \eqref{equation2forL2} that for any ${\eta}\in\Gamma(i)$, $f_{\eta}$ is an eigenfunction of the Dirichlet form $(\mathcal E_{\Omega_{T({\eta})}},\mathcal F_{\Omega_{T({\eta}),0,0}})$ with eigenvalue $\gamma^2\lambda$; for any $k$ with $K_k\subset\Omega_i$, $f_k$ is an eigenfunction of the Dirichlet form $(\mathcal E,\mathcal F_0)$ with eigenvalue $\gamma^2\lambda$. Together with a converse consideration, we have for any $x$,
\begin{equation*}
\rho(x;\mathcal E_{\Omega_i},\mathcal F'_{\Omega_i})=\sum_{{\eta}: {\eta}\in\Gamma(i)}\rho(\gamma^2x;\mathcal E_{\Omega_{T({\eta})}},\mathcal F_{\Omega_{T({\eta})},0,0})+\sum_{k: K_k\subset \Omega_i}\rho_D(\gamma^2x).
\end{equation*}
This proves  the first line of equalities. The second follows in a similar way.
\end{proof}

Combining Lemma \ref{lemmaKL} and \eqref{DBCineq}, we immediately have the following corollary.
\begin{corollary}\label{thmcor}
For $1\leq i\leq P$, by letting $M=N\cdot \#V_0$, we have for any $x$,
\begin{equation*}
\rho^{\Omega_i}_{D}(x)-M\leq \sum_{{\eta}: {\eta}\in\Gamma(i)}\rho^{\Omega_{T(\eta)}}_{D}(\gamma^2x)+\rho_D(\gamma^2x)\cdot\#\{k: K_k\subset \Omega_i\}\leq \rho^{\Omega_i}_{D}(x)+M
\end{equation*}
for the Dirichlet case, and
\begin{equation*}
\rho^{\Omega_i}_{N}(x)\leq\sum_{{\eta}: {\eta}\in\Gamma(i)}\rho^{\Omega_{T(\eta)}}_{N}(\gamma^2x)+\rho_N(\gamma^2x)\cdot\#\{k: K_k\subset \Omega_i\}\leq \rho^{\Omega_i}_{N}(x)+M
\end{equation*}
for the Neumann case.
\end{corollary}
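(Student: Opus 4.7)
The plan is to assemble the corollary directly from its two inputs: Lemma \ref{lemmaKL} supplies exact self-similar decompositions of two auxiliary counting functions on $\Omega_i$ in terms of those on the children $\Omega_{T(\eta)}$ and on $K\setminus V_0$, while the bracketing inequalities \eqref{DBCineq} relate each of those auxiliary counting functions to the target $\rho^{\Omega_i}_*(x)$ itself up to a dimension defect of at most $M=N\cdot\#V_0$. Recalling that by definition $\rho^{\Omega_i}_D(x)=\rho(x;\mathcal E_{\Omega_i},\mathcal F_{\Omega_i,0})$ and $\rho^{\Omega_i}_N(x)=\rho(x;\mathcal E_{\Omega_i},\mathcal F_{\Omega_i})$, all I really have to do is bridge these to $\rho(x;\mathcal E_{\Omega_i},\mathcal F'_{\Omega_i})$ and $\rho(x;\widetilde{\mathcal E}_{\Omega_i},\widetilde{\mathcal F}_{\Omega_i})$ respectively, and then invoke Lemma \ref{lemmaKL}.

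For the Dirichlet case I would use the second line of \eqref{DBCineq} to sandwich $\rho^{\Omega_i}_D(x)$ against $\rho(x;\mathcal E_{\Omega_i},\mathcal F'_{\Omega_i})$ with error $\leq M$, expand the latter via the first identity of Lemma \ref{lemmaKL}, and finally convert each summand $\rho(\gamma^2 x;\mathcal E_{\Omega_{T(\eta)}},\mathcal F_{\Omega_{T(\eta)},0,0})$ into $\rho^{\Omega_{T(\eta)}}_D(\gamma^2 x)$ using the first line of \eqref{DBCineq}. The per-edge defect in this last step is at most $\#(V_0\cap\Omega_{T(\eta)})\leq\#V_0$; since BGD pairs each $\eta\in\Gamma(i)$ with a distinct index $k\in\{1,\ldots,N\}$, we have $\#\Gamma(i)\leq N$, and the total defect summed over $\eta$ therefore stays within $M$. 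The Neumann case is shorter: the third line of \eqref{DBCineq} already sandwiches $\rho^{\Omega_i}_N(x)$ around $\rho(x;\widetilde{\mathcal E}_{\Omega_i},\widetilde{\mathcal F}_{\Omega_i})$ with defect $\leq M$, and the second identity of Lemma \ref{lemmaKL} directly expresses the middle quantity as $\sum_{\eta\in\Gamma(i)}\rho^{\Omega_{T(\eta)}}_N(\gamma^2 x)+\rho_N(\gamma^2 x)\cdot\#\{k:K_k\subset\Omega_i\}$, with no further form-space conversion required.

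The one piece of bookkeeping I expect to demand care is in the Dirichlet case, where two separate defects are in principle available (one from the $\mathcal F'_{\Omega_i}$-to-$\mathcal F_{\Omega_i,0}$ passage at the parent level, and one from the aggregated $\mathcal F_{\Omega_{T(\eta)},0,0}$-to-$\mathcal F_{\Omega_{T(\eta)},0}$ passage summed across children), yet the statement only allows the single bound $M$ on each side. The resolution is to verify that only one defect is active per inequality: the upper bound $\rho^{\Omega_i}_D(x)\leq\text{sum}+M$ spends its $M$ at the parent level, because the child-level conversion $\rho(\gamma^2 x;\mathcal E_{\Omega_{T(\eta)}},\mathcal F_{\Omega_{T(\eta)},0,0})\leq\rho^{\Omega_{T(\eta)}}_D(\gamma^2 x)$ then goes in the favourable direction; and the lower bound $\text{sum}\leq\rho^{\Omega_i}_D(x)+M$ spends its $M$ at the summed child level, because the parent-level inequality $\rho(x;\mathcal E_{\Omega_i},\mathcal F'_{\Omega_i})\leq\rho^{\Omega_i}_D(x)$ is itself defect-free. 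This asymmetric use of the two ends of \eqref{DBCineq} is the main, and essentially only, non-mechanical point of the argument; once it is made explicit, the two displayed chains of inequalities follow by direct substitution.
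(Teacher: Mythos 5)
Your proposal is correct and is exactly the paper's argument: the paper derives the corollary by "combining Lemma \ref{lemmaKL} and \eqref{DBCineq}", and your careful accounting of which defect (parent-level $\mathcal F'_{\Omega_i}\subset\mathcal F_{\Omega_i,0}$ versus summed child-level $\mathcal F_{\Omega_{T(\eta)},0,0}\subset\mathcal F_{\Omega_{T(\eta)},0}$, each bounded by $M=N\cdot\#V_0$) is active on each side of the Dirichlet inequality, together with the defect-free treatment of the Neumann case via the third line of \eqref{DBCineq}, is precisely the bookkeeping the paper leaves implicit.
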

To simplify notations, for $1\leq i\leq P$ and $*=D$ or $N$, we denote $s_i=\#\{k: K_k\subset\Omega_i\}$, $c_i=\nu(\Omega_i)$, and write
\begin{equation*}
\begin{cases}
{\bf s}=(s_1,\ldots,s_P)^T,\\
{\bf c}=(c_1,\ldots,c_P)^T,\\
{\bf 1}=(1,\ldots,1)^T,\\
{\bf 0}=(0,\ldots,0)^T,
\end{cases}
\end{equation*}
and
\begin{equation*}
{\bf\rho}^{\Omega}_*(x)=(\rho^{\Omega_1}_*(x),\ldots,\rho^{\Omega_P}_*(x))^T.
\end{equation*}
 It follows from \eqref{genmeasureformula1} that
\begin{equation}\label{equationb}
{\bf c}=\frac1N\left(A{\bf c}+{\bf s}\right).
\end{equation}
For $*=D$ or $N$, we define \begin{equation*}
{\bf\varphi}(x)=\begin{cases}{\bf\rho}^{\Omega}_*(x)-G\Big(\frac{\log x}{2}\Big)x^{\frac{d_S}2}{\bf c} \quad &x\geq e,\\
{\bf0} \quad &0\leq x<e.
\end{cases}
\end{equation*}
\begin{lemma}\label{lemma4.3}We have
\begin{equation}\label{eqvarphi}
{\bf\varphi}(x)=A{\bf\varphi}(\gamma^2x)+O({\bf 1})\quad \text{as }x\rightarrow+\infty,
\end{equation}
where $O({\bf 1})$ stands for $O(1){\bf 1}$.
\end{lemma}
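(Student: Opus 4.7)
The plan is to derive equation \eqref{eqvarphi} directly by combining three ingredients already at our disposal: the recursive inequality from Corollary \ref{thmcor}, Kigami's asymptotic \eqref{Kigamiformula} for the counting function on $K\setminus V_0$, and the vector identity \eqref{equationb} relating $\mathbf c$, $\mathbf s$ and $A$.

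First I would rewrite the conclusion of Corollary \ref{thmcor} in vector form. Since $a_{ij}=\#\Gamma(i,j)$, the sum $\sum_{\eta\in\Gamma(i)}\rho_*^{\Omega_{T(\eta)}}(\gamma^2 x)$ is precisely the $i$-th entry of $A\,\boldsymbol\rho^{\Omega}_*(\gamma^2 x)$. The corollary therefore gives
\begin{equation*}
\boldsymbol\rho^{\Omega}_*(x)=A\,\boldsymbol\rho^{\Omega}_*(\gamma^2 x)+\rho_*(\gamma^2 x)\,\mathbf s+O(\mathbf 1)\qquad (x\to+\infty).
\end{equation*}
Next I would plug \eqref{Kigamiformula} into $\rho_*(\gamma^2 x)$. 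Using that $G$ has period $T=-\log\gamma$ (so $G(\tfrac{\log x}{2}+\log\gamma)=G(\tfrac{\log x}{2})$) together with $\gamma^{d_S}=1/N$ (which follows from $d_S=\log N/(-\log\gamma)$), the leading term collapses to
\begin{equation*}
\rho_*(\gamma^2 x)=\tfrac{1}{N}G\bigl(\tfrac{\log x}{2}\bigr)x^{d_S/2}+O(1).
\end{equation*}
Applying the identical simplification to $\boldsymbol\rho^{\Omega}_*(\gamma^2 x)$ after substituting $\boldsymbol\rho^{\Omega}_*=\boldsymbol\varphi+G(\tfrac{\log x}{2})x^{d_S/2}\mathbf c$ puts the expression in the form $A\boldsymbol\varphi(\gamma^2 x)+\tfrac{1}{N}A\mathbf c\, G(\tfrac{\log x}{2})x^{d_S/2}$ up to $O(\mathbf 1)$.

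The final step is the algebraic cancellation of the $x^{d_S/2}$ terms. On the left-hand side the leading term is $\mathbf c\,G(\tfrac{\log x}{2})x^{d_S/2}$; on the right-hand side it is $\tfrac{1}{N}(A\mathbf c+\mathbf s)\,G(\tfrac{\log x}{2})x^{d_S/2}$. These agree exactly by \eqref{equationb}, so after subtracting the common term one is left with $\boldsymbol\varphi(x)=A\boldsymbol\varphi(\gamma^2 x)+O(\mathbf 1)$, as required. The cutoff $\boldsymbol\varphi(x)=\mathbf 0$ for $x<e$ is harmless: for $x$ sufficiently large, $\gamma^2 x\ge e$ as well, so this piecewise definition does not interfere with the identity in the asymptotic regime, and for the transitional values of $x$ the term $A\boldsymbol\varphi(\gamma^2 x)$ is itself bounded and can be absorbed into $O(\mathbf 1)$. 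There is no real obstacle here; the only care needed is to keep the two simplifications (the $T$-periodicity of $G$ and the scaling $\gamma^{d_S}=1/N$) synchronized with the renormalization identity $N\mathbf c=A\mathbf c+\mathbf s$, which is exactly what makes the leading-order cancellation work.
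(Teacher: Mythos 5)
Your proposal is correct and follows essentially the same route as the paper: vectorize Corollary \ref{thmcor}, substitute \eqref{Kigamiformula} using the $T$-periodicity of $G$ together with $\gamma^{d_S}=1/N$, and cancel the leading $x^{d_S/2}$ terms via \eqref{equationb}. Your extra remark about the cutoff at $x<e$ is a harmless clarification the paper leaves implicit.
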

\begin{proof}
By Corollary \ref{thmcor}, we have
\begin{equation*}
{\bf\rho}^{\Omega}_*(x)=A{\bf\rho}^{\Omega}_*(\gamma^2x)+\rho_*(\gamma^2x){\bf s}+O({\bf 1})\quad \text{as }x\rightarrow+\infty.
\end{equation*}
Combining this with \eqref{Kigamiformula} and \eqref{equationb}, we obtain that as $x\rightarrow+\infty$,
\begin{align*}
{\bf\varphi}(x)&={\bf\rho}^{\Omega}_*(x)-G\Big(\frac{\log x}{2}\Big)x^{\frac{d_S}2}{\bf c}\\
&=A{\bf\rho}^{\Omega}_*(\gamma^2x)+\rho_*(\gamma^2x){\bf s}-\frac1NG\Big(\frac{\log x}{2}\Big)x^{\frac{d_S}2}(A{\bf c}+{\bf s})+O({\bf 1})\\
&=A\Big({\bf\rho}^{\Omega}_*(\gamma^2x)-G\Big(\frac{\log (\gamma^2x)}{2}\Big)(\gamma^2x)^{\frac{d_S}2}{\bf c}\Big)+O({\bf 1})\\
&=A{\bf\varphi}(\gamma^2x)+O({\bf 1}),
\end{align*}
where in the third equality we use the facts that $\gamma^{d_S}=\frac1N$ and $G$ is $T$-periodic ($T=-\log\gamma$).
\end{proof}

\noindent {\bf Proof of Theorem \ref{thm-irreducible}.}
{\it Case 1. $\Psi(A)=1$.} We claim that in this case, $A$ is a permutation of the identity matrix. Since $A$ is irreducible, $A$ can not have zero columns. Recall that $\bf b$ is a right $1$-eigenvector of $A$. By summing up all the entries in both sides of ${\bf b}=A{\bf b}$, we see that each row or column of $A$ is a unit vector with one entry $1$ and others zero.

This gives that each $\Omega_i$ is of the form $K\setminus\{p\}$ for a singleton $p\in K$. We note that in this case $\nu(\Omega_i)=1$ for each $i\in\mathcal A$.
By \eqref{Kigamiformula}, as $x\rightarrow+\infty$,
\begin{equation*}
{\rho}^{\Omega_i}_{*}(x)=G\Big(\frac{\log x}2\Big)x^{\frac{d_S}2}+O(1)\quad\text{for }i=1,\ldots,P.
\end{equation*}


\medskip

{\it Case 2. $\Psi(A)>1$.} By $\Psi(A)<N$, we have
\begin{equation*}
d=\frac{\log\Psi(A)}{-\log\gamma}\in(0,d_S).
\end{equation*}
For $*=D$ or $N$, let us introduce two vectors of functions on $\mathbb R$:
\begin{equation}\label{equfphi}
\begin{cases}
{\bf f}(t)=e^{-dt}{\bf\varphi}(e^{2t}),\\
{\bf z}(t)=e^{-dt}\big({\bf\varphi}(e^{2t})-A{\bf\varphi}(\gamma^2e^{2t})\big).
\end{cases}
\end{equation}
We can check that
\begin{equation*}
{\bf f}(t)=\tilde A{\bf f}(t-T)+{\bf z}(t),
\end{equation*}
where $T=-\log\gamma$.
By Lemma \ref{lemma4.3}, we see that ${\bf z}(t)=e^{-dt}O({\bf1})$ as $t\rightarrow+\infty$ and ${\bf z}(t)={\bf 0}$ for $t<\frac12$.

By a corollary of a vector-valued renewal theorem (see Corollary \ref{thm7.3} in the Appendix), we have
\begin{equation*}
\lim_{t\rightarrow+\infty}\left(\left(
                                                                           \begin{array}{c}
                                                                             f_1(t+t_{11}T)\\
                                                                            \ldots \\
                                                                            f_P(t+t_{P1}T)\\
                                                                           \end{array}
                                                                         \right)
-\varrho B\sum_{k\in\mathbb Z}\left(\begin{array}{c}
                                                                             z_1(t+t_{11}T+k\varrho T)\\
                                                                            \ldots \\
                                                                            z_P(t+t_{P1}T+k\varrho T)\\
                                                                           \end{array}
                                                                         \right)\right)=0
\end{equation*}
with the matrix $B=\frac1T{\bf b}{{\bf d}^T}$, where $\bf d$ is the unique positive left $1$-eigenvector of $\tilde A$ such that ${{\bf d}^T}{\bf b}=1$.

Define $G_*(t):=\frac{\varrho}T{{\bf d}^T}\sum_{k\in\mathbb Z}\left(\begin{array}{c}
                                                                             z_1(t+t_{11}T+k\varrho T)\\
                                                                            \ldots \\
                                                                            z_P(t+t_{P1}T+k\varrho T)\\
                                                                           \end{array}
                                                                         \right)$ on $\mathbb R$. Then ${G}_*$ is a bounded $\varrho T$-periodic function satisfying
\begin{equation}\label{eqeq1}
\lim_{t\rightarrow+\infty}\big(f_i(t+t_{i1}T)-b_iG_*(t)\big)=0,\quad i=1,\ldots,P.
\end{equation}
Changing $f_i$ back to $\varphi_i$ and $t$ back to $x$ in \eqref{eqeq1} through \eqref{equfphi}, we obtain
\begin{equation*}
\varphi_i(x)=b_iG_*\Big(\frac{\log x}{2}-t_{i1}T\Big)x^{\frac d2}+o(x^{\frac d2}), \quad\text{ as }x\rightarrow+\infty.
\end{equation*}
Hence by the definition of $\varphi_i$, we have
\begin{equation*}
\rho^{\Omega_i}_*(x)=c_iG\Big(\frac{\log x}{2}\Big)x^{\frac{d_S}2}+b_iG_*\Big(\frac{\log x}{2}-t_{i1}T\Big)x^{\frac d2}+o(x^{\frac d2}),\quad\text{ as }x\rightarrow+\infty,
\end{equation*}
which proves \eqref{eqsecorder}.
\hspace{11cm} $\square$
%


\section { Weyl-Berry asymptotics: the general case}\label{sec 5}
\setcounter{equation}{0}\setcounter{theorem}{0}

We then turn to consider the general case that the incidence  matrix $A$ might be reducible.
For $i,j\in \mathcal A$ (allowing $i=j$), we say that $i$ {\it has access to} $j$, denoted as $i\rightarrow j$, if there is an admissible word ${\bf\eta}\in \Gamma_*$ such that $I({\bf\eta})=i$ and $T({\bf\eta})=j$. For two non-empty sets $I,J\subset \mathcal A$, write $I\rightarrow J$ if there exist $i\in I$ and $j\in J$ such that $i\rightarrow j$.

We say that $i$ and $j$ {\it communicate}, denoted as $i\leftrightarrow j$, if $i\rightarrow j$ and $j\rightarrow i$. We call a non-empty subset $J\subset\mathcal A$ a (communicating) {\it class} if for any $i,j\in J$ and $k\in \mathcal A\setminus J$, $i\leftrightarrow j$ but $i\nleftrightarrow k$.
In this way, $\mathcal A$ is separated into classes and singletons that do not belong to any class. Note that a class may also be a singleton. Also, since we assume that each $\Gamma(i)\neq \emptyset$, $\mathcal A$ has at least one class. Further, any class $J$ induces a strongly connected subgraph of $(\mathcal A,\Gamma)$ with vertex set $J$, associated with an incidence matrix $A_J$, a submatrix of $A$. For simplicity, we refer to the spectral radius of $A_J$ as the spectral radius of $J$.

If a class $J$ has a spectral radius equal to $\Psi(A)$, then we call $J$ a {\it basic class}. Basic classes can further be separated based on different heights. Precisely, we call a collection of basic classes $\{J_1,J_2,\ldots,J_n\}$ a {\it basic chain} if $J_k\rightarrow J_{k+1}$ for any $1\leq k\leq n-1$. We refer to $n$ as the {\it length} of this basic chain. A basic class $J$ is said to have {\it height} $m$ (for integer $m\geq0$) if $m+1$ is the maximal length of all basic chains beginning with $J$. For $m\geq0$, denote by $\mathcal S_m$ the collection of basic classes with height $m$. We define $\mathcal S=\cup_{m\geq0}\mathcal S_m$.

For each basic class $J$ and $i\in J$, let $\mathcal G_{J,i}$ be the subgroup of $\mathbb Z$ generated by $\{k\geq1:\ A_J^k(i,i)>0\}$, and let $t_i(J)\geq1$ be the generator of $\mathcal G_{J,i}$. Let $\varrho_J$ be the greatest common divisor of $\{t_i(J)\}_{i\in J}$. For $j\in\mathcal A$, if $j\rightarrow\mathcal S$,
define
\begin{equation}\label{defmj}
m_j=\max\{\text{ height of }J:\ i\rightarrow J,J\in\mathcal S\},
\end{equation}
and
\begin{equation}\label{defrhoj}
\varrho_j=\text{ the least common multiple of }\{\varrho_J:\ j\rightarrow J,J\in\mathcal S\}.
\end{equation}
Denote $d=\frac{\log\Psi(A)}{-\log\gamma}$ as before. Note that when $d=0$, all classes are basic classes and $j\rightarrow\mathcal S$ for all $j\in \mathcal A$.



\begin{theorem}\label{reducible}
Let $j\in\mathcal A$, and $G$ be the same function as in \eqref{Kigamiformula}.

(1). Assume $d>0$. If $j\rightarrow\mathcal S$, then there exist two $\varrho_j T$-periodic functions $G_{j,*}$ for $*=D$ or $N$ such that as $x\rightarrow+\infty$,
\begin{equation*}
\rho^{\Omega_j}_*(x)=\nu(\Omega_j)G\Big(\frac{\log x}{2}\Big)x^{\frac{d_S}2}+G_{j,*}\Big(\frac{\log x}{2}\Big)x^{\frac{d}2}(\log x)^{m_j} +o\Big(x^{\frac{d}2}(\log x)^{m_j}\Big).
\end{equation*}

(2). Assume $d>0$. If $j\nrightarrow\mathcal S$, then as $x\rightarrow+\infty$,
\begin{equation*}
\rho^{\Omega_j}_*(x)=\nu(\Omega_j)G\Big(\frac{\log x}{2}\Big)x^{\frac{d_S}2}+o(x^{\frac d2}).
\end{equation*}

(3). Assume $d=0$.  Then as $x\rightarrow+\infty$,
\begin{equation*}
\rho^{\Omega_j}_*(x)=\nu(\Omega_j)G\Big(\frac{\log x}{2}\Big)x^{\frac{d_S}2}+O\Big((\log x)^{m_j}\Big).
\end{equation*}
\end{theorem}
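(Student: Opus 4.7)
My plan is to reuse as much of the Section~4 machinery as possible and then replace the irreducible vector-valued renewal theorem by a reducible (Jordan-block) version. The starting point is unchanged: Corollary~\ref{thmcor} gives
\[
\bm{\rho}^{\Omega}_{*}(x) = A\,\bm{\rho}^{\Omega}_{*}(\gamma^{2}x) + \rho_{*}(\gamma^{2}x)\,\mathbf{s} + O(\mathbf{1})\quad\text{as }x\to\infty,
\]
and this identity, together with \eqref{Kigamiformula} and the consistency relation \eqref{equationb}, yields Lemma~\ref{lemma4.3}, namely $\bm{\varphi}(x)=A\,\bm{\varphi}(\gamma^{2}x)+O(\mathbf{1})$, with $\bm{\varphi}$ defined as in Section~4 and depending on the boundary condition $*$. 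The change of variables $\mathbf{f}(t)=e^{-dt}\bm{\varphi}(e^{2t})$, $\mathbf{z}(t)=e^{-dt}(\bm{\varphi}(e^{2t})-A\bm{\varphi}(\gamma^{2}e^{2t}))$ then converts this into the vector renewal equation $\mathbf{f}(t)=\tilde{A}\mathbf{f}(t-T)+\mathbf{z}(t)$ with $\mathbf{z}(t)=e^{-dt}O(\mathbf{1})$ for $t\to\infty$ and $\mathbf{z}(t)=\mathbf{0}$ for $t<\tfrac{1}{2}$. Nothing in this reduction used irreducibility of $A$.

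Next I would order the classes of $\mathcal{A}$ according to the partial order induced by $\to$, so that $A$ (and hence $\tilde{A}$) is block upper triangular with diagonal blocks $A_{J}$, one for each class $J$. The reducible renewal theorem from the appendix (Section~\ref{sec 7}) will then be applied separately to each diagonal block, working downward from top to bottom with respect to the class ordering. For a non-basic class $J$ (that is, $\Psi(A_{J})<\Psi(A)$), the restricted iteration has spectral radius strictly less than~$1$, so $f_{j}(t)$ is an exponentially contracting functional of inputs plus a summable residual, and I read off $f_{j}(t)\to 0$ for $j\in J$ provided the ``inputs'' from classes accessed by $J$ are themselves $o(1)$; this handles assertion~(2), since in that case $j\nrightarrow\mathcal S$ means every chain issuing from $j$ stays in non-basic classes. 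For a basic class $J\in\mathcal S_{0}$ (accessing no other basic class), the diagonal block $\tilde{A}_{J}$ is an irreducible stochastic-like matrix, the $\mathbf{z}$-input restricted to the coordinates of $J$ is exponentially small, and the exact argument that produced Theorem~\ref{thm-irreducible} gives a bounded $\varrho_{J}T$-periodic limit $G_{J,*}$ on each coordinate $j\in J$.

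The induction then climbs by height. Suppose the conclusion of Theorem~\ref{reducible} has been established for every $j$ that lies in or has access only to basic classes of height at most $m-1$. For $j$ in a basic class $J\in\mathcal S_{m}$, the off-diagonal blocks of $\tilde{A}$ feed into the equation for $\mathbf{f}|_{J}$ an inhomogeneity of the form (periodic function)$\cdot(\log x)^{m-1}$ after the change of variable, because by induction the coordinates corresponding to the classes $J$ accesses are of that order. Now the block $\tilde{A}_{J}$ has Perron eigenvalue~$1$, so the usual renewal resonance turns a $t^{m-1}$-type inhomogeneity into a $t^{m}$-type solution; the vector-valued renewal theorem of Section~\ref{sec 7} is precisely designed to produce the factor $(\log x)^{m_{j}}$ and a bounded $\varrho_{j}T$-periodic function $G_{j,*}$, where the period $\varrho_{j}$ is the least common multiple of the periods $\varrho_{J}$ of the basic classes accessible from $j$ (cf.\ \eqref{defrhoj}). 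For a general vertex $j\in\mathcal A$, not necessarily in a basic class but with $j\to\mathcal S$, the block upper-triangular iteration expresses $f_{j}(t)$ as a finite sum of convolutions of the $f_{i}(t)$ for $i$ in the classes accessed by $j$, plus an exponentially contracting piece; the dominant contribution comes from the basic class of maximal height $m_{j}$ accessible from $j$, which gives the leading $(\log x)^{m_{j}}$ behaviour of assertion~(1). Finally, the degenerate case $d=0$ (equivalently $\Psi(A)=1$, so every class is basic and has periodic structure forced to be trivial in the $x$-scale) follows the same block-by-block argument with $e^{-dt}=1$; the renewal theorem then delivers only the bound $O((\log x)^{m_{j}})$ claimed in~(3).

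The main obstacle is the third paragraph: controlling the polynomial-in-$\log x$ growth caused by the Jordan structure of $\tilde A$ at its spectral radius. Each basic chain $J_{1}\to J_{2}\to\cdots\to J_{n}$ is effectively a Jordan block of size $n$ at eigenvalue~$1$ for $\tilde A$, and the correct bookkeeping of how these blocks combine across coordinates with distinct periods $\varrho_{J_{k}}$, while simultaneously absorbing the $O(\mathbf{1})$ error from Lemma~\ref{lemma4.3}, is what the appendix renewal theorem must handle; once that theorem is available, the inductive climb by height is essentially bookkeeping.
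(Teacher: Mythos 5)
Your reduction to the vector renewal equation $\mathbf f(t)=\tilde A\mathbf f(t-T)+\mathbf z(t)$ is exactly the paper's, and for assertions (1) and (2) your overall strategy is workable, but only because the heavy lifting is done by the general (reducible) renewal theorem, Theorem \ref{thmHN}. The paper simply applies that theorem once to the full system; your block-upper-triangular induction by height is essentially a re-derivation of it, and as sketched it has a soft spot: when you climb to a basic class $J$ of height $m$, the effective inhomogeneity $\tilde{\mathbf z}_J(t)=U\mathbf f_I(t-T)+\mathbf z_J(t)$ inherits the $t^{m-1}$-growth of the lower coordinates, so it is \emph{not} directly Riemann integrable and the irreducible results (Theorem \ref{thmLWC2}, Corollary \ref{thm7.3}) cannot be invoked for that block. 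Your "renewal resonance turns $t^{m-1}$ into $t^{m}$" step is precisely the content of Theorem \ref{thmHN} and cannot be treated as bookkeeping; the clean route is to cite that theorem directly for both (1) (giving the $\varrho_jT$-periodic $G_{j,*}$ and the factor $(\log x)^{m_j}$) and (2) (giving $f_j(t)\to0$).

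The genuine gap is in case (3). When $d=0$ you have $\mathbf z(t)=\bm\varphi(e^{2t})-A\bm\varphi(\gamma^2e^{2t})=O(\mathbf 1)$ with no decay, so $\mathbf z$ is not directly Riemann integrable and \emph{no} renewal theorem in the appendix applies; your claim that "the renewal theorem then delivers only the bound $O((\log x)^{m_j})$" is unsupported. The paper replaces the renewal theorem here by an elementary induction on the height: for $J\in\mathcal S_{m+1}$ one iterates to get $\mathbf f_J(t)=\sum_{k=0}^{[t/T]}A_J^k\tilde{\mathbf z}_J(t-kT)$ with $\tilde{\mathbf z}_J(t)=O(t^{m})$ by the inductive hypothesis, and then uses the Ces\`aro convergence $\frac1n\sum_{k=0}^nA_J^k\to M_J$ (valid since $\Psi(A_J)=1$) to bound the sum by $O(t^{m+1})$; vertices outside $\mathcal S$ are handled by expressing $f_j$ as a finite linear combination of translates of the $f_i$ with $i\in\mathcal S$ and of the $z_{i'}$. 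You need to supply this (or an equivalent) argument for (3); without it that case is unproved.
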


\noindent{\bf Remark.}
{\it In fact, in Case (2), we can still obtain an exact second-order term of $\rho^{\Omega_j}_*(x)$ by considering the classes that $j$ has access to. To be precise, it suffices to consider the subgraph induced by $(\mathcal A,\Gamma)$ on the subset $\{j\}\cup\{i\in \mathcal A: j\rightarrow i\}$ , which falls under Case (1) or Case (3). In Case (3), we are not able to obtain a periodic function for the second-order term.}

\begin{proof}
We first assume $d>0$. We define $\bf f$ and $\bf z$ to be the same as in \eqref{equfphi}. Then $\bf f$ satisfies the vector-valued
renewal equation ${\bf f}(t)=\tilde A{\bf f}(t-T)+{\bf z}(t)$ but with $\tilde A$ not necessarily irreducible.

If $j\rightarrow \mathcal S$, by applying Theorem \ref{thmHN}, there exists a $\varrho_j T$-periodic function $G_{j,*}$ such that as $t\rightarrow+\infty$,
\begin{equation*}
f_j(t)=(2t)^{m_j}G_{j,*}(t)+o(t^{m_j}).
\end{equation*}
Equivalently, as $x\rightarrow+\infty$,
\begin{equation*}
\rho^{\Omega_j}_*(x)=\nu(\Omega_j)G\Big(\frac{\log x}2\Big)x^{\frac{d_S}2}+G_{j,*}\Big(\frac{\log x}2\Big)x^{\frac d2}(\log x)^{m_j}+o\Big( x^{\frac{d}2}(\log x)^{m_j}\Big),
\end{equation*}
which proves Case (1).

If $j\nrightarrow \mathcal S$, then, still by applying Theorem \ref{thmHN}, we obtain
\begin{equation*}
\lim_{t\rightarrow+\infty}f_j(t)=0,
\end{equation*}
or equivalently, as $x\rightarrow+\infty$,
\begin{equation*}
\rho^{\Omega_j}_*(x)=\nu(\Omega_j)G\Big(\frac{\log x}2\Big)x^{\frac{d_S}2}+o(x^{\frac d2}),
\end{equation*}
which proves Case (2).

\medskip

We then prove Case (3), i.e. $d=0$. It suffices to prove that $f_j(t)=O(t^{m_j})$ as $t\rightarrow+\infty$.

In this case, all classes have spectral radius $1$, and hence all classes are basic classes. We prove the result by induction on $m_j$.

If $m_j=0$, then by Theorem \ref{thm-irreducible} (applying $d=0$), we have for $i\in \mathcal S_0$, $f_i(t)=O(1)$ as $t\rightarrow+\infty$. If $j\notin\mathcal S_0$, then we can write $f_j(t)$ as a finite linear combination of $f_i(t-kT)$ and $z_{i'}(t-k'T)$ with $i\in\mathcal S_0$, $i'\in\mathcal A$, and $k,k'\in\mathbb Z$. Consequently, $f_j(t)=O(1)$ as $t\rightarrow+\infty$.

Inductively, for $m\geq0$, assume for all $i$ with $m_i\leq m$, it holds that $f_i(t)=O(t^{m_i})$ as $t\rightarrow+\infty$. Consider a class $J\in\mathcal S_{m+1}$. Let $I=\{k\in \mathcal A\setminus J: J\rightarrow k\}$. Clearly, for each $k\in I$, $m_k\leq m$. Denote $U$ to be the sub-matrix of $A$ associated with the accesses from $J$ to $I$. Without loss of generality, assume $J=\{1,\ldots,s\}$. Denote ${\bf f}_1=(f_1(t),\ldots,f_s(t))^T$ and ${\bf z}_1=(z_1(t),\ldots,z_s(t))^T$. Also, denote ${\bf f}_2(t)$ as the vector of functions associated with $I$. We have
\begin{equation*}
{\bf f}_1(t)=A_{J}{\bf f}_1(t-T)+\tilde{\bf z}_1(t),
\end{equation*}
with $\tilde{\bf z}_1(t):=U{\bf f}_2(t-T)+{\bf z}_1(t)$. Iteratively, we have
\begin{equation*}
{\bf f}_1(t)=\sum_{k=0}^{[t/T]}A_{J}^k\tilde{\bf z}_1(t-kT),
\end{equation*}
where we use the fact that $\tilde{\bf z}_1(t)={\bf 0}$ for $t\leq0$.

Since each entry of $\tilde{\bf z}_1(t)$ is of order $O(t^m)$ as $t\rightarrow+\infty$, using the fact that $\frac1n\sum_{k=0}^{n}{A}^k_{J}\rightarrow M_{J}$, as $n\rightarrow +\infty$ for some matrix $M_{J}$, we have
\begin{equation*}
{\bf f}_1(t)=\Big[\frac tT\Big]\cdot\frac1{[\frac tT]}\sum_{k=0}^{[t/T]}A_{J}^k\tilde{\bf z}_1(t-kT)=O(t^{m+1}){\bf 1}, \qquad t\rightarrow+\infty,
\end{equation*}
which proves Case (3) for $j\in \mathcal S$.

If $j\notin \mathcal S$, then we can write $f_j(t)$ as a finite linear combination of $f_i(t-kT)$ and $z_{i'}(t-k'T)$ with $i\in\mathcal S$, $i'\in\mathcal A$, and $k,k'\in\mathbb Z$, which still implies $f_j(t)=O(t^{m_j})$ as $t\rightarrow+\infty$. This completes the proof of Case (3).

\end{proof}

\section{Examples}\label{sec 6}
 \setcounter{equation}{0}\setcounter{theorem}{0}
In this section, we present several examples to  illustrate Theorems  \ref{thm-irreducible} and  \ref{reducible}, as well as some further remarks.

\subsection{Example: Sierpi{\'n}ski gasket}\label{subsec6.1}
Let $p_1=(0,0)$, $p_2=(1,0)$, $p_3=(\frac12,\frac{\sqrt{3}}2)$ be the three vertices of an equilateral triangle in $\mathbb R^2$. Let $K$ be the {\it Sierpi{\'n}ski gasket} in $\mathbb R^2$, generated by the IFS $\{F_i\}_{i=1}^3$ defined by $F_i(x)=\frac 12 (x-p_i)+p_i$ for $i=1,2,3$, and let $V_0=\{p_1,p_2,p_3\}$. Let $\mu$ be the $\frac{\log3}{\log2}$-dimensional Hausdorff measure on $K$.
The standard Dirichlet form $(\mathcal{E},\mathcal{F})$ on $L^2(K,\mu)$  satisfies the self-similar identity \cite{K1}, with $r_i=\frac35$ for $i=1,2,3$,
\begin{equation*}
\mathcal E[u]=\frac53\sum_{i=1}^3\mathcal E[u\circ F_{i}], \quad \forall u\in \mathcal{F}.
\end{equation*}
Then $\gamma_i=\gamma=\frac1{\sqrt{5}}$, $T=\frac{\log 5}2$ and $d_S=\frac{2\log 3}{\log5}$.
In this subsection, we consistently use $G$ to represent the $\frac{\log 5}2$-periodic function in Theorem \ref{thm-KL}, which is bounded, positive (away from zero), right-continuous.

Arbitrarily pick two distinct points $p,q$ in $V_*=\bigcup_{|\omega|=0}^{\infty}F_{\omega}(V_0)$. Let $L$ denote the straight line passing through $p$ and $q$. The line $L$ separates the plane into two disjoint (open) parts, say $H_1$ and $H_2$.
As established in our previous work \cite[Proposition 7.2]{GQ}, both $H_1\cap K$ and $H_2\cap K$ (if non-empty) are BGD domains, see Figure \ref{figureSGcut}. So we can apply our results to compute the spectral asymptotics of the Laplacians on these domains.

For simplicity, we illustrate two particular situations.

\medskip

{\bf1.} $ p=p_1$, $ q=p_2$. Consider the open set $\Omega=K\setminus\overline{p_1p_2}$, see Figure \ref{figsg1}.
\begin{figure}[h]
	\includegraphics[width=6cm]{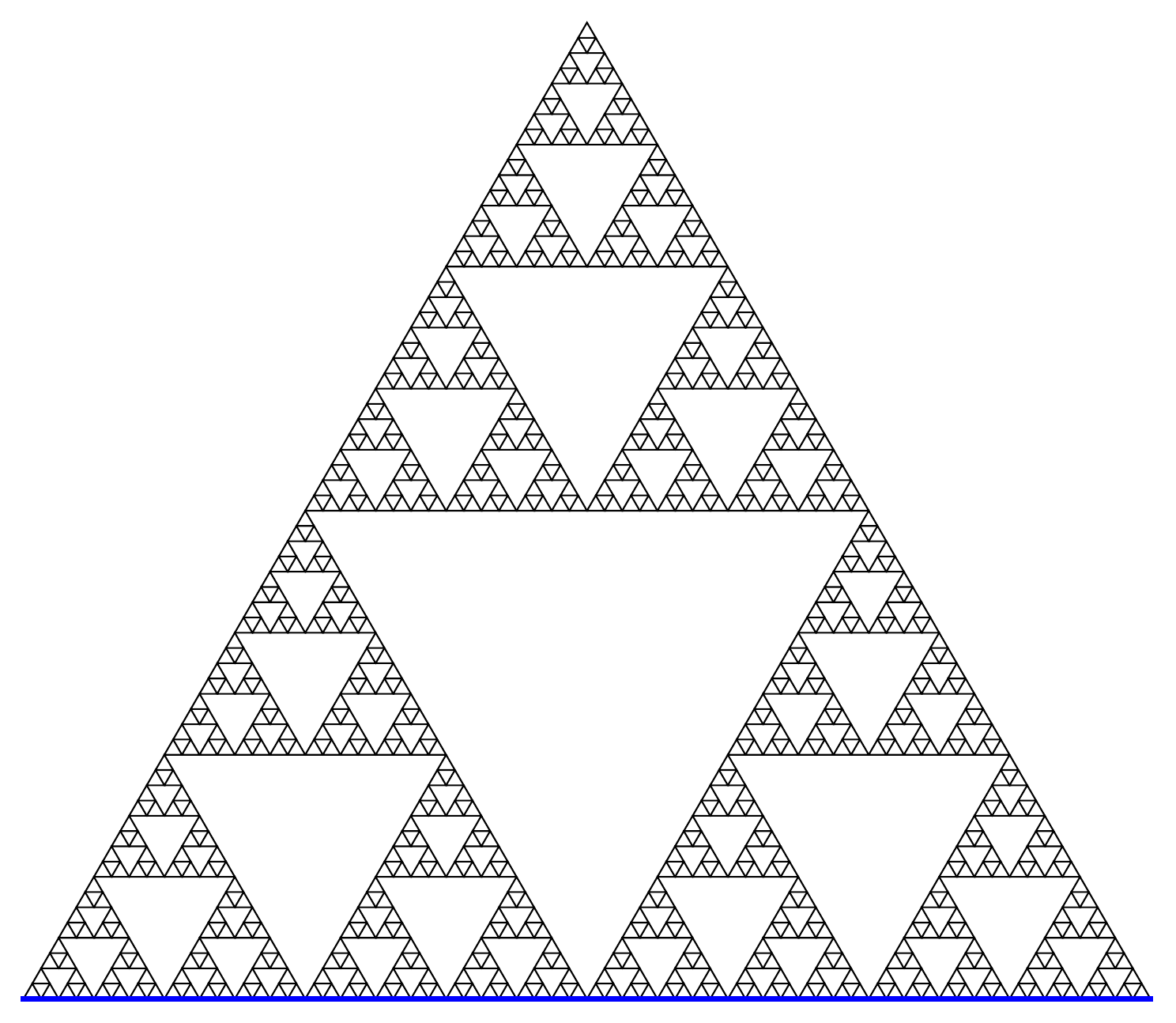}
	\caption{$\Omega$ in Example \ref{subsec6.1}-1} \label{figsg1}
\end{figure}

 Recall that the exact spectrum of the Laplacian on $\Omega$ with either Dirichlet or Neumann boundary conditions on $\overline{p_1p_2}$ (strictly speaking, on the resistance boundary $\widetilde\Omega\setminus\Omega$, where $\widetilde\Omega$ is the completion of $\Omega$ under the resistance metric, see \cite[Section 4]{GQ}) has been studied in detail by the second author in \cite{Q} using a spectral decimation method.

The Dirichlet eigenvalues are separated into three types: $\mathcal L$ for {\it localized eigenvalues} corresponding to eigenfunctions supported in $\Omega$; $\mathcal P$ for {\it primitive eigenvalues} corresponding to global supported symmetric (or skew-symmetric) eigenfunctions; $\mathcal M$ for {\it miniaturized eigenvalues} corresponding to local supported eigenfunctions generated by contracting skew-symmetric primitive eigenfunctions to the bottom of $\Omega$. Let $\rho^{\mathcal L}(x),\rho^{\mathcal P}(x)$ and $\rho^{\mathcal M}(x)$ denote the eigenvalue counting functions corresponding to the localized, primitive, and miniaturized eigenvalues, respectively.
Let $\rho_D(x)$ denote the eigenvalue counting function for the Laplacian on $K\setminus V_0$ with Dirichlet boundary conditions on $V_0$.
 It is proved in \cite{Q} that as $x\rightarrow+\infty$,
\begin{equation*}\rho_D(x)-\rho^{\mathcal L}(x)=O\Big(x^{\frac{\log 2}{\log 5}}\log x\Big),
\end{equation*}
$\rho^{\mathcal P}(x)=O\Big(x^{\frac{\log 2}{\log 5}}\Big)$, and $\rho^{\mathcal M}(x)=O\Big(x^{\frac{\log 2}{\log 5}}\log x\Big)$. Since $\rho_D^{\Omega}(x)=\rho^{\mathcal L}(x)+\rho^{\mathcal P}(x)+\rho^{\mathcal M}(x)$ and by \eqref{Kigamiformula}, $\rho_D(x)=G(\frac{\log x}2)x^{\frac{\log 3}{\log 5}}+O(1)$, one has as $x\rightarrow+\infty$,
\begin{equation}\label{eqOmegaD}
\rho_D^{\Omega}(x)=G\Big(\frac{\log x}2\Big)x^{\frac{\log 3}{\log 5}}+O\Big(x^{\frac{\log 2}{\log 5}}\log x\Big).
\end{equation}
For the Neumann boundary condition case, the spectral asymptotic of $\rho_N^{\Omega}(x)$ is similar to \eqref{eqOmegaD}.

Numerical experiments suggest that $\rho_D^{\Omega}(x)$ should have an explicit formula (see \cite[Conjecture 8.2]{Q}): there exists a bounded (away from zero) $\frac{\log 5}{2}$-periodic non-constant function $G_1$ such that as $x\rightarrow+\infty$,
\begin{equation*}
\rho_D^{\Omega}(x)=G\Big(\frac{\log x}2\Big)x^{\frac{\log 3}{\log 5}}+G_1\Big(\frac{\log x}2\Big)x^{\frac{\log 2}{\log 5}}+o\Big(x^{\frac{\log 2}{\log 5}}\Big).
\end{equation*}

By applying Theorem \ref{thm-irreducible}, we can nearly confirm the above conjecture affirmatively. We can establish the existence of a bounded $\frac{\log 5}{2}$-periodic function $G_1$, and confirm that $G_1\leq 0$ and is not identically zero. However, whether $G_1$ is non-constant and bounded away from zero remains unknown.

 It is clear that $\Omega$ satisfies the BGD condition with $\mathcal A=\{1\}$ containing only one element and $\Gamma=\{\eta_1,\eta_2\}$ consisting of two directed edges from $\Omega$ to itself. Then $\nu(\Omega)=\nu(K)=1$ and the $1\times 1$ matrix $A$ is $2$, hence we are in the irreducible case with $d=\frac{2\log2}{\log5}$. By applying Theorem \ref{thm-irreducible}, we have for $*=D$ or $N$, there exists a bounded $\frac{\log 5}2$-periodic function $G_*$ such that as $x\rightarrow+\infty$,
\begin{equation}\label{rhostarsgdomain}
\rho^{\Omega}_*(x)=G\Big(\frac{\log x}{2}\Big)x^{\frac{\log3}{\log5}}+G_{*}\Big(\frac{\log x}{2}\Big) x^{\frac{\log2}{\log5}}+O(1),
\end{equation}
where we improve the term $o\Big(x^{\frac{\log2}{\log5}}\Big)$ in Theorem \ref{thm-irreducible} to $O(1)$ using the same argument as in the proof of Theorem \ref{KigamiJFA}, noticing that the matrix $A$ is now simply a real number.

By the max-min formula for eigenvalues (e.g. \cite[formula (2.15)]{La}), we observe that $\rho_D^\Omega(x)\leq \rho_D(x)\leq \rho_N(x)\leq \rho_N^{\Omega}(x)$. Since both $\rho_D(x)$ and $\rho_N(x)$ have the asymptotic behavior $G\Big(\frac{\log x}{2}\Big)x^{\frac{\log 3}{\log 5}}+O(1)$ as $x\rightarrow+\infty$, it follows that
$G_D\leq0$ and $G_N\geq0$.

Let
\begin{equation*}
Z^{\Omega}_D(t)=\int_0^{+\infty}e^{-tx}d\rho^{\Omega}_{D}(x),\quad t>0,
\end{equation*}
denote the {\it spectral partition function} of the Dirichlet Laplacian on $\Omega$. By applying Kajino's result \cite[Theorem 3.19]{Ka2} (with $m=1$ and $X=\{1,2\}$), there exist three positive, bounded, $\frac{\log5}2$-periodic, continuous functions $\hat G$, $\hat G_D$ and $\hat G_0$ such that as $t\rightarrow0+$,
\begin{equation}\label{Kajinoresult}
Z^{\Omega}_D(t)=\hat G\Big(-\frac{\log t}{2}\Big)t^{-\frac{\log3}{\log 5}}-\hat G_D\Big(-\frac{\log t}{2}\Big)t^{-\frac{\log 2}{\log 5}}+\hat G_0\Big(-\frac{\log t}{2}\Big)+O\left(\exp\Big(-ct^{-\frac{\log 2}{\log 5-\log 2}}\Big)\right).
\end{equation}
We establish the following relations between $G$ and $\hat G$, and between $G_D$ and $\widetilde G_D$:
\begin{proposition}\label{thm6.1}
\begin{align}
\hat G(x)&=\int_0^{+\infty} G\Big(\frac{\log\xi}{2}+x\Big)\xi^{\frac{\log 3}{\log 5}}e^{-\xi}d\xi,\label{eqGid}\\
\hat G_D(x)&=-\int_0^{+\infty} G_D\Big(\frac{\log\xi}{2}+x\Big)\xi^{\frac{\log 2}{\log 5}}e^{-\xi}d\xi.\label{eqGDid}
\end{align}
In particular, $G_D$ is not identically zero.
\end{proposition}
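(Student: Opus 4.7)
The plan is to exploit the fact that $Z_D^\Omega(t)$ is the Laplace--Stieltjes transform of $\rho_D^\Omega$, so that Kajino's formula \eqref{Kajinoresult} and the asymptotic \eqref{rhostarsgdomain} describe the same analytic object from opposite ends of the real line. Matching them term by term will yield \eqref{eqGid} and \eqref{eqGDid}; the non-vanishing of $G_D$ then follows at once from the strict positivity of $\hat{G}_D$ in Kajino's theorem.

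First I would rewrite $Z_D^\Omega(t) = t\int_0^\infty e^{-tx}\rho_D^\Omega(x)\,dx$ by Stieltjes integration by parts, using that Dirichlet eigenvalues are strictly positive (so $\rho_D^\Omega(0)=0$) and that $\rho_D^\Omega(x)$ grows polynomially in $x$. I would split the integral at some fixed $R>0$; the part over $[0,R]$ contributes $O(t) = O(1)$ as $t\to 0^+$, and on $[R,\infty)$ I substitute \eqref{rhostarsgdomain}. Performing the change of variables $\xi=tx$ converts the factor $\tfrac{\log x}{2}$ into $\tfrac{\log \xi}{2}-\tfrac{\log t}{2}$ and $x^\alpha\,dx$ into $t^{-\alpha-1}\xi^\alpha\,d\xi$. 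Setting $x := -\tfrac{\log t}{2}$, this gives
\begin{equation*}
Z_D^\Omega(t) = \Phi(x)\,t^{-\frac{\log 3}{\log 5}} + \Phi_D(x)\,t^{-\frac{\log 2}{\log 5}} + O(1)\quad\text{as } t\to 0^+,
\end{equation*}
where
\begin{equation*}
\Phi(x) := \int_0^\infty G\Bigl(\tfrac{\log\xi}{2}+x\Bigr)\xi^{\frac{\log 3}{\log 5}}e^{-\xi}\,d\xi, \qquad \Phi_D(x) := \int_0^\infty G_D\Bigl(\tfrac{\log\xi}{2}+x\Bigr)\xi^{\frac{\log 2}{\log 5}}e^{-\xi}\,d\xi.
\end{equation*}
Both functions are bounded and $\tfrac{\log 5}{2}$-periodic in $x$, inherited from the periodicity of $G$ and $G_D$.

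To identify the coefficients with those in \eqref{Kajinoresult}, I would fix $x_0\in\mathbb R$ and choose a sequence $t_n\to 0^+$ satisfying $-\tfrac{\log t_n}{2}\equiv x_0 \pmod{\tfrac{\log 5}{2}}$; then the periodic coefficients on both sides evaluate at $x_0$, and both expansions reduce to $a\,t_n^{-\log 3/\log 5}+b\,t_n^{-\log 2/\log 5}+O(1)$ with three distinct growth rates. Successive division by the dominant power of $t_n$ (and letting $n\to\infty$) isolates first $\Phi(x_0)=\hat{G}(x_0)$ and then $\Phi_D(x_0)=-\hat{G}_D(x_0)$, which are precisely \eqref{eqGid} and \eqref{eqGDid} since $x_0$ was arbitrary. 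For the final assertion, Kajino's $\hat{G}_D$ is strictly positive, so \eqref{eqGDid} forces $\Phi_D\not\equiv 0$; were $G_D$ identically zero, the integrand of $\Phi_D$ would vanish, giving a contradiction.

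The main delicate point is the bookkeeping of error terms in the substitution step: one must verify that the $O(1)$ remainder in \eqref{rhostarsgdomain}, after being multiplied by $t$ and integrated against $e^{-tx}$, produces a genuinely bounded contribution (uniform in the phase $-\tfrac{\log t}{2}$), and that the tail of the integral over $[tR,\infty)$ differs from the tail over $[0,\infty)$ only by an $O(t)$ error. These are routine but need care so that the coefficient of $t^{-\log 2/\log 5}$ is unambiguously determined before comparison with Kajino's second-order term.
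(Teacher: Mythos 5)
Your proposal is correct and follows essentially the same route as the paper: Stieltjes integration by parts to get $Z_D^\Omega(t)=t\int_0^{+\infty}e^{-tx}\rho_D^\Omega(x)\,dx$, substitution of the two-term asymptotic \eqref{rhostarsgdomain}, the change of variables $\xi=tx$, and comparison with Kajino's expansion \eqref{Kajinoresult}, with non-vanishing of $G_D$ read off from the strict positivity of $\hat G_D$. The only difference is that you spell out the term-by-term identification (fixing the phase along a sequence $t_n\to 0^+$ and dividing by successive powers), a detail the paper leaves implicit in the phrase ``by comparing.''
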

\begin{proof}
Using integration by parts and noting that $\rho_D^\Omega(x)$ has polynomial growth, we obtain
\begin{equation*}
Z^{\Omega}_D(t)=t\int_0^{+\infty}e^{-tx}\rho^{\Omega}_{D}(x)dx.
\end{equation*}
Substituting \eqref{rhostarsgdomain} into the above integral, we have as $t\rightarrow0+$,
\begin{equation*}
Z^{\Omega}_D(t)
=t\int_0^{+\infty}e^{-tx}G\Big(\frac{\log x}{2}\Big)x^{\frac{\log3}{\log5}}dx+t\int_0^{+\infty}e^{-tx}G_D\Big(\frac{\log x}{2}\Big)x^{\frac{\log2}{\log5}}dx+O(1).
\end{equation*}
This simplifies to
\begin{equation*}
Z^{\Omega}_D(t)=t^{-\frac{\log3}{\log5}}\int_0^{+\infty} G\Big(\frac{\log\xi}{2}-\frac{\log t}{2}\Big)\xi^{\frac{\log 3}{\log 5}}e^{-\xi}d\xi+t^{-\frac{\log2}{\log5}}\int_0^{+\infty} G_D\Big(\frac{\log\xi}{2}-\frac{\log t}{2}\Big)\xi^{\frac{\log 2}{\log 5}}e^{-\xi}d\xi+O(1).
\end{equation*}
By comparing this with \eqref{Kajinoresult}, we deduce \eqref{eqGid} and \eqref{eqGDid}. Moreover, since $\hat G_D$ is positive, it follows that $G_D$ is not identically zero.
\end{proof}

{\bf2.} $p=p_3$, $q=\frac12(p_1+p_2)$. Let $\Omega_1=H\cap K$, where $H$ is the half-plane containing $p_1$ with boundary line passing through $p$ and $q$,  and $\Omega_2=K\setminus\{p_2\}$. Then $\{\Omega_1,\Omega_2\}$ are open sets satisfying the BGD condition with $\mathcal A=\{1,2\}$ and $\Gamma=\{\eta_1,\eta_2,\eta_3\}$, where $\eta_1$ is from $1$ to $1$, $\eta_2$ is from $1$ to $2$ and $\eta_3$ is from $2$ to $2$, see Figure \ref{figsg2}.
\begin{figure}[h]
	\includegraphics[width=4.5cm]{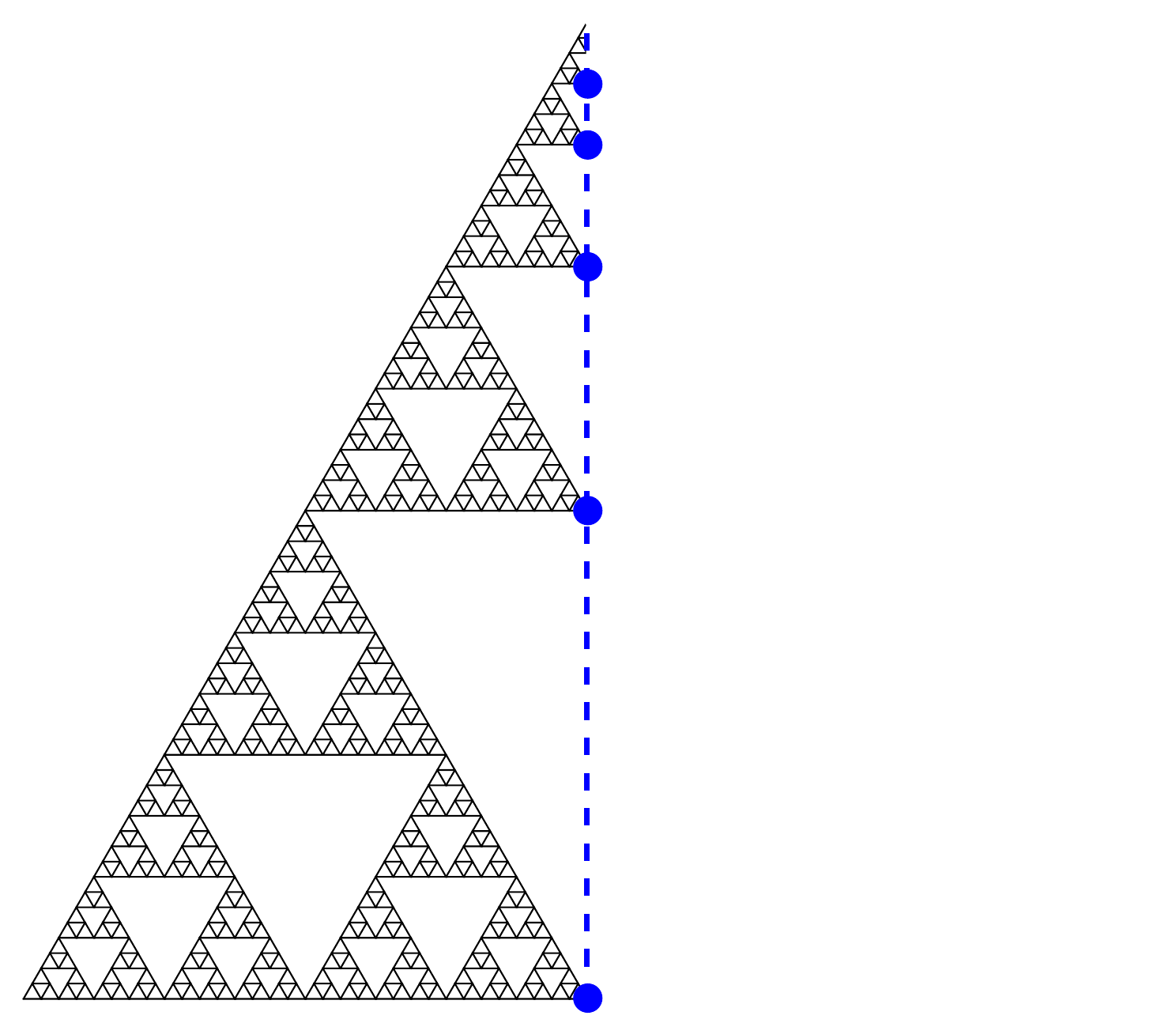}\hspace{1cm}
\includegraphics[width=4.5cm]{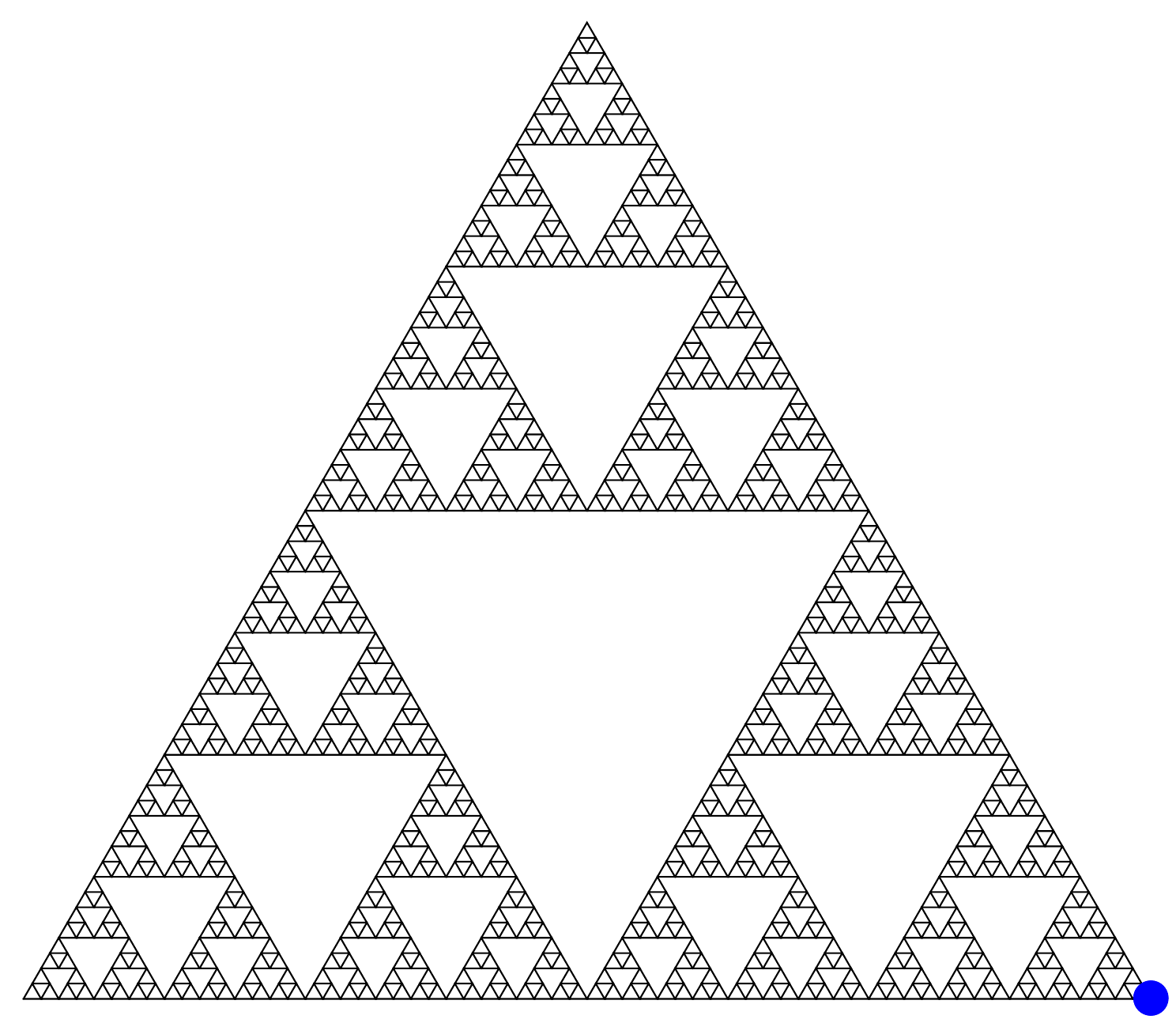}
	\caption{$\Omega_1$ and $\Omega_2$ in Example \ref{subsec6.1}-2}\label{figsg2}
\end{figure}

 Note that $A=\left(
                                                                  \begin{array}{cc}
                                                                    1 & 1 \\
                                                                    0 & 1 \\
                                                                  \end{array}
                                                                \right)
$ is reducible with spectral radius $1$, and $\{1\}$, $\{2\}$ are basic classes in $\mathcal A$ with height $1$, $0$, respectively. By Theorem \ref{reducible}-(3), we have for $*=D$ or $N$, as $x\rightarrow+\infty$,
\begin{align*}
\rho^{\Omega_1}_*(x)&=\frac12G\Big(\frac{\log x}{2}\Big)x^{\frac{\log3}{\log5}}+O(\log x),\\
\rho^{\Omega_2}_*(x)&=G\Big(\frac{\log x}{2}\Big)x^{\frac{\log3}{\log5}}+O(1).
\end{align*}

We remark that the above estimate for $\rho^{\Omega_1}_*(x)$ is sharp due to the following two formulas of Li and Strichartz \cite[Section 5]{LS}, derived via a symmetric spectral decimation argument: there exists $C_0>0$ such that
\begin{align*}
\rho^{\Omega_1}_N(C_05^m)&=\frac12\Big(\frac{3^{m+1}+3}{2}+m+1\Big),\\
\rho^{\Omega_1}_D(C_05^m)&=\frac12\Big(\frac{3^{m+1}-3}{2}-m\Big).
\end{align*}

\medskip

The following are three more examples of BGD open sets in the Sierpi{\'n}ski gasket.

\medskip
\begin{figure}[h]
	\includegraphics[width=6cm]{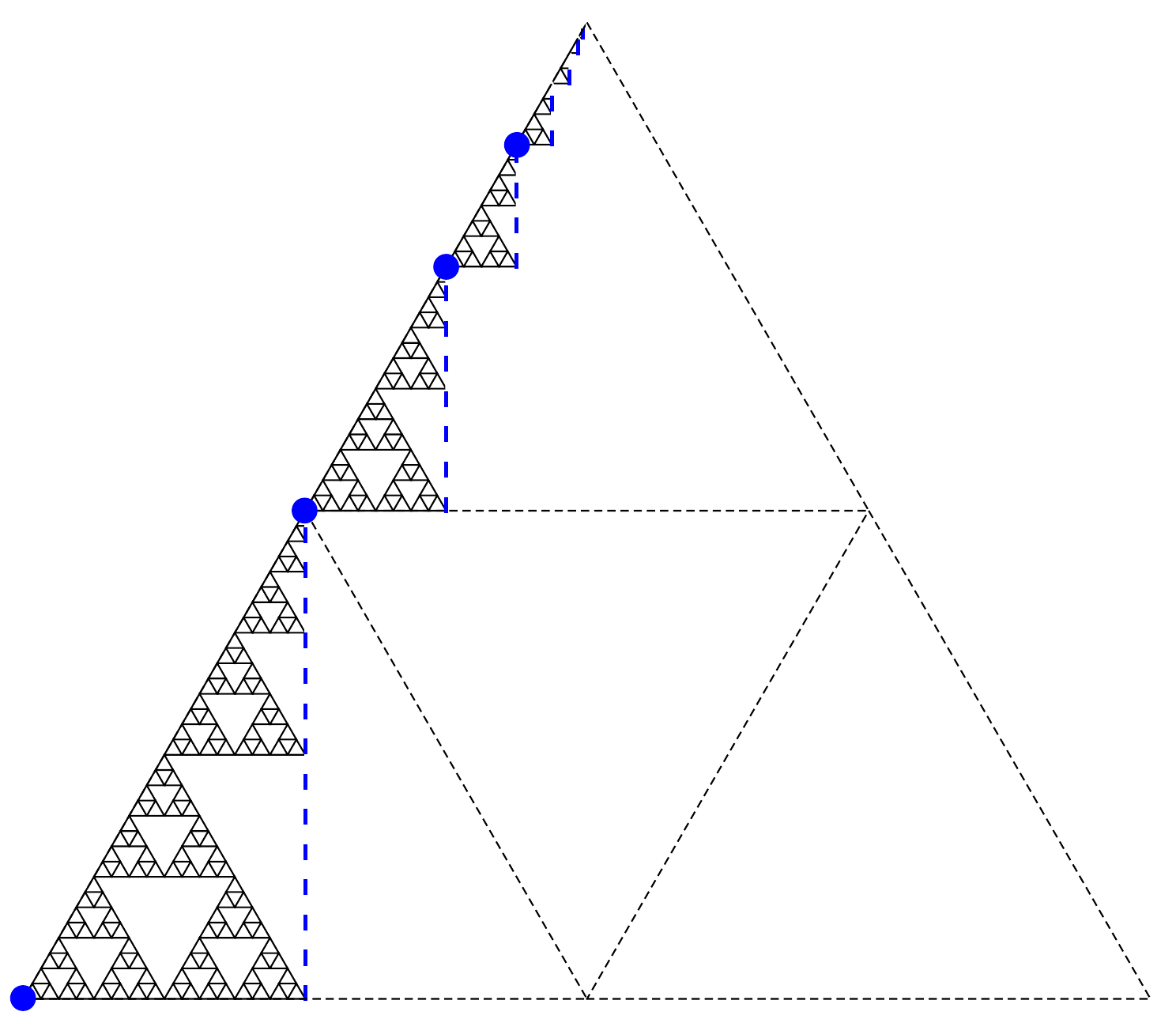}
	\caption{$\Omega_3$ in Example \ref{subsec6.1}-3}\label{figsg3}
\end{figure}
{\bf3.} Based on the example in {\bf 2}, let us consider the following open set $\Omega_3$, satisfying that $\Omega_3=F_3(\Omega_3)\cup F_1(\Omega_1\setminus\{p_1,p_3\})$, see Figure \ref{figsg3}.

 It is direct to check that $\Omega_3$ is a BGD open set with height $2$. By Theorem \ref{reducible}-(3), we have for $*=D$ or $N$, as $x\rightarrow+\infty$,
\begin{equation*}
\rho^{\Omega_3}_*(x)=\frac14G\Big(\frac{\log x}{2}\Big)x^{\frac{\log3}{\log5}}+O\Big((\log x)^2\Big).
\end{equation*}

\medskip

{\bf4.} For $\delta\in(0,1)$, Consider a horizontal line which intersects $\overline{p_1p_3}$ at a point with distance $\delta$ to $p_3$, and denote the open set of $K$ above this line as $\Omega_{\delta}$. Let us consider $\Omega_{2/3}$ and $\Omega_{1/3}$, see Figure \ref{figsg4}.
\begin{figure}[h]
	\includegraphics[width=4.5cm]{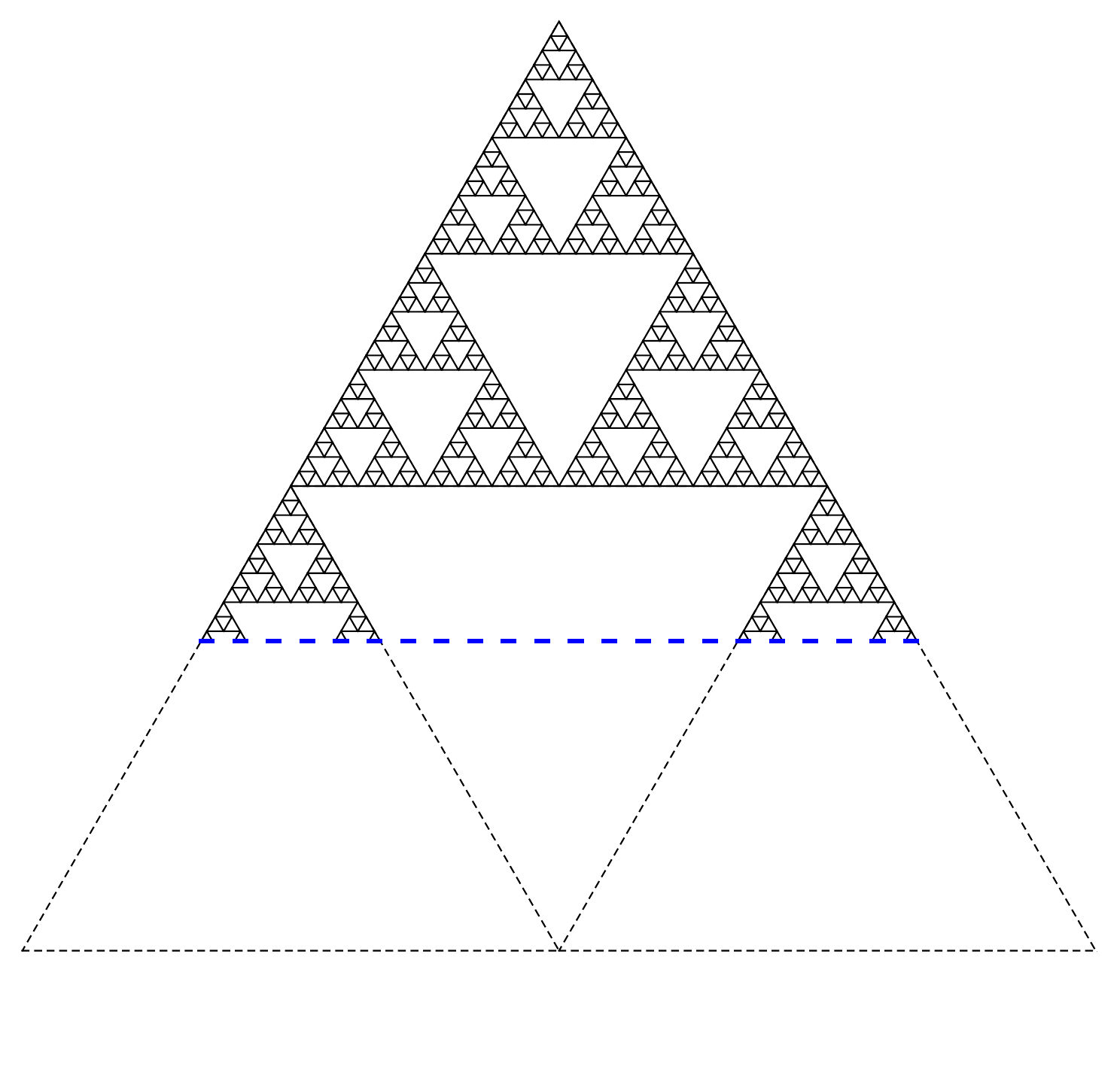}\hspace{1cm}
\includegraphics[width=4.5cm]{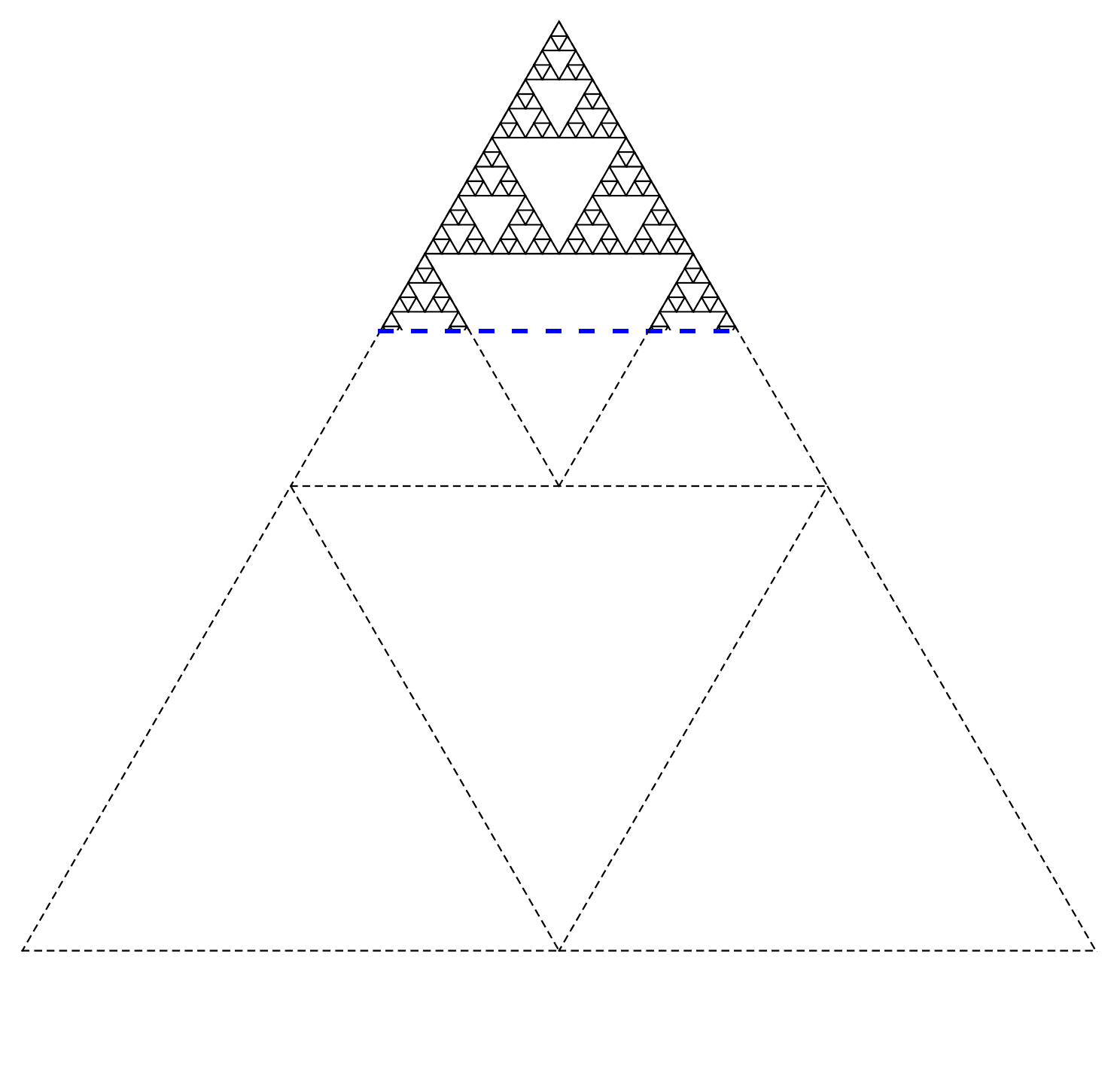}
	\caption{$\Omega_{2/3}$ and $\Omega_{1/3}$ in Example \ref{subsec6.1}-4}\label{figsg4}
\end{figure}   Then $\{\Omega_{2/3}, \Omega_{1/3}\}$ satisfies the BGD condition with an irreducible matrix $A=\Big(
                                                                                                             \begin{array}{cc}
                                                                                                               0 & 2 \\
                                                                                                               1 & 0 \\
                                                                                                             \end{array}
                                                                                                           \Big)
$ and $\Psi(A)=\sqrt 2$. By Theorem \ref{thm-irreducible}, we find that there exists a $\log 5$-periodic bounded function $G_*$ for $*=D$ or $N$, such that as $x\rightarrow+\infty$,
\begin{align*}
\rho^{\Omega_{2/3}}_*(x)&=\frac37G\Big(\frac{\log x}{2}\Big)x^{\frac{\log3}{\log5}}+\sqrt{2}G_*\Big(\frac{\log x}{2}\Big)x^{\frac{\log(\sqrt{2})}{\log 5}}+o\Big(x^{\frac{\log(\sqrt{2})}{\log 5}}\Big),\\
\rho^{\Omega_{1/3}}_*(x)&=\frac17G\Big(\frac{\log x}{2}\Big)x^{\frac{\log3}{\log5}}+G_*\Big(\frac{\log x}{2}-\frac{\log 5}{2}\Big)x^{\frac{\log(\sqrt{2})}{\log 5}}+o\Big(x^{\frac{\log(\sqrt{2})}{\log 5}}\Big).
\end{align*}

\medskip

{\bf 5.} Take $\widetilde\Omega$ to be the open set satisfying $\widetilde\Omega=F_1(\widetilde\Omega)\cup F_2(\widetilde\Omega)\cup F_{33}(\Omega\setminus\{p_3\})$, where $\Omega=K\setminus \overline{p_1p_2}$ as the example in {\bf 1}, see Figure \ref{figsg5}.
\begin{figure}[h]
	\includegraphics[width=6cm]{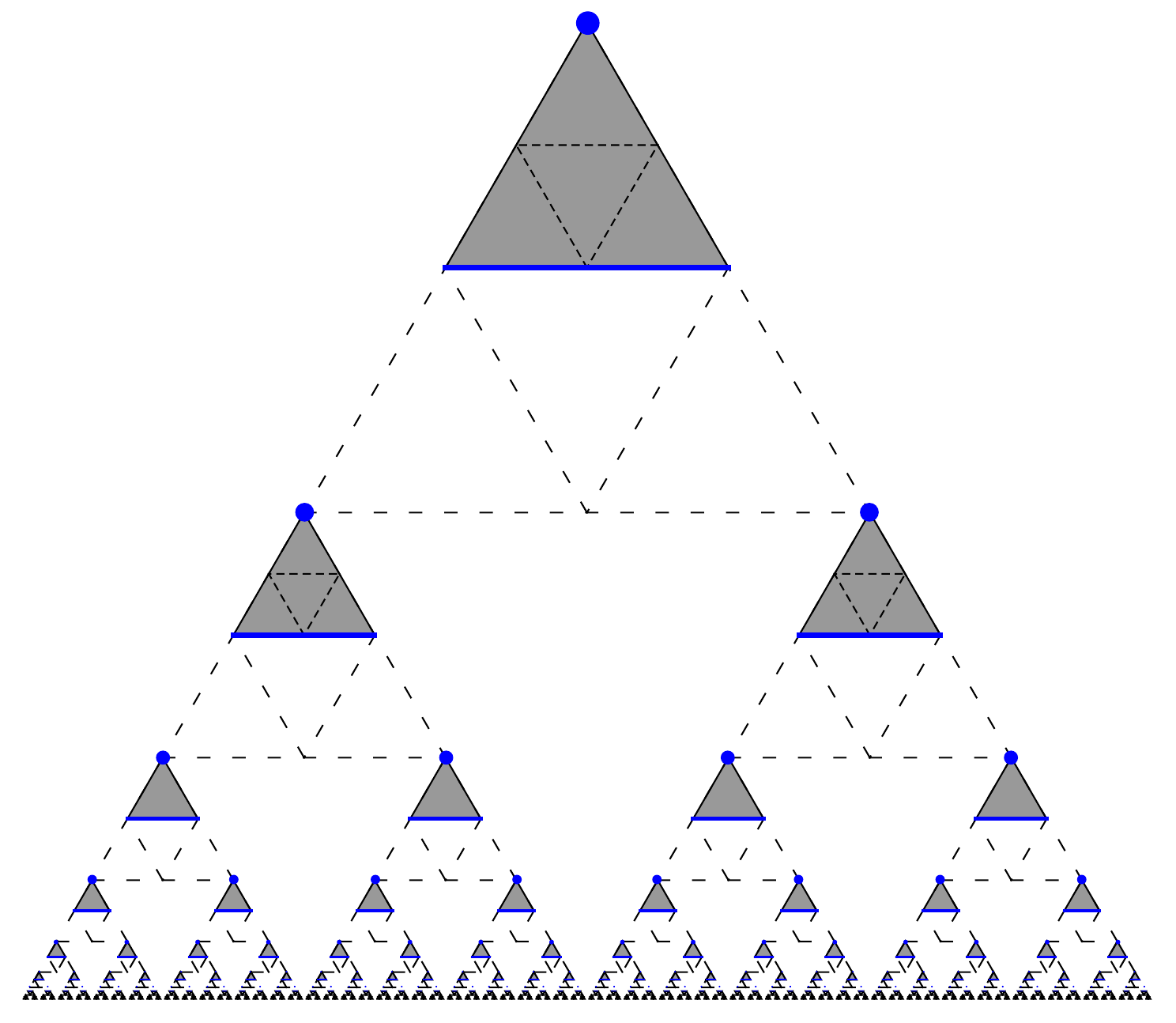}
	\caption{$\widetilde\Omega$ in Example \ref{subsec6.1}-5}\label{figsg5}
\end{figure}
Then $\widetilde\Omega$ satisfies the $\rm\widetilde{BGD}$ condition (see Remark 2 in Section \ref{sec3}) with spectral radius $2$ and height $1$. By Theorem \ref{reducible}-(1), there exists a $\frac{\log 5}2$-periodic bounded function $\widetilde G_*$ for $*=D$ or $N$, such that as $x\rightarrow+\infty$,
\begin{equation*}
\rho^{\widetilde\Omega}_*(x)=\frac13G\Big(\frac{\log x}{2}\Big)x^{\frac{\log3}{\log5}}+{\widetilde G}_{*}\Big(\frac{\log x}{2}\Big) x^{\frac{\log2}{\log5}}\log x+o\Big(x^{\frac{\log2}{\log5}}\log x\Big).
\end{equation*}

Indeed, since $\widetilde \Omega$ is a disjoint union of copies of $\Omega\setminus\{p_3\}$, an argument analogous to Lemma \ref{lemmaKL} yields
\begin{equation*}
{\widetilde G}_{*}=\frac1{4\log 5}{G}_{*},
\end{equation*}
where $G_*$ is the function defined in \eqref{rhostarsgdomain}.

\medskip

Now we turn to consider the general open subsets in $K$ which are not necessarily of BGD type.

\medskip

{\bf 6.} Let  $\Omega\subset K$ be a non-empty open set in $K$ whose boundary $D$ is non-empty and has the upper Minkowski dimension $\alpha_M\in[0,\frac{\log3}{\log2})$.
Let $\Sigma=\{1,2,3\}$. For $n\geq1$ and a word $\omega=\omega_1\cdots\omega_n\in\Sigma^n$, denote $\omega^-=\omega_1\ldots\omega_{n-1}$.
For $k\geq1$, define $\Lambda_k=\{\omega\in \Sigma^k:\ F_\omega(K)\subset \Omega, F_{\omega^-}(K)\nsubset\Omega\}$, then $\{F_{\omega}(K):\ \omega\in\Lambda_k,{k\geq1}\}$ forms a Whitney-type decomposition of $\Omega$. Let $\nu$ be the normalized $\frac{\log3}{\log2}$-dimensional Hausdorff measure on $K$. Then the measure of $\Omega$ is given by
\begin{equation*}
\nu(\Omega)=\sum_{k=1}^\infty\frac{\#\Lambda_k}{3^k}.
\end{equation*}
Define $\widetilde\Lambda_k=\{\omega\in \Sigma^k:\ F_\omega(K)\cap D\neq\emptyset\}$. Clearly, since $\#\Lambda_k\leq 3\#\widetilde\Lambda_k$ for $k\geq1$, we see that
\begin{equation*}
\alpha_I:=\limsup\limits_{k\rightarrow+\infty}\frac{\log(\#\Lambda_k)}{k\log2}\leq \limsup\limits_{k\rightarrow+\infty}\frac{\log(\#\widetilde\Lambda_k)}{k\log2}=\alpha_M.
\end{equation*}
\begin{proposition}
For any $\varepsilon\in(0,\frac{\log3}{\log2}-\alpha_M)$, there exists $C>0$ such that for $x>0$,
\begin{equation*}
\nu(\Omega)G\Big(\frac{\log x}{2}\Big)x^{\frac{\log 3}{\log 5}}-Cx^{\frac{(\alpha_I+\varepsilon)\log2}{\log5}}\leq \rho_D^\Omega(x)\leq \nu(\Omega)G\Big(\frac{\log x}{2}\Big)x^{\frac{\log 3}{\log 5}}+Cx^{\frac{(\alpha_M+\varepsilon)\log2}{\log5}}.
\end{equation*}
\end{proposition}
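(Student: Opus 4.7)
The plan is to exploit the Whitney-type decomposition $\{F_\omega(K):\omega\in\Lambda_k,k\ge 1\}$ together with Dirichlet/Neumann bracketing and Kigami's asymptotic \eqref{Kigamiformula}, whose period $T=\log 5/2$ exactly matches the scaling of the cells. Since $r_i\mu_i=1/5$ for all $i$, each level-$k$ cell scales cleanly: the self-similarity of $(\mathcal E,\mathcal F)$ and $\mu$ (via the substitution $u=\tilde u\circ F_\omega^{-1}$ in the eigenvalue equation) gives
\begin{equation*}
\rho^{F_\omega(K)^\circ}_D(x)=\rho_D(x/5^k),\qquad \rho^{F_\omega(K)}_N(x)=\rho_N(x/5^k),
\end{equation*}
where $F_\omega(K)^\circ=F_\omega(K)\setminus F_\omega(V_0)$.

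For the lower bound I will use Dirichlet bracketing. Any function in $\mathcal F_{F_\omega(K)^\circ,0}$ extends by zero to $\mathcal F_{\Omega,0}$ preserving both energy and $L^2$-norm, so $\bigoplus_{\omega\in\Lambda_k,k\ge 1}\mathcal F_{F_\omega(K)^\circ,0}$ embeds isometrically into $\mathcal F_{\Omega,0}$, and the min-max principle yields $\rho_D^\Omega(x)\ge\sum_{k\ge 1}\#\Lambda_k\,\rho_D(x/5^k)$. For the upper bound I will first verify that $\Omega=\bigcup_{\omega\in\Lambda_k,k\ge 1}F_\omega(K)$: for $x\in\Omega$ the fundamental neighborhoods $K_{n,x}$ shrink to $x$, so there is a smallest $n_0$ with $K_{n_0,x}\subset\Omega$, and any level-$n_0$ cell containing $x$ whose level-$(n_0-1)$ parent is not in $\Omega$ lies in $\Lambda_{n_0}$. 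The cells then overlap only on a countable subset of $V_\ast$, which has $\mu$-measure zero. Restriction therefore embeds $\mathcal F_\Omega$ isometrically, as a strict subspace, into $\bigoplus_{\omega\in\Lambda_k,k\ge 1}\mathcal F_{F_\omega(K)}$, and min-max gives $\rho_D^\Omega(x)\le\rho_N^\Omega(x)\le\sum_{k\ge 1}\#\Lambda_k\,\rho_N(x/5^k)$.

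Substituting \eqref{Kigamiformula} into both sums, and using the $T$-periodicity $G(\log(x/5^k)/2)=G(\log x/2)$, the leading parts combine exactly as
\begin{equation*}
G\Big(\tfrac{\log x}{2}\Big)x^{\log 3/\log 5}\sum_{k\ge 1}\#\Lambda_k/3^k=\nu(\Omega)\,G\Big(\tfrac{\log x}{2}\Big)x^{\log 3/\log 5}.
\end{equation*}
To control the remainders I will truncate at $K_0=\lfloor\log(x/c_0)/\log 5\rfloor$, where $c_0>0$ is a uniform lower bound for the first positive Dirichlet/Neumann eigenvalue of $K$ (below which $\rho_*(x/5^k)$ vanishes), invoke $\#\Lambda_k\le C_\varepsilon\,2^{(\alpha_I+\varepsilon/2)k}$ from the definition of $\alpha_I$, and apply two geometric-series estimates (valid because $2^{\alpha_I+\varepsilon/2}<3$): one for the tail $x^{\log 3/\log 5}\sum_{k>K_0}\#\Lambda_k/3^k$, and one for the $O(1)$ remainder from \eqref{Kigamiformula} summed against $\sum_{k\le K_0}\#\Lambda_k$. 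Both are of order $x^{(\alpha_I+\varepsilon)\log 2/\log 5}$, which yields the stated lower bound and in fact an upper bound with exponent $\alpha_I+\varepsilon$; since $\alpha_I\le\alpha_M$, the proposition's (looser) upper bound follows \emph{a fortiori}. For $x$ in a bounded range the claimed inequalities are trivial upon enlarging $C$.

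The main technical obstacle is the careful setup of the bracketing, where one must handle the shared $V_\ast$-vertices of the Whitney cells. The lower bound is painless because compactly supported test functions extend by zero; the upper bound requires justifying that the only distinction between $\mathcal F_\Omega$ and $\bigoplus_{\omega,k}\mathcal F_{F_\omega(K)}$ is the continuity constraint at the countable collection of shared vertices, and that this constraint costs nothing in $L^2$-mass (while indeed restricting the admissible test functions, which is the essence of Neumann bracketing). Beyond this set-theoretic point the remaining estimates are routine geometric series of the type already used in Section \ref{sec 4}.
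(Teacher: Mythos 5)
Your lower bound is essentially the paper's argument (Dirichlet bracketing over the Whitney cells, the scaling identity $\rho_D^{F_\omega(K\setminus V_0)}(x)=\rho_D(x/5^k)$, and the geometric-series truncation at $n\approx\log x/\log 5$) and is sound: extension by zero realizes the decoupled Dirichlet form as a restriction of $(\mathcal E_{\Omega},\mathcal F_{\Omega,0})$, the decoupled Dirichlet form has no zero modes, and $\sum_{k}\#\Lambda_k\,\rho_D(x/5^k)$ is finite, so the comparison of counting functions is legitimate.

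The upper bound, however, has a genuine gap. Neumann bracketing over the \emph{infinite} Whitney decomposition does not yield $\rho_N^\Omega(x)\le\sum_{k\ge1}\#\Lambda_k\,\rho_N(x/5^k)$. The min--max comparison between $\mathcal F_\Omega$ and $\bigoplus_{\omega}\mathcal F_{F_\omega(K)}$ compares counting functions that include the zero eigenvalues, and the decoupled Neumann form has one zero mode per cell, hence infinitely many eigenvalues equal to $0$; its $n$-th eigenvalue is $0$ for every $n$, the decoupled operator fails to have compact resolvent, and the comparison becomes vacuous. This is precisely why the proposition's upper bound carries the exponent $\alpha_M$ rather than $\alpha_I$: one must cover $\Omega$ by \emph{finitely} many cells, which the paper does with $\widetilde\Omega_n=\Omega_n\cup\bigl(\bigcup_{\omega\in\widetilde\Lambda_n}F_\omega(K)\bigr)\supset\Omega$, paying a correction of size $C\bigl(\sum_{k\le n}\#\Lambda_k+\#\widetilde\Lambda_n\bigr)$ for the finitely many zero modes and vertex conditions; the $\#\widetilde\Lambda_n$ term, governed by the upper Minkowski dimension of $D$, is what produces $x^{(\alpha_M+\varepsilon)\log2/\log5}$ after choosing $n\approx\log x/\log5$. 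Your claim that the upper bound even holds with exponent $\alpha_I+\varepsilon$ is a symptom of the error: it would be strictly stronger than the proposition, and nothing in your argument controls the cells meeting $\partial\Omega$, which is exactly where $\alpha_M$ must enter. To repair the proof, replace the infinite Neumann bracketing by the finite covering $\widetilde\Omega_n$ and track the resulting $O\bigl(\sum_{k\le n}\#\Lambda_k+\#\widetilde\Lambda_n\bigr)$ term.
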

\begin{proof}
The proof is inspired by \cite[Theorem 2.1]{La} of Lapidus, see also \cite[Proposition 12.6]{Fa}. In the following, we use $C$ to denote a positive constant which may vary in value.

First, let us look at the lower bound. Let $\Omega_n=\bigcup_{k=1}^n\bigcup_{\omega\in\Lambda_k}F_{\omega}(K)$ be the $n$-th approximation of $\Omega$. Then $\Omega_n\subset\Omega$, and we have $\rho_D^\Omega(x)\geq \rho_D^{\Omega_n}(x)$. By putting Dirichlet boundary condition on each cell $F_\omega(K)$ in the above decomposition of $\Omega_n$, we see that
\begin{equation}\label{eqqeSGe1}
\rho_D^{\Omega_n}(x)\geq \sum_{k=1}^n\sum_{\omega\in\Lambda_k}\rho_D^{F_{\omega}(K\setminus V_0)}(x).
\end{equation}
By \eqref{Kigamiformula}, there is a constant $C>0$ (independent of $x$ and of $k$) such that
\begin{equation}\label{eqqeSGe2}
\rho_D^{F_{\omega}(K\setminus V_0)}(x)\geq\frac{1}{3^k}G\Big(\frac{\log x}{2}\Big)x^{\frac{\log3}{\log5}}-C.
\end{equation}
Substituting \eqref{eqqeSGe2} into \eqref{eqqeSGe1}, we obtain
\begin{align*}
\rho_D^{\Omega_n}(x)&\geq\sum_{k=1}^n\#\Lambda_k\left(\frac{1}{3^k}G\Big(\frac{\log x}{2}\Big)x^{\frac{\log3}{\log5}}-C\right)\\
&=\Big(\nu(\Omega)-\sum_{k=n+1}^\infty\frac{\#\Lambda_k}{3^k}\Big)G\Big(\frac{\log x}{2}\Big)x^{\frac{\log3}{\log5}}-C\sum_{k=1}^n\#\Lambda_k\\
&\geq\nu(\Omega)G\Big(\frac{\log x}{2}\Big)x^{\frac{\log3}{\log5}}-C\Big(\sum_{k=n+1}^\infty\frac{\#\Lambda_k}{3^k}x^{\frac{\log3}{\log5}}+\sum_{k=1}^n\#\Lambda_k\Big).
\end{align*}
By the definition of $\alpha_I$, we have for any $\varepsilon\in(0,\frac{\log3}{\log2}-\alpha_I)$, there exists $k_0$ sufficiently large such that for all $k\geq k_0$,
$\log(\#\Lambda_k)\leq (\alpha_I+\varepsilon)k\log 2$; while for $k< k_0$, we simply have $\#\Lambda_k\leq 3^k\nu(\Omega)< 3^{k_0}\nu(\Omega)$. Hence, we get
\begin{align*}
\rho_D^{\Omega}(x)&\geq\nu(\Omega)G\Big(\frac{\log x}{2}\Big)x^{\frac{\log3}{\log5}}-C\Big(\sum_{k=n+1}^\infty\frac{2^{(\alpha_I+\varepsilon)k}}{3^k}x^{\frac{\log3}{\log5}}+\sum_{k=1}^n2^{(\alpha_I+\varepsilon)k}\Big)\\
&\geq\nu(\Omega)G\Big(\frac{\log x}{2}\Big)x^{\frac{\log3}{\log5}}-C\Big(\frac{1}{3^n}x^{\frac{\log3}{\log5}}+1\Big)2^{(\alpha_I+\varepsilon)n}.
\end{align*}
Taking $n$ to be the smallest positive integer $n\geq\frac{\log x}{\log 5}$, we see that
\begin{equation*}
\rho_D^{\Omega}(x)\geq\nu(\Omega)G\Big(\frac{\log x}{2}\Big)x^{\frac{\log3}{\log5}}-Cx^{\frac{(\alpha_I+\varepsilon)\log2}{\log5}},
\end{equation*}
which is the desired estimate.

The argument for the upper bound is quite similar.

Denote $\widetilde\Omega_n=\Omega_n\bigcup\Big(\bigcup_{\omega\in\widetilde\Lambda_n}F_{\omega}(K)\Big)$. Noting that $\Omega\subset\widetilde\Omega_n$, we have
\begin{align*}
\rho_D^\Omega(x)&\leq \rho_D^{\widetilde\Omega_n}(x)\leq\sum_{\omega\in \bigcup_{k=1}^n \Lambda_k\bigcup\widetilde\Lambda_n}\rho^{F_{\omega}(K\setminus V_0)}_D(x)\notag+C\Big(\sum_{k=1}^n{\#\Lambda_k}+\#\widetilde\Lambda_n\Big)\\
&\leq\sum_{k=1}^n\frac{\#\Lambda_k}{3^k}G\Big(\frac{\log x}{2}\Big)x^{\frac{\log3}{\log5}}+\frac{\#\widetilde\Lambda_n}{3^n}G\Big(\frac{\log x}{2}\Big)x^{\frac{\log3}{\log5}}+C\Big(\sum_{k=1}^n{\#\Lambda_k}+\#\widetilde\Lambda_n\Big)\\
&\leq\nu(\Omega)G\Big(\frac{\log x}{2}\Big)x^{\frac{\log3}{\log5}}+C\Big(\frac{\#\widetilde\Lambda_n}{3^n}x^{\frac{\log3}{\log5}}+\sum_{k=1}^n{\#\widetilde\Lambda_k}\Big).
\end{align*}
By the definition of $\alpha_M$, for any $\varepsilon\in(0,\frac{\log3}{\log2}-\alpha_M)$, there exists $n_0\geq1$ large enough such that for any $n\geq n_0$,
$\log(\#\widetilde\Lambda_n)\leq (\alpha_M+\varepsilon)n\log 2$. Taking $n$ to be the smallest positive integer such that $n\geq \frac{\log x}{\log 5}$, we arrive at
\begin{equation*}
\rho_D^\Omega(x)\leq\nu(\Omega)G\Big(\frac{\log x}{2}\Big)x^{\frac{\log3}{\log5}}+Cx^{\frac{(\alpha_M+\varepsilon)\log2}{\log5}},
\end{equation*}
proving the upper bound.
\end{proof}

\noindent{\bf Remark.} {\it If $\Omega$ satisfies the BGD condition with a boundary $D$, the exponents $\alpha_I$ and $\alpha_M$ are equal, and they are also equal to the Hausdorff dimension $\alpha_{D}\in[0,\frac{\log3}{\log2})$ of $D$. According to Theorem \ref{reducible}, we actually have a finer estimate of the second term, i.e. $G_D\Big(\frac{\log x}{2}\Big)x^{\frac{\alpha_{D}}{\beta}}(\log x)^m$ ($\alpha_{D}>0$) or $O\Big((\log x)^m\Big)$ ($\alpha_D=0$), where $\beta=\frac{\log5}{\log2}$ is the walk dimension of $K$.}

\subsection{Example: Lindstr{\o}m snowflake}\label{subsec6.2} Let $\left\{p_k=\exp\Big(\frac{2k\pi }{6}{\rm i}\Big)\right\}_{k=1}^6$ represent the six vertices of a regular hexagon, and $p_7=0$. For $1\leq k\leq7$, define $F_k$ to be the similitude on the plane give by $F_k(x)=\frac13(x-p_k)+p_k$. The self-similar set $K$ generated by the IFS $\{F_k\}_{k=1}^7$ is a p.c.f. self-similar set and typically a nested fractal, called the {\it Lindstr{\o}m snowflake}. Let $\mu$ be the $\frac{\log 7}{\log3}$-dimensional Hausdorff measure on $K$. By Lindsr{\o}m's theorem for nested fractals \cite{L}, there exists a self-similar Dirichlet form $(\mathcal E,\mathcal F)$ in $L^2(K,\mu)$ with a common energy renormalizing factor $r\in(0,1)$.
Consider the BGD domains $\{\Omega_1,\Omega_2,\Omega_3\}$ with Koch curve boundaries as introduced in Section \ref{sec 1}, see Figure \ref{figureSF}. The incidence matrix is
\begin{equation*}
A=\left(
    \begin{array}{ccc}
      0 & 2 & 0 \\
      0 & 2 & 1 \\
      0 & 2 & 3 \\
    \end{array}
  \right),
\end{equation*} satisfying $\Psi(A)=4$. Denote $G$ to be the $\frac{\log(7r^{-1})}{2}$-periodic function in Theorem \ref{thm-KL}. Then
$\{2,3\}$ forms a basic class, so we apply Theorem \ref{thm-irreducible} to see that: for $*=D$ or $N$, there exists a $\frac{\log(7r^{-1})}{2}$-periodic bounded function $G_*$ such that as $x\rightarrow+\infty$,
\begin{align*}
\rho^{\Omega_2}_*(x)&=G\Big(\frac{\log x}{2}\Big)x^{\frac{\log7}{\log(7r^{-1})}}+G_*\Big(\frac{\log x}{2}\Big)x^{\frac{\log4}{\log(7r^{-1})}}+o\Big(x^{\frac{\log4}{\log(7r^{-1})}}\Big),\\
\rho^{\Omega_3}_*(x)&=G\Big(\frac{\log x}{2}\Big)x^{\frac{\log7}{\log(7r^{-1})}}+2G_*\Big(\frac{\log x}{2}\Big)x^{\frac{\log4}{\log(7r^{-1})}}+o\Big(x^{\frac{\log4}{\log(7r^{-1})}}\Big).
\end{align*}
Also by the relation between $\Omega_1$ and $\Omega_2$, using Corollary \ref{thmcor}, we further see that as $x\rightarrow+\infty$,
\begin{equation*}
\rho^{\Omega_1}_*(x)=G\Big(\frac{\log x}{2}\Big)x^{\frac{\log7}{\log(7r^{-1})}}+\frac12G_*\Big(\frac{\log x}{2}\Big)x^{\frac{\log4}{\log(7r^{-1})}}+o\Big(x^{\frac{\log4}{\log(7r^{-1})}}\Big).
\end{equation*}
Note that in the above, the numbers $1,2,\frac12$ appeared as coefficients in the second order term represent ratios of the $\frac{\log 4}{\log 3}$-dimensional Hausdorff measures of the boundaries of $\Omega_1,\Omega_2,\Omega_3$.

\section{Appendix: Vector-valued renewal theorems}\label{sec 7}
 \setcounter{equation}{0}\setcounter{theorem}{0}

In this section, we present the vector-valued renewal theorems established by Lau, Wang, and Chu \cite[Theorems 4.2, 4.3 (for irreducible case), Theorem 4.5 (for general case)]{LWC}, and also refer to Hambly and Nyberg \cite[Theorems 2.1, 2.2, 2.6]{HN}. These results are precisely what we apply to derive the Weyl-Berry spectral asymptotics.

For a Radon measure $\mu$ on $\mathbb R$, denote $\mu(x)=\mu(-\infty,x]$ for $x\in\mathbb R$, and $\mu(x,x+h]=\mu(x+h)-\mu(x)$ for $h>0$.
Let $U=(U_{ij})_{n\times n}$ be a matrix of finite Radon measures defined on $\mathbb R$ vanishing on $(-\infty,0)$. Denote $U(+\infty)=\big(U_{ij}(+\infty)\big)_{n\times n}$ be the matrix of the total variations of the measures. Let ${M}=\Big(\int_{0}^{+\infty}xdU_{ij}(x)\Big)_{n\times n}=:(m_{ij})_{n\times n}$ be the first moment matrix.

By viewing $\{1,\ldots,n\}$ as the {\it state space}, we use ${\bf\eta}=(i_1,\ldots,i_k)$ to denote the {\it path} starting from state $i_1$ and visiting $i_2,\ldots,i_k$ successively. Such a path $\bf\eta$ is called a {\it cycle} if $i_1=i_k$, and a {\it simple cycle} if it is a cycle and all $i_1,\ldots,i_{k-1}$ are distinct. For a path ${\bf\eta}=(i_1,\ldots,i_k)$, we denote $U_{\bf\eta}=U_{i_1i_2}*\cdots*U_{i_{k-1}i_k}$.
Define $\mathcal G_U$ to be the closed subgroup of $\mathbb R$ generated by
\begin{equation*}
\bigcup\Big\{\text{supp }U_{\bf\eta}:\ {\bf\eta} \text{ is a simple cycle on $\{1,\ldots,n\}$}\Big\}.
\end{equation*}

\begin{theorem}\label{thmLWC1}[Lau-Wang-Chu] Suppose $U$ is a matrix of finite Radon measures defined on $\mathbb R$ such that each non-zero entry is non-degenerate at $0$ ($\text{\rm supp } U_{ij}\neq\{0\}$ providing $U_{ij}\neq 0$) and vanishes on $(-\infty,0)$. Also, suppose $U(+\infty)$ is irreducible and has maximal eigenvalue $1$. Let $W=\sum_{k=0}^{+\infty} U^{*k}$.

1. Non-lattice case: if $\mathcal G_U=\mathbb R$, then for any $h>0$,
\begin{equation}\label{eqdensegroup}
\lim_{x\rightarrow+\infty}W(x,x+h]=hB,
\end{equation}
where
\begin{equation*}
B=\frac1\alpha {\bf u}{\bf v}^T,\quad  \alpha={\bf v}^TM{\bf u},
\end{equation*}
($B=0$ if one of the $m_{ij}$ is $+\infty$) and ${\bf u},{\bf v}$ are the unique normalized right and left $1$-eigenvectors of $U(+\infty)$ respectively.

2. Lattice case: if $\mathcal G_U=\langle\varrho\rangle$ for some $\varrho>0$, then for any $a_{ij}\in \text{\rm supp } U_{{\bf\eta}(i,j)}$,
\begin{equation}\label{eqdiscretgroup}
\lim_{x\rightarrow+\infty}\Big(W_{ij}(x+a_{ij},x+a_{ij}+\varrho]\Big)_{n\times n}=\varrho B.
\end{equation}
\end{theorem}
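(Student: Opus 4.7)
The plan is to reduce Theorem \ref{thmLWC1} to the classical scalar Blackwell renewal theorem by means of a semi-Markov chain representation. First I would apply the Perron-Frobenius theorem to the irreducible non-negative matrix $U(+\infty)$, which has maximal eigenvalue $1$, to obtain strictly positive right and left eigenvectors ${\bf u},{\bf v}$ normalized so that ${\bf v}^T{\bf u}=1$. Performing the similarity transformation $\tilde U_{ij}(dx):=u_ju_i^{-1}U_{ij}(dx)$ replaces $U$ by a matrix of measures whose total-mass matrix $\tilde U(+\infty)$ is row-stochastic, while the renewal kernel transforms as $\tilde W_{ij}=u_ju_i^{-1}W_{ij}$. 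Since the claimed limits \eqref{eqdensegroup} and \eqref{eqdiscretgroup} transform covariantly under this change, it suffices to prove the theorem under the assumption that $U(+\infty)$ is row-stochastic. In that case $U$ is the kernel of a Markov renewal process: starting at $i$, the next state is $j$ with probability $U_{ij}(+\infty)$, after which the interarrival time is drawn from $U_{ij}/U_{ij}(+\infty)$; and $W_{ij}(x)$ is the expected number of visits to $j$ in $[0,x]$ starting from $i$.

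Next I would fix a reference state, say $1$, and decompose trajectories at successive visits to $1$. The strong Markov property yields the identities
\begin{equation*}
W_{i1}=H_{i1}*\sum_{k\geq 0}F_{11}^{*k},\qquad W_{ij}=\delta_{ij}\delta_0+W_{i1}*G_{1j},
\end{equation*}
where $F_{11}$ is the distribution of the first return time to $1$, $H_{i1}$ the first-passage distribution from $i$ to $1$, and $G_{1j}$ the finite excursion measure from $1$ to $j$ along paths avoiding $1$ in between. By irreducibility together with the maximal-eigenvalue hypothesis, $F_{11}$ is a genuine probability measure on $[0,+\infty)$. A standard bookkeeping of edge frequencies along a cycle, using that the embedded Markov chain has stationary distribution $\pi_i=u_iv_i$, identifies the mean of $F_{11}$ with $\alpha={\bf v}^TM{\bf u}$ (or $+\infty$ if some $m_{ij}=+\infty$), up to the factor $\pi_1=u_1v_1$ that is absorbed by the convolution with $H_{i1}$.

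The scalar Blackwell renewal theorem applied to $F_{11}$ then dictates the asymptotics of $\sum_kF_{11}^{*k}(x,x+h]$. In the non-lattice case, the limit $h/\alpha$ combined with dominated convergence against the finite convolution factors $H_{i1}$ and $G_{1j}$ gives \eqref{eqdensegroup} with the outer-product constant $B=\alpha^{-1}{\bf u}{\bf v}^T$. In the lattice case, $F_{11}$ is concentrated on a coset of some $\langle\varrho'\rangle$, and the lattice version of Blackwell yields \eqref{eqdiscretgroup} with shifts determined by the supports of the convolutions $H_{i1}*F_{11}^{*k}*G_{1j}$. The link with the group $\mathcal G_U$ of the statement is the identity: the closed subgroup of $\mathbb R$ generated by $\text{\rm supp}\,F_{11}$ equals the one generated by $\{\text{\rm supp}\,U_{\bf\eta}:{\bf\eta}\text{ a simple cycle through }1\}$, which coincides with $\mathcal G_U$ because, by irreducibility, any simple cycle can be conjugated to one through $1$ and simple cycles generate all cycles.

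The main obstacle I anticipate is the lattice case, more precisely the flexibility in the choice of the shift $a_{ij}$: one must show that \eqref{eqdiscretgroup} holds with $a_{ij}$ drawn from the support of any admissible path ${\bf\eta}(i,j)$, not merely a canonical one. This reduces to the observation that any two admissible paths from $i$ to $j$ differ, in total length, by an element of the group generated by simple cycles, so the supports of $H_{i1}*F_{11}^{*k}*G_{1j}$ all lie in a single coset of $\mathcal G_U$ shifted by an arbitrary $a_{ij}\in\text{\rm supp}\,U_{{\bf\eta}(i,j)}$. A subsidiary technical point is to verify that the non-degeneracy hypothesis $\text{\rm supp}\,U_{ij}\neq\{0\}$ rules out periodicity pathologies and ensures that Blackwell's theorem applies in its direct Riemann integrable form after convolution with $H_{i1}$ and $G_{1j}$.
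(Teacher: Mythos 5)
The paper does not prove Theorem \ref{thmLWC1}: it is quoted from Lau--Wang--Chu \cite{LWC} (see also Hambly--Nyberg \cite{HN}), and the appendix only supplies a proof of the derived limit \eqref{eqeqconv} in Theorem \ref{thmLWC2}. So there is no in-paper argument to compare against; what you propose is an independent proof via classical Markov renewal (semi-Markov) theory, reducing to the scalar Blackwell theorem by decomposing the renewal measure at successive visits to a reference state. This is a legitimate and well-known alternative to the route of \cite{LWC}, whose argument is more analytic (a vector-valued Choquet--Deny theorem in the spirit of Feller's direct proof). Your outline correctly isolates and resolves the genuinely delicate points: the similarity transformation $\tilde U_{ij}=u_ju_i^{-1}U_{ij}$ to the row-stochastic case and the covariance of the limits under it; the identification of the closed group generated by $\mathrm{supp}\,F_{11}$ with $\mathcal G_U$ by conjugating simple cycles through the reference state; and the coset bookkeeping that justifies the stated freedom in the choice of $a_{ij}$ in the lattice case. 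What each approach buys: yours imports the hard analysis wholesale from Blackwell's theorem and makes the constant $B=\alpha^{-1}{\bf u}{\bf v}^T$ transparent as a stationary-occupation identity, at the cost of probabilistic bookkeeping; the analytic route of \cite{LWC} handles the matrix-valued measure directly and extends more readily to the reducible case (Theorem \ref{thmHN}).

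Two details need repair, neither fatal. First, the identity $W_{ij}=\delta_{ij}\delta_0+W_{i1}*G_{1j}$ is not exact: it omits the visits to $j$ occurring strictly before the first visit to the reference state $1$. The correct form is $W_{ij}=L_{ij}+W_{i1}*G_{1j}$ with $L_{ij}$ the (finite) measure of pre-first-passage visits; since $L_{ij}$ is finite, its increments $L_{ij}(x,x+h]$ vanish as $x\rightarrow+\infty$ and the asymptotics are unaffected, but the identity should be stated this way. Second, the accounting of the factor $\pi_1=u_1v_1$ is misplaced: in the row-stochastic normalization the mean of $F_{11}$ is $\alpha/\pi_1$, Blackwell gives the limit $h\pi_1/\alpha$ for the pure renewal measure of $F_{11}$, and $H_{i1}$ has total mass $1$ so it absorbs nothing; the factor $\pi_1$ is instead cancelled by the total mass $G_{1j}(+\infty)=\pi_j/\pi_1$ of the excursion measure, yielding $h\pi_j/\alpha=h\tilde B_{ij}$ as required. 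With these corrections the proposal is a complete and correct proof strategy.
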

\noindent{\bf Remark.} {\it In the above theorem, $\langle\varrho\rangle$ means the subgroup of $\mathbb R$ generated by $\varrho$, ${\bf\eta}(i,j)$ is any path from $i$ to $j$ such that $U_{{\bf\eta}(i,j)}\neq0$.}

\medskip

By using Theorem \ref{thmLWC1}, the following vector-valued renewal theorem holds, see \cite[Theorem 4.3]{LWC}. Say a function
$f:\ \mathbb R\rightarrow\mathbb R$ {\it directly Riemann integrable} if it is Riemann integrable on any finite interval and $\sum_{k\in\mathbb Z}\sup_{x\in(k,k+1]}|f(x)|<+\infty$.

\begin{theorem}\label{thmLWC2}[Lau-Wang-Chu] Under the same hypotheses on $U$ as in Theorem \ref{thmLWC1}, let ${\bf z}=(z_1,\ldots,z_n)^T$ be a vector of directly Riemann integrable functions with ${\bf z}(x)={\bf0}$ for $x<0$. Then ${\bf f}(x)=W*{\bf z}(x)$ is a bounded solution of
\begin{equation*}
{\bf f}(x)={\bf z}(x)+U*{\bf f}(x),\quad x\geq0,
\end{equation*}
and it is unique in the class of bounded functions that vanish on $(-\infty,0)$. Furthermore, if $\mathcal G_U=\mathbb R$, then
\begin{equation*}
\lim_{x\rightarrow+\infty}{\bf f}(x)=B\Big(\int_0^{+\infty}{\bf z}(t)dt\Big),
\end{equation*}
where $B$ is defined in Theorem \ref{thmLWC1}. If $\mathcal G_U=\langle\varrho\rangle$ for some $\varrho>0$, then for $a_{j1}\in\text{\rm supp }U_{{\bf\eta}(j,1)}$,
\begin{equation}\label{eqeqconv}
\lim_{x\rightarrow+\infty}\left(\left(
                                                                           \begin{array}{c}
                                                                            f_1(x+a_{11}) \\
                                                                             \ldots\\
                                                                            f_n(x+a_{n1})\\
                                                                           \end{array}
                                                                         \right)-\varrho B\sum_{k\in\mathbb Z}\left(
                                                                           \begin{array}{c}
                                                                         z_1(x+a_{11}+k\varrho) \\
                                                                             \ldots\\
                                                                           z_n(x+a_{n1}+k\varrho)\\
                                                                           \end{array}
                                                                         \right)\right)=0.
\end{equation}
\end{theorem}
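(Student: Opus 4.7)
The plan is to verify four claims in sequence: that $\mathbf{f} := W*\mathbf{z}$ satisfies the renewal equation, that it is bounded, that it is the unique such solution vanishing on $(-\infty,0)$, and then the two asymptotic formulas. The first claim is immediate from the identity $W = I + U*W$ (a direct consequence of $W = \sum_{k\ge 0} U^{*k}$) together with Fubini's theorem, which gives $\mathbf{f} = \mathbf{z} + U*\mathbf{f}$. For boundedness, Theorem \ref{thmLWC1} implies that $W_{ij}(x, x+1]$ is bounded uniformly in $x \in \mathbb{R}$ (in both the non-lattice and lattice cases, once one accounts for the finitely many residue classes modulo $\varrho$); writing $f_i(x) = \sum_j \int z_j(x-t)\, dW_{ij}(t)$ and decomposing the integration domain into unit intervals, the direct Riemann integrability of $\mathbf{z}$ yields $\sup_x |f_i(x)| < +\infty$. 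Uniqueness follows from iterating the renewal equation: a difference $\mathbf{h}$ of two bounded solutions satisfies $\mathbf{h} = U^{*k}*\mathbf{h}$ for every $k$, and the non-degeneracy of $U$ at $0$ together with spectral radius $1$ forces the mass of $U^{*k}$ on any compact set to tend to zero, so $\mathbf{h} \equiv 0$.

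For the asymptotics, the strategy is to approximate $\mathbf{z}$ by step functions and invoke Theorem \ref{thmLWC1}. In the non-lattice case, for $z_j = \chi_{(a, a+h]}$ one has $(W_{ij}*z_j)(x) = W_{ij}(x - a - h, x - a] \to h B_{ij}$; summing over $j$ and passing to general directly Riemann integrable $\mathbf{z}$ by a uniform approximation argument (using the summability $\sum_k \sup_{(k,k+1]}|z_j| < +\infty$ to control the tail contributions) gives $\mathbf{f}(x) \to B \int_0^{+\infty} \mathbf{z}(t)\,dt$. In the lattice case, $W_{ij}$ is asymptotically supported on $a_{ij} + \varrho \mathbb{Z}$ with mass $\approx \varrho B_{ij}$ at each lattice site by \eqref{eqdiscretgroup}; approximating $z_j$ by indicators of short intervals around the points $a_{j1} + k\varrho$ and applying the lattice renewal theorem with the appropriate shifts should then produce \eqref{eqeqconv}.

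The main obstacle is ensuring the arithmetic compatibility of shifts in the lattice case: for $\varrho B_{ij} \sum_k z_j(x + a_{i1} + k\varrho)$ on the right of \eqref{eqeqconv} to pair correctly with the convolution $(W_{ij}*z_j)(x + a_{i1})$ on the left, one needs $a_{ij} + a_{j1} \equiv a_{i1} \pmod{\varrho}$ for every pair $(i, j)$, which must be extracted from $\mathcal{G}_U = \langle\varrho\rangle$ via the simple-cycle definition applied to the concatenated path $\eta(1,i) * \eta(i,j) * \eta(j,1)$. Once this arithmetic is verified (and similarly $a_{ii} \equiv 0 \pmod{\varrho}$), the upgrade from lattice indicators to arbitrary directly Riemann integrable $\mathbf{z}$ proceeds via a careful approximation that preserves the lattice alignment, completing \eqref{eqeqconv}.
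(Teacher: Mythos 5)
Your proposal is correct and, for the only part the paper actually proves (the lattice-case limit \eqref{eqeqconv}; the existence, boundedness, uniqueness, and non-lattice statements are simply cited from \cite{LWC}), it follows essentially the same route: compare $W_{ij}*z_j$ with the lattice sum via \eqref{eqdiscretgroup}, truncate the convolution using direct Riemann integrability to control the tail, and use the congruence $a_{ij}+a_{j1}\equiv a_{i1}\ (\mathrm{mod}\ \varrho)$. You are in fact more explicit than the paper about that congruence, which the paper invokes silently in the step $\sum_{k\in\mathbb Z}z_j(x+a_{i1}-a_{ij}+k\varrho)=\sum_{k\in\mathbb Z}z_j(x+a_{j1}+k\varrho)$.
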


For our usage, we need the lattice case \eqref{eqeqconv}. Here is a short proof for this by using \eqref{eqdiscretgroup}. \begin{proof}[Proof of \eqref{eqeqconv}]For $i=1,\ldots,n$, by that
${\bf f}(x)=W*{\bf z}(x)$, we have (by denoting $B=(B_{ij})_{n\times n}$)
\begin{align}
&f_{i}(x+a_{i1})-\varrho \sum_{j=1}^nB_{ij}\sum_{k\in\mathbb Z}z_j(x+a_{j1}+k\varrho)\notag\\
=&\sum_{j=1}^n\Big(\int_0^{+\infty} z_j(t)dW_{ij}(x+a_{i1}-t)-\varrho B_{ij}\sum_{k\in\mathbb Z}z_j(x+a_{j1}+k\varrho)\Big).\label{eqeeq1}
\end{align}
 For each $j=1,\dots, n$ and $\varepsilon>0$, by that $z_j$ is directly Riemann integrable, there exists $N>0$ such that $\big|\int_{N}^{+\infty} z_j(t)dW_{ij}(x+a_{i1}-t)\big|<\varepsilon$. Using \eqref{eqdiscretgroup}, we see that
\begin{equation}\label{eqeeq2}
\lim_{x\rightarrow+\infty}\Big(\int_{0}^Nz_j(t)dW_{ij}(x+a_{i1}-t)-\varrho B_{ij}\sum_{k\in\mathbb Z:0\leq x+a_{i1}-a_{ij}+k\varrho\leq N}z_j(x+a_{i1}-a_{ij}+k\varrho)\Big)=0,
\end{equation}
while for $N>0$ sufficiently large, again by that $z_j$ is directly Riemann integrable,
\begin{equation}\label{eqeeq3}
\varrho B_{ij}\Big|\sum_{k\in\mathbb Z:0\leq x+a_{i1}-a_{ij}+k\varrho\leq N}z_j(x+a_{i1}-a_{ij}+k\varrho)-\sum_{k\in\mathbb Z}z_j(x+a_{i1}-a_{ij}+k\varrho)\Big|\leq\varepsilon.
\end{equation}
Combining \eqref{eqeeq2} and \eqref{eqeeq3}, together with $\sum_{k\in\mathbb Z}z_j(x+a_{i1}-a_{ij}+k\varrho)=\sum_{k\in\mathbb Z}z_j(x+a_{j1}+k\varrho)$, we obtain
\begin{equation*}
\lim_{x\rightarrow+\infty}\Big|\int_0^{+\infty} z_j(t)dW_{ij}(x+a_{i1}-t)-\varrho B_{ij}\sum_{k\in\mathbb Z}z_j(x+a_{j1}+k\varrho)\Big|\leq 2\varepsilon.
\end{equation*}
Letting $\varepsilon\rightarrow0$, substituting the above into \eqref{eqeeq1}, we see that
\begin{equation*}
\lim_{x\rightarrow+\infty}\Big(
f_{i}(x+a_{i1})-\varrho \sum_{j=1}^nB_{ij}\sum_{k\in\mathbb Z}z_j(x+a_{j1}+k\varrho)\Big)=0,
\end{equation*}
proving \eqref{eqeqconv}.
\end{proof}

Now, let $A$ be an irreducible non-negative $n\times n$ matrix. Define $t_{ij}=\min\ \{k\geq1:\ A^k(i,j)>0\}$ for $i,j=1,\ldots,n$. Let $\mathcal G_{i}$ be the subgroup of $\mathbb Z$ generated by $\{k\geq1:\ A^k(i,i)>0\}$, and $t_i\geq1$ be the generator of $\mathcal G_{i}$. Let $\varrho$ be the greatest common divisor of $t_1,\ldots,t_n$. Note that $\varrho$ is the generator of the subgroup in $\mathbb Z$ generated by $t_1,\ldots,t_n$. Let $\bf u$ and $\bf v$ be the normalized right and left $1$-eigenvectors of $A$. By applying Theorem \ref{thmLWC2} (the lattice case), we have the following corollary.

\begin{corollary}\label{thm7.3} Let $A$ be an irreducible non-negative $n\times n$ matrix with spectral radius $1$, and $\bf z$ be a vector of directly Riemann integrable functions on $\mathbb R$ with ${\bf z}(x)={\bf0}$ for $x<0$.
Then for $T>0$, ${\bf f}(x)=\sum_{k=0}^\infty A^k{\bf z}(x-kT)$ is a bounded solution of the equation
\begin{equation*}
{\bf f}(x)=A{\bf f}(x-T)+{\bf z}(x),\quad x\geq0,
\end{equation*}
and it is unique in the class of bounded functions that vanish on $(-\infty,0)$.
Moreover, $\bf f$ satisfies
\begin{equation*}
\lim_{x\rightarrow+\infty}\left(\left(
                                                                           \begin{array}{c}
                                                                             f_1(x+t_{11}T) \\
                                                                             \ldots\\
                                                                             f_n(x+t_{n1}T) \\
                                                                           \end{array}
                                                                         \right)
-\varrho B\sum_{k\in\mathbb Z}\left(
                                                                           \begin{array}{c}
                                                                            z_1(x+t_{11}T+k\varrho T) \\
                                                                             \ldots\\
                                                                            z_n(x+t_{n1}T+k\varrho T) \\
                                                                           \end{array}
                                                                         \right)\right)=0,
\end{equation*}
where $B=\frac1{T{\bf v}^T{\bf u}}{\bf u}{\bf v}^T$.
\end{corollary}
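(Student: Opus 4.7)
The plan is to realize the shift equation ${\bf f}(x)=A{\bf f}(x-T)+{\bf z}(x)$ as an instance of the measure-theoretic renewal equation of Theorem \ref{thmLWC2}. I would set $U_{ij}:=a_{ij}\delta_T$, where $a_{ij}$ is the $(i,j)$-entry of $A$ and $\delta_T$ is the Dirac mass at $T$. Every nonzero entry of $U$ then has support $\{T\}$, hence is non-degenerate at $0$ and vanishes on $(-\infty,0)$; and $U(+\infty)=A$ is irreducible with spectral radius $1$ by hypothesis, so the structural assumptions of Theorem \ref{thmLWC2} are met. A direct computation gives
\begin{equation*}
(U*{\bf g})_i(x)=\sum_j a_{ij}g_j(x-T)=(A{\bf g}(x-T))_i,
\end{equation*}
so the equation in the corollary is exactly the renewal equation ${\bf f}={\bf z}+U*{\bf f}$. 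Iterating, $U^{*k}_{ij}=(A^k)_{ij}\delta_{kT}$, whence $W*{\bf z}(x)=\sum_{k\ge 0}A^k{\bf z}(x-kT)$, the claimed closed form for ${\bf f}$. Boundedness follows from $\sup_k\|A^k\|<\infty$ (a Perron-Frobenius consequence for an irreducible non-negative matrix of spectral radius one, for which all peripheral eigenvalues are simple) combined with the direct Riemann integrability of ${\bf z}$, which makes $\sum_k\|{\bf z}(x-kT)\|$ uniformly bounded in $x$. Uniqueness in the class of bounded functions vanishing on $(-\infty,0)$ is standard: any difference ${\bf g}$ of two such solutions satisfies ${\bf g}(x)=A^k{\bf g}(x-kT)$, and the right-hand side vanishes once $kT>x$.

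Next I would compute the lattice group $\mathcal{G}_U$ and the constant $B$. For any simple cycle $\eta$ of length $l$ on $\{1,\ldots,n\}$, $U_\eta$ is either zero or a positive multiple of $\delta_{lT}$; hence $\mathcal{G}_U=T\cdot\langle L\rangle$, where $L\subset\mathbb{N}$ is the set of simple cycle lengths of $A$. By the classical identification of the period of an irreducible non-negative matrix with the greatest common divisor of its simple cycle lengths, $\langle L\rangle=\varrho\mathbb{Z}$, so the lattice generator of $\mathcal{G}_U$ equals $\varrho T$. The first-moment matrix is $M=TA$, giving $\alpha={\bf v}^T M{\bf u}=T{\bf v}^T{\bf u}$ (using $A{\bf u}={\bf u}$) and hence $B=\tfrac{1}{T{\bf v}^T{\bf u}}{\bf u}{\bf v}^T$, as announced in the statement.

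Finally, since the shortest path from $i$ to $1$ in the digraph of $A$ has length $t_{i1}$, the point $t_{i1}T$ lies in $\mathrm{supp}\,U_{\eta(i,1)}$, so is a valid choice for the parameter $a_{i1}$ in formula \eqref{eqeqconv}. Substituting $\varrho T$ for the lattice generator and $a_{i1}=t_{i1}T$ into the lattice case of Theorem \ref{thmLWC2} yields the limit in the corollary. The only substantive obstacle is the graph-theoretic step of identifying $\mathcal{G}_U$ with $\langle\varrho T\rangle$ through the period/cycle-length equivalence; the remainder of the argument is a bookkeeping exercise in translating the measure-theoretic conclusion of Theorem \ref{thmLWC2} into the discrete-shift language of the corollary.
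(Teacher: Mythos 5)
Your proposal is correct and follows exactly the route the paper takes: the paper's own proof is the one-line remark ``by letting $U=\delta_T A$ and noting that $M=TA$ in Theorems \ref{thmLWC1} and \ref{thmLWC2}, the assertion is immediate,'' and your write-up simply fills in the details of that reduction (identification of $W*{\bf z}$ with $\sum_k A^k{\bf z}(\cdot-kT)$, of $\mathcal G_U$ with $\varrho T\mathbb Z$ via the cycle-length characterization of the period, and of the admissible shifts $a_{i1}=t_{i1}T$). No gaps.
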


\begin{proof}
By letting $U={\delta_T}A$ and noting that $M=TA$  in Theorems \ref{thmLWC1}, \ref{thmLWC2}, the assertion is immediate.
\end{proof}

%
%
%
%

%

The following theorem is an extension of Theorem \ref{thmLWC2} from irreducible case to general case, which is
 due to Lau-Wang-Chu \cite[Theorem 4.5]{LWC} and Hambly-Nyberg \cite[Theorem 2.6]{HN}.
\begin{theorem}\label{thmHN}[Lau-Wang-Chu, Hambly-Nyberg]
Suppose $U$ is a matrix of finite Radon measures defined on $\mathbb R$ such that each non-zero entry is non-degenerate at $0$ and vanishes on $(-\infty,0)$. Also, suppose $U(+\infty)$  has maximal eigenvalue $1$, and each row has at least one non-zero entry. Assume $\int_0^{+\infty}xdU_{ij}(x)<+\infty$ for all $i,j$.  Let $W=\sum_{k=0}^{+\infty} U^{*k}$.  Let $\bf z$ be a vector of directly Riemann integrable functions on $\mathbb R$ with ${\bf z}(x)={\bf0}$ for $x<0$.
If $\bf f$ is bounded on finite intervals, vanishes on $(-\infty,0)$, and satisfies the renewal equation
\begin{equation*}
{\bf f}(x)={\bf z}(x)+U*{\bf f}(x),\quad x\geq0,
\end{equation*}
then ${\bf f}(x)=W*{\bf z}(x)$ and the components $f_i$ satisfy:

(1). if $j\rightarrow \mathcal S$, then
\begin{equation*}
\lim_{x\rightarrow+\infty}(x^{-{m_j}}f_j(x)-p_j(x))=0,
\end{equation*}
where $p_j$ is either a $\varrho_j$-periodic function or a constant  depending on whether $\mathcal G_U$ is lattice or not, and $m_j$, $\varrho_j$ are defined as in \eqref{defmj}, \eqref{defrhoj};

(2). if $j\nrightarrow \mathcal S$, then
\begin{equation*}
\lim_{x\rightarrow+\infty}f_j(x)=0.
\end{equation*}
\end{theorem}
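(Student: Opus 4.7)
The plan is to reduce the reducible case to repeated applications of the irreducible Theorem \ref{thmLWC2} by exploiting a topological ordering of the communicating classes. First I would partition $\{1,\ldots,n\}$ into communicating classes $J_1,\ldots,J_s$ and relabel them so that $J_\alpha\to J_\beta$ with $J_\alpha\neq J_\beta$ forces $\alpha<\beta$. After conjugation by the associated permutation matrix, both $U$ and $A=U(+\infty)$ become block upper-triangular with diagonal blocks $U_{JJ}$ (each either an irreducible block or a single trivial vertex). The renewal equation then decouples into the system
\begin{equation*}
{\bf f}_J={\bf z}_J+U_{JJ}*{\bf f}_J+\sum_{K\succ J}U_{JK}*{\bf f}_K,
\end{equation*}
which can be solved by working from the terminal classes downward.

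For the base case, take a class $J$ from which no other class is accessible, so the equation reduces to ${\bf f}_J=W_J*{\bf z}_J$ with $W_J=\sum_{k\geq0}U_{JJ}^{*k}$. If $J$ is basic, then $U_{JJ}(+\infty)$ is irreducible with spectral radius one, and Theorem \ref{thmLWC2} directly yields either a $\varrho_J$-periodic or a constant limit for ${\bf f}_J$, according to whether $\mathcal G_{U_{JJ}}$ is lattice or dense. If $J$ is non-basic, then $U_{JJ}(+\infty)$ has spectral radius strictly less than one, so $W_J$ is a finite matrix of finite measures, and ${\bf f}_J=W_J*{\bf z}_J\to\mathbf 0$ by the direct Riemann integrability of ${\bf z}_J$ together with the dominated convergence theorem.

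For the inductive step, fix $J$ and assume the result for every $K$ with $J\to K$, $K\neq J$. Rewrite the equation as ${\bf f}_J=\widetilde{\bf z}_J+U_{JJ}*{\bf f}_J$, where $\widetilde{\bf z}_J:={\bf z}_J+\sum_{K\succ J}U_{JK}*{\bf f}_K$. By the induction hypothesis, each $f_K(x)$ equals $x^{m_K}p_K(x)+o(x^{m_K})$ if $K\to\mathcal S$ and $o(1)$ otherwise. Convolving with the finite measure $U_{JK}$ preserves this asymptotic shape: expanding $x^{m_K}=((x-t)+t)^{m_K}$ binomially and using that the moments of $U_{JK}$ are finite shows that $U_{JK}*(x^{m_K}p_K)$ leads with $x^{m_K}\widetilde p_K(x)$ for a suitable translate (lattice) or constant (non-lattice) $\widetilde p_K$, plus lower-order corrections. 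Hence $\widetilde{\bf z}_J$ is of the form polynomial-times-bounded-periodic plus a directly Riemann integrable remainder. One then applies the irreducible renewal theorem to $U_{JJ}$ with this forcing: if $J$ is basic, the cumulative sum $\sum_{k=0}^{\lfloor x/T\rfloor}U_{JJ}^{*k}$ contributes an extra factor of $x$ (coming from the projection onto the unit-modulus spectrum), so the polynomial degree advances from $\max_{K:J\to K}m_K$ to $m_J=\max_{K:J\to K}m_K+1$; if $J$ is non-basic, $W_J$ is summable and the degree is preserved. The period $\varrho_J$ is obtained as the least common multiple of the $\varrho_K$'s, matching the definition in \eqref{defrhoj}.

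The main obstacle is that the inductive forcing $\widetilde{\bf z}_J$ is \emph{not} directly Riemann integrable, so Theorem \ref{thmLWC2} does not apply verbatim in the inductive step. To circumvent this I would split $\widetilde{\bf z}_J=\widetilde{\bf z}_J^{\text{poly}}+\widetilde{\bf z}_J^{\text{dRi}}$ into its leading polynomial-periodic part and a directly Riemann integrable remainder, and solve the two renewal equations separately. The directly Riemann integrable piece is handled by Theorem \ref{thmLWC2} exactly as stated. For the polynomial-periodic piece, one performs an explicit calculation with the partial sums $\sum_{k=0}^{N}U_{JJ}^{*k}$ using the Jordan decomposition of $U_{JJ}(+\infty)$: the component along the unit eigenspace produces the extra $x$-factor characteristic of the basic case, while all other Jordan blocks contribute absolutely summable corrections. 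Carefully bookkeeping the error terms at each step and verifying that the Riemann-integrable remainders remain Riemann integrable after each convolution completes the argument; this bookkeeping is essentially the technical core of \cite{LWC} and \cite{HN}.
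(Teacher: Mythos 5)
The paper does not prove this theorem: it is stated as a quoted result, with the proof delegated entirely to Lau--Wang--Chu \cite[Theorem 4.5]{LWC} and Hambly--Nyberg \cite[Theorem 2.6]{HN} (the only renewal-theoretic argument the paper actually writes out is the short proof of the lattice case \eqref{eqeqconv} of the \emph{irreducible} Theorem \ref{thmLWC2}). So there is no in-paper proof to compare against; what can be said is that your sketch reproduces the strategy of the cited references: put the communicating classes in topological order so that $U$ and $U(+\infty)$ become block upper-triangular, solve the system from the terminal classes upward, use Theorem \ref{thmLWC2} on each irreducible diagonal block, observe that non-basic blocks have summable $W_J$ and hence preserve (or kill) the asymptotic order, and that basic blocks increment the polynomial degree by one --- which is exactly how the height function and hence $m_j$ in \eqref{defmj} are built. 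Your identification of the central obstacle (the inductive forcing $\widetilde{\bf z}_J$ is not directly Riemann integrable, so the irreducible theorem cannot be applied verbatim) is also the right one.

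The genuine gap is that the step you flag as ``the technical core'' is asserted rather than proved, and it is the heart of the matter: you need a renewal lemma for forcing of the form $x^{m}p(x)$ with $p$ bounded periodic (or constant), namely that for a basic block $J$ one has $W_{JJ}*\bigl(x^{m}p(x)\bigr)=\tfrac{1}{m+1}\,B_J\,\widetilde p(x)\,x^{m+1}+o(x^{m+1})$ with $\widetilde p$ again periodic with a period dividing the lcm in \eqref{defrhoj}, together with the verification that the lower-order corrections produced by the binomial expansion and by the discrepancy $W_{JJ}(x)-xB_J$ stay directly Riemann integrable (or at least $o(x^{m+1})$) after each convolution. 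Invoking ``the Jordan decomposition of $U_{JJ}(+\infty)$'' does not by itself deliver this, because the issue is the $x$-dependence of the convolution, not the algebra of the total-mass matrix; one really needs the Blackwell-type increment estimate \eqref{eqdiscretgroup}/\eqref{eqdensegroup} applied to the polynomial forcing, as in \cite{HN}. Two smaller omissions: you do not justify that $W=\sum_k U^{*k}$ is locally finite in the reducible, spectral-radius-one setting (needed even to write ${\bf f}=W*{\bf z}$), nor the uniqueness of the bounded-on-finite-intervals solution, though both are routine. None of your steps is wrong; the proof is simply incomplete at exactly the point where the paper, too, chooses to cite rather than prove.
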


\bibliographystyle{siam}

\bigskip


\end{document}